\numberwithin{equation}{section}
\newcommand{\cA}{{\mathcal A}}
\newcommand{\cB}{{\mathcal B}}
\newcommand{\cF}{{\mathcal F}}
\newcommand{\cL}{{\mathcal L}}
\newcommand{\cM}{{\mathcal M}}
\newcommand{\cO}{{\mathcal O}}
\newcommand{\cS}{{\mathcal S}}
\newcommand{\cU}{{\mathcal U}}
\newcommand{\cW}{{\mathcal W}}
\newcommand{\C}{{\mathbb C}}
\newcommand{\K}{{\mathbb K}}
\newcommand{\bL}{{\mathbb L}}
\newcommand{\PP}{{\mathbb P}}
\newcommand{\bW}{{\mathbb W}}
\newcommand{\bZ}{{\mathbb Z}}
\newcommand{\tY}{\tilde{Y}}
\newcommand{\tM}{\widetilde{\cM}}
\newcommand{\Sym}{\mathrm{Sym}}
\newcommand{\End}{\mathrm{End}\,}
\newcommand{\Ker}{\mathrm{Ker}}
\newcommand{\Aut}{\mathrm{Aut}\,}
\newcommand{\im}{\mathrm{Im}\,}
\newcommand{\Iden}{\mathrm{Id}}
\newcommand{\Hom}{\mathrm{Hom}}
\newcommand{\rank}{\mathrm{rk}\,}
\newcommand{\Pic}{\mathrm{Pic}}
\newcommand{\ev}{\mathrm{ev}}
\newcommand{\sym}{\mathrm{sym}}
\newcommand{\isom}{\xrightarrow{\sim}}
\newcommand{\Oc}{{{\cO}_C}}
\newcommand{\Sec}{\mathrm{Sec}}
\newcommand{\Supp}{\mathrm{Supp}}
\newcommand{\Sing}{\mathrm{Sing}}
\newcommand{\Sp}{\mathrm{Sp}}
\newcommand{\Gr}{\mathrm{Gr}}
\newcommand{\Res}{\mathrm{Res}}
\newcommand{\Sn}{\cS_{2n, K}}
\newcommand{\Snk}{\Sn^k}
\newcommand{\bomega}{\overline{\omega}}
\newcommand{\bphi}{\bar{\phi}}
\newcommand{\be}{\bar{e}}
\newcommand{\bE}{\bar{E}}
\newcommand{\bI}{\bar{I}}
\newcommand{\bJ}{\bar{J}}
\newcommand{\Urd}{\cU (r, d)}
\newcommand{\Utn}{\cU (2n, 2n(g-1))}
\newcommand{\MS}{{\cM}{\cS} ( 2n, K )}
\newcommand{\SG}{SG_{2n, K}}
\newcommand{\SGkW}{SG^k ( \cW )}
\newcommand{\SkW}{\cS^k ( \cW )}
\newcommand{\xij}{x_i x_j}
\newcommand{\bp}{\bar{p}}
\newcommand{\mus}{\mu^{\mathrm{s}}}
\newcommand{\musL}{\mu^{\mathrm{s}}_\Lambda}
\newcommand{\psis}{\psi_{\mathrm{s}}}
\newcommand{\GaL}{G( \alpha; r, L , k )}
\newcommand{\GaK}{G( \alpha; 2n , K^n , k )}
\newtheorem{theorem}{{\textbf Theorem}}[section]
\newtheorem{proposition}[theorem]{{\textbf Proposition}}
\newtheorem{corollary}[theorem]{{\textbf Corollary}}
\newtheorem{lemma}[theorem]{{\textbf Lemma}}
\newtheorem{criterion}[theorem]{{\textbf Criterion}}
\newtheorem{defit}[theorem]{{\textbf Definition}}
\newtheorem{remit}[theorem]{{\textbf Remark}}
\newenvironment{remark}{\begin{remit}\rm}{\end{remit}}
\newenvironment{definition}{\begin{defit}\rm}{\end{defit}}
\newcounter{tmp}
\title{Brill--Noether loci on moduli spaces of symplectic bundles over curves}
\author{Ali Bajravani}
\address[A.\ Bajravani]{Department of Mathematics, Faculty of Basic Sciences, Azarbaijan Shahid Madani University, Tabriz, I.\ R.\ Iran., P.\ O.\ Box: 53751-71379.}
\email{bajravani@azaruniv.ac.ir}
\author{George H.\ Hitching}
\address[G.\ H.\ Hitching]{Oslo Metropolitan University, Postboks 4, St.\ Olavs plass, 0130 Oslo, Norway.}
\email{gehahi@oslomet.no}
\keywords{Brill--Noether locus, symplectic vector bundle, determinantal locus}
\subjclass[2010]{14H60 (14M12)}
\begin{document}

\begin{abstract} The symplectic Brill--Noether locus $\Snk$ associated to a curve $C$ parametrises stable rank $2n$ bundles over $C$ with at least $k$ sections and which carry a nondegenerate skewsymmetric bilinear form with values in the canonical bundle. This is a symmetric determinantal variety whose tangent spaces are defined by a symmetrised Petri map. We obtain upper bounds on the dimensions of various components of $\Snk$. We show the nonemptiness of several $\Snk$, and in most of these cases also the existence of a component which is generically smooth and of the expected dimension. As an application, for certain values of $n$ and $k$ we exhibit components of excess dimension of the standard Brill--Noether locus $B^k_{2n, 2n(g-1)}$ over any curve of genus $g \ge 122$. We obtain similar results for moduli spaces of coherent systems. \end{abstract}

\maketitle

\section{Introduction}

Let $C$ be a projective smooth curve of genus $g \ge 2$ and $\Urd$ the moduli space of stable vector bundles of rank $r$ and degree $d$ over $C$. A fundamental attribute of $\Urd$ is the stratification by \textsl{generalised Brill--Noether loci}
\[ B^k_{r, d} \ := \ \{ W \in \Urd : h^0 ( C, W ) \ge k \} . \]
This is a determinantal variety of expected dimension
\[ \beta^k_{r,d} \ := \ r^2 ( g - 1 ) + 1 - k ( k - d + r(g-1) ) . \]
Moreover, $B^{k+1}_{r, d} \subseteq \Sing(B^k_{r,d})$. If $r = 1$, one obtains the classical Brill--Noether loci on $\Pic^d (C)$, which are traditionally denoted $W^{k-1}_d (C)$. For a generic curve, the $B^k_{1, d}$ behave as regularly as possible: $B^k_{1, d}$ is nonempty of dimension $\beta^k_{1, d}$ if and only if $\beta^k_{1,d} \ge 0$, and furthermore irreducible if this dimension is positive; and $\Sing ( B^k_{1, d} ) = B^{k+1}_{1, d}$. See \cite{ACGH} for a full account of this story.

For $r \ge 2$, the situation is more complicated, even for a general curve. In recent years, much attention has been given to determining the components of $B^k_{r, d}$ for $r \ge 2$, together with their dimensions and singular loci. See \cite{GTiB} for a survey. Several generalisations have been studied, including coherent systems (see for example \cite{BGPMN} and \cite{New11}), generalised theta divisors (see \cite{Bea06} for an overview) and more generally twisted Brill--Noether loci (see \cite{TiB14} and \cite{HHN}).

A variant of $B^k_{r, d}$ which is of particular relevance for the present work is the \textsl{fixed determinant Brill--Noether locus}
\[ B^k_{r, L} \ := \ \{ W \in \Urd : h^0 ( C, W ) \ge k \hbox{ and } \det ( W ) = L \} \]
where $L$ is a fixed line bundle of degree $d$. Denote by $K$ the canonical bundle $T^*_C$. The locus $B^k_{2, K}$ has been studied extensively in \cite{BF}, \cite{Muk92}, \cite{Muk97}, \cite{TiB04}, \cite{TiB07}, \cite{LNP} and \cite{Baj19} (see also \cite{GN}), and we shall return to this below. The loci $B^k_{r, L}$ for other values of $r$ and $L$ are studied in \cite{Oss}, \cite{Oss-2}, \cite{LNS}, \cite{Zha} and elsewhere.

In the present work, we consider a different generalisation of $B^k_{2, K}$ to higher rank. For any bundle $W$ of rank two, there is a natural skewsymmetric isomorphism $W \isom W^* \otimes \det (W)$. In general, recall that a vector bundle $W$ is said to be \textsl{$L$-valued symplectic} if there is a skewsymmetric isomorphism $W \isom W^* \otimes L$ for some line bundle $L$; equivalently, if there is a nondegenerate skewsymmetric bilinear form $\omega \colon W \otimes W \to L$. By nondegeneracy, a symplectic bundle must have even rank $2n \ge 2$, and moreover $\det(W) = L^n$. For us, $L$ will always be $K$. There is a quasiprojective moduli space $\cM\cS ( 2n, K )$ for stable $K$-valued symplectic bundles over $C$, which we discuss in more detail in {\S} \ref{background}. Our fundamental objects of study are the \textsl{symplectic Brill--Noether loci}
\[ \Snk \ := \ \{ W \in \MS : h^0 ( C, W ) \ge k \} \ \subseteq \ B^k_{2n, K^n} . \]

It follows from \cite[Remark 4.6]{Muk92} that $\Snk$ is a \emph{symmetric} determinantal variety of expected codimension $\frac{1}{2}k(k+1)$. In {\S} \ref{SchemeStructure}, we expand upon this remark, showing that $\Snk$ is \'etale locally defined by the vanishing of the $(k+1) \times (k+1)$-minors of a symmetric matrix. 
 In {\S} \ref{TangentSpaces} we recall a description of the Zariski tangent spaces of $\Snk$ in terms of a symmetrised Petri map. Adapting well-known results from \cite{ACGH} to the symplectic case, in {\S\S} \ref{sectionDeSing}--\ref{TangentCones} we construct a partial desingularisation of $\Snk$ near a well-behaved singular point $W$ and describe the tangent cone $C_W \Snk$.

For $2n = 2$, the $K$-valued symplectic bundles are precisely those of canonical determinant and, as outlined above, $\cS_{2, K}^k = B^k_{2, K}$ has been much studied. Our next objective is to answer some of the basic questions of nonemptiness, dimension and smoothness of $\Snk$ for $2n \ge 4$. In {\S} \ref{DimBounds}, we prove the following dimension bounds on various components of $\Snk$, generalising \cite[Theorem 3.4]{Baj19} of the first author.

\begingroup
\setcounter{tmp}{\value{theorem}}
\setcounter{theorem}{0}
\renewcommand\thetheorem{\Alph{theorem}}

\begin{theorem} \label{MainA} Let $C$ be any curve of genus $g \ge 2$.
\begin{enumerate}
\item[(a)] (Theorem \ref{FirstTypeBound}) Let $X$ be a closed irreducible sublocus of $\Snk$ of which a general element $W$ satisfies $H^0 ( C, W ) = H^0 ( C, L_W )$ where $L_W \subset W$ is a line subbundle of degree $d$. Then for each $W \in X$, we have
\[ \dim X \ \le \ \dim T_W X \ \le \ \dim \left( T_{L_W} B^k_{1, d} \right) + n(2n + 1)(g - 1) - 2nd - 1 . \]
\item[(b)] (Theorem \ref{SecondTypeBound}) Let $k$ be an integer satisfying $1 \le k \le n(g+1) - 1$. Suppose $Y$ is an irreducible component of $\Snk$ containing a bundle $W$ satisfying $h^0 ( C, W ) = k$ and such that the rank of the subbundle of $W$ generated by global sections is $r$. Then
\[ \dim Y \ \le \ \dim T_W Y \ \le \ \min \left\{ n(2n+1)(g-1) - ( 2k - 1 ) , n(2n+1)(g-1) - k - \frac{1}{2} r(r-1) \right\} . \]
\end{enumerate} \end{theorem}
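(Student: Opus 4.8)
The plan is to reduce both parts to the tangent space description recalled in \S\ref{TangentSpaces}. Write $\mus_W$ for the symmetrised Petri map $\Sym^2 H^0(C,W) \to H^0(C,\Sym^2 W)$, whose cokernel is Serre-dual to $T_W \Snk$. Since $\dim \MS = n(2n+1)(g-1)$ and $h^0(C,W) = k$, this gives
\[ \dim T_W \Snk \ = \ n(2n+1)(g-1) - \rank (\mus_W) . \]
Because $Y \subseteq \Snk$ we always have $\dim Y \le \dim T_W Y \le \dim T_W \Snk$, so part (b) reduces to the two lower bounds $\rank(\mus_W) \ge 2k-1$ and $\rank(\mus_W) \ge k + \tfrac{1}{2}r(r-1)$. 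Part (a) will instead exploit the geometry of the line subbundle $L_W$ directly.

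For the bound $\rank(\mus_W) \ge k + \tfrac{1}{2}r(r-1)$ I would fix a general point $p \in C$, at which evaluation $\ev_p \colon H^0(C,W) \to W_p$ has image the rank-$r$ fibre $E_p$ of the subbundle generated by the sections, with kernel of dimension $k-r$. Composing $\mus_W$ with $\ev_p$ gives $\Sym^2(\ev_p)$, whose image is $\Sym^2 E_p$ of dimension $\binom{r+1}{2}$; and for $s_1$ with $s_1(p) \neq 0$ the elements $s_1 \odot t$ with $t(p)=0$ lie in $\ker(\ev_p) \cap \im(\mus_W)$ and number $k-r$, since $t \mapsto s_1 \odot t$ is injective. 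Rank-nullity for $\ev_p$ restricted to $\im(\mus_W)$ then yields $\rank(\mus_W) \ge \binom{r+1}{2} + (k-r) = k + \tfrac{1}{2}r(r-1)$. For $\rank(\mus_W) \ge 2k-1$ I would use that $t \mapsto s_1 \odot t$ is injective for any nonzero $s_1$ (pointwise, $v \odot w = 0$ forces $v=0$ or $w=0$), giving $k$ independent products $s_1 \odot s_j$, and then adjoin the $k-1$ products $s_2 \odot s_j$ $(2 \le j \le k)$ for a second section and show the two families are independent. A linear relation forces, at a general point, a section of $\langle s_2,\dots,s_k\rangle$ to be pointwise proportional to $s_1$; this is excluded by choosing $s_1$ to generate a line subbundle with a single section, or, when all sections lie in one line subbundle $M$, by invoking the classical bound $\dim(H^0(M)\cdot H^0(M)) \ge 2h^0(M)-1$ for the multiplication map.

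For part (a) I would study the rational map $\pi \colon X \dashrightarrow B^k_{1,d}$, $W \mapsto L_W$, sending a general $W$ to the degree-$d$ line subbundle carrying all of its sections, so that $h^0(C,L_W) = h^0(C,W) \ge k$. Its differential maps $T_W X$ into $T_{L_W} B^k_{1,d}$, whence $\dim T_W X \le \dim T_{L_W} B^k_{1,d} + \dim \ker(d\pi_W)$, with $\ker(d\pi_W)$ consisting of the infinitesimal symplectic deformations of $W$ fixing $L_W$. Since $L = L_W$ is automatically isotropic, one has $L \subseteq L^\perp$ with $\overline{W} := L^\perp / L$ a $K$-valued symplectic bundle of rank $2n-2$, and $W$ is reconstructed from $\overline{W}$ by a form-compatible iterated extension through the filtration $L \subseteq L^\perp \subseteq W$, where $W/L^\perp \cong L^* \otimes K$. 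A dimension count of this symplectic reduction should give fibre dimension $n(2n+1)(g-1) - 2nd - 1$, which reduces for $2n=2$ to the extension count $h^1(L^{\otimes 2} \otimes K^{-1}) - 1 = 3g - 4 - 2d$ underlying \cite{Baj19}.

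I expect the main obstacle to be this fibre computation in part (a): one must identify precisely the space of $K$-valued symplectic structures on the extension built from $L$ and $\overline{W}$, and verify that the isotropy and self-duality constraints cut the naive extension space down to exactly $n(2n+1)(g-1) - 2nd - 1$ dimensions. A secondary technical point is the proportionality issue in the $2k-1$ bound, where a possible second section proportional to $s_1$ must be ruled out by a judicious choice of $s_1$ or by reduction to the line-bundle multiplication bound.
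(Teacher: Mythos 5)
Your part (b) is correct in substance, but it takes a genuinely different route from the paper's. The paper (Theorem \ref{SecondTypeBound}) fixes a second-type presentation $0 \to \Oc(D) \to W \to F \to 0$ with $h^0(\Oc(D)) = 1$ and transfers the problem to the quotient bundle: two snake-lemma diagrams reduce $\dim \Ker (\mus)$ to $\dim \Ker (\mu_3)$ for the restricted Petri map $\mu_3 \colon \Sym^2 V \to H^0(\Sym^2 F)$, and Lemma \ref{SymPetriBound} then gives $\dim \im (\mu_3) \ge \max\left\{ \tfrac{1}{2}(r-1)r , k-1 \right\}$. You instead bound $\rank(\mus)$ directly on $W$: evaluation at a general point $p$ gives $\rank(\mus) \ge \tfrac{1}{2}r(r+1) + (k-r)$, and the $2k-1$ bound comes from exhibiting $2k-1$ independent products $s_1 \odot s_j$, $s_2 \odot s_j$. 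Both counts check out: the injectivity of $t \mapsto \mus(s_1 \odot t)$ and the pointwise analysis of a relation $s_1 \odot u + s_2 \odot v = 0$ (which forces $v$ to lie in the line subbundle generated by $s_1$) are exactly the kind of fibrewise symmetric-tensor arguments the paper uses inside Lemma \ref{SymPetriBound} and Lemma \ref{jstar}. Note only that your case split for the $2k-1$ bound --- ``some section generates a line subbundle with a single section'' versus ``all sections lie in one line subbundle'' --- is precisely the first-type/second-type dichotomy, and its exhaustiveness is Feinberg's result, Corollary \ref{FeinbergCorollary}; you should cite it. Your route is more elementary (no exact-sequence reductions), applies verbatim to any $W$ with $h^0(W)=k$ rather than only to second-type bundles, and recovers the same two bounds; what the paper's reduction buys is the reusable quotient-bundle Lemma \ref{SymPetriBound}.

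Part (a), however, has a genuine gap. You correctly set up the fibration $\pi \colon X \dashrightarrow B^k_{1,d}$ and reduce to bounding $\ker(d\pi_W)$, the first-order symplectic deformations of $W$ fixing $L_W$; but the claim that this kernel has dimension at most $n(2n+1)(g-1) - 2nd - 1$ is exactly the heart of the theorem, and you do not prove it --- you explicitly defer it as ``the main obstacle.'' Your proposed symplectic-reduction count (rebuilding $W$ from $L \subseteq L^\perp \subseteq W$ and counting form-compatible extensions) is not obviously workable as stated: the dimensions of the relevant extension spaces involve terms such as $h^1(\Hom(L^\perp/L, L))$ and $h^1(K^{-1}L^2)$, which jump with the corresponding $h^0$'s, so a naive parametrisation does not yield the uniform bound, and one must in any case compare a moduli count with a Zariski tangent space. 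The paper's proof of Theorem \ref{FirstTypeBound} handles this cohomologically: tangent vectors of $X$ lie in $(j^*)^{-1} j_* \left( T_{L} B^k_{1,d} \right)$, where $j^* \colon H^1(K^{-1} \otimes W \otimes W) \to H^1(L^{-1} \otimes W)$ is induced by the inclusion $j \colon L \to W$ composed with $\omega$; Lemma \ref{jstar} shows that $j^*$ remains surjective on the \emph{symmetric} part $H^1(K^{-1}\otimes \Sym^2 W) \cong T_W \MS$ (sheaf-level surjectivity suffices, since $H^2$ vanishes on a curve --- this is where the symplectic structure enters), so that $\dim \ker \left( j^*|_{T_W \MS} \right) = n(2n+1)(g-1) - h^1(L^{-1}\otimes W)$; and the first-type hypothesis gives $h^0(L^{-1}\otimes W) = 1$, whence $h^1(L^{-1}\otimes W) = 1 + 2nd$ by Riemann--Roch. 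Without an analogue of this surjectivity statement, or some other mechanism controlling the jumping cohomology in your extension count, your sketch of part (a) cannot be completed.
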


In Corollary \ref{FirstTypeCpt}, we deduce some conditions on $g$, $n$ and $k$ for the existence of a \emph{component} $X$ of the form in Theorem \ref{MainA} (a).

In {\S} \ref{Nonemptiness} we construct stable symplectic bundles $W$ with prescribed values of $h^0 ( C, W )$, showing that $\Snk$ is nonempty in several cases. The approach is a combination of techniques from \cite{Mer} and \cite{CH3}: the $W$ we construct are ``almost split'' symplectic extensions $0 \to E \to W \to E^* \otimes K \to 0$ where $E$ and $K \otimes E^*$ are stable and have many sections. In {\S} \ref{Smoothness}, we show that if $C$ is Petri, in some cases $\Snk$ has a component which is smooth and of the expected dimension. To state the results, set
\begin{equation} k_0 \ := \ \max \{ k \ge 0 : \dim B^k_{1, g-1} \ge 1 \} . \label{Defnko} \end{equation}
By Brill--Noether theory, if $C$ is Petri then $k_0 = \left\lfloor \sqrt{g-1} \right\rfloor$, where $\lfloor t \rfloor = \max\{ m \in \bZ : m \le t \}$.

\begin{theorem} \label{MainB} Let $C$ be a curve of genus $g \ge 3$.
\begin{enumerate}
\item[(a)] (Theorem \ref{SnkNonempty}) For $1 \le k \le 2nk_0 - 3$, the locus $\Snk$ is nonempty.
\item[(b)] (Theorem \ref{GenSmoothExpDim}) If $C$ is a general Petri curve, then for $1 \le k \le 2nk_0 - 3$ there is a component of $\Snk$ which is generically smooth and of the expected codimension $\frac{1}{2}k(k+1)$.
\end{enumerate} \end{theorem}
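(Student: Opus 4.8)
The plan is to prove both parts by an explicit construction of $K$-valued symplectic bundles as Lagrangian extensions, in the ``almost split'' spirit of \cite{CH3}. Fix a stable bundle $E$ of rank $n$ whose slope is slightly below $g-1$ and which carries many sections; following \cite{Mer}, I would assemble $E$ from line bundles lying in $B^{k_0}_{1,g-1}$, whose existence and positive dimension are guaranteed by the definition of $k_0$ (and, for the Petri curve of part (b), by classical Brill--Noether theory). Any $W \in \MS$ containing $E$ as a Lagrangian subbundle sits in an exact sequence
\[ 0 \to E \to W \to E^* \otimes K \to 0 \]
whose class $e$ lies in the symmetric summand $H^1 ( C, \Sym^2 E \otimes K^{-1} ) \subseteq \mathrm{Ext}^1 ( E^* \otimes K, E )$, the symmetry being precisely the condition that the induced $K$-valued form on $W$ be skewsymmetric.

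First I would compute $h^0$ from the long exact sequence: writing $\delta = \cup\, e \colon H^0 ( C, E^* \otimes K ) \to H^1 ( C, E )$ for the connecting map, one has
\[ h^0 ( C, W ) \ = \ h^0 ( C, E ) + \dim \Ker \delta . \]
Since $e$ is symmetric, Serre duality $H^1 ( C, E )^* \cong H^0 ( C, E^* \otimes K )$ turns $\delta$ into a symmetric bilinear form $q_e$ on $H^0 ( C, E^* \otimes K )$, so that $h^0(C,W) = h^0(C,E) + h^0(C, E^* \otimes K) - \rank q_e$. The split bundle ($e=0$) attains the maximal value $h^0(C,E) + h^0(C, E^* \otimes K)$, arranged to equal $2nk_0$, but is unstable; the point of the ``almost split'' construction is to take $e \neq 0$ with $q_e$ of the smallest rank still forcing $W$ to be stable. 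I expect a rank-three form to suffice to exclude all isotropic destabilising subsheaves, which is where the $-3$ enters. Tuning $E$ and $\rank q_e$ then realises each value $h^0(C,W)=k$ with $1 \le k \le 2nk_0 - 3$; since $\Sn^{k+1} \subseteq \Snk$, for part (a) it is in fact enough to attain the top value $2nk_0-3$, while stability is verified by bounding slopes of subbundles against $e$.

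For part (b) I would invoke the criterion of {\S} \ref{TangentSpaces}: a bundle $W$ with $h^0(C,W)=k$ is a smooth point of $\Snk$ on a component of the expected codimension $\frac12 k(k+1)$ exactly when the symmetrised Petri map $\mus$ is injective. It therefore suffices to exhibit, for each $k$ in range, one $W$ from the construction with $h^0(C,W)=k$ \emph{exactly} and $\mus$ injective; the smooth locus being open and dense, the component through $W$ is then generically smooth of the expected dimension, and the upper bounds of Theorem \ref{MainA} confirm that no larger-dimensional component can contain it. To test injectivity of $\mus$ I would filter $H^0(C,W)$ by $H^0(C,E)$ and by the lifts of $\Ker q_e \subseteq H^0(C, E^* \otimes K)$, and examine the graded pieces of the multiplication $H^0(C,W) \otimes H^0(C, W^* \otimes K) \to H^0(C, \End W \otimes K)$ under the identification $W^* \otimes K \isom W$ supplied by the form. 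Each graded piece reduces to a cup product of sections of line bundles on $C$, whose injectivity is exactly the Petri condition, which is where the hypothesis that $C$ be a general Petri curve is consumed.

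The main obstacle is the injectivity of $\mus$. The reduction to the base-curve Petri condition is delicate because the symmetrisation interacts with the symplectic form: one must control not the full Petri map but its restriction to the $\omega$-symmetric summand of $H^0(C,W)^{\otimes 2}$, and rule out a kernel arising from the extra relations that symmetry imposes. Moreover keeping $\rank q_e$ small so as to maximise $h^0(C,W)$ pulls against both the stability of $W$ and the injectivity of $\mus$, so the core of the argument is a simultaneous choice of $E$ and $e$ balancing these three constraints, with the delicate case being the extremal $k = 2nk_0 - 3$.
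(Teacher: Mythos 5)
Your overall strategy is genuinely the same as the paper's (a Mercat-type bundle $E$, symplectic extensions classified by $H^1(K^{-1}\otimes\Sym^2 E)$, $h^0$ controlled by the rank of the symmetrised cup product, and Petri injectivity by reduction to line bundles), but there are concrete gaps. First, the arithmetic of the ``$-3$'' is inconsistent. If $E$ is the stable Mercat bundle of slope $g-1-\frac{1}{n}$, then $h^0(E)=nk_0-1$ and $h^0(K\otimes E^*)=nk_0$, so $h^0(W)=2nk_0-1-\rank q_e$ and the extremal value $2nk_0-3$ requires $\rank q_e=2$, not $3$; the only way to arrange $h^0(E)+h^0(K\otimes E^*)=2nk_0$ is to take $E=\bigoplus_i L_i$ of slope exactly $g-1$, in which case $W$ contains a proper subbundle of slope $\mu(W)$ and can never be stable, no matter what rank $q_e$ has. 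The paper's $-3$ decomposes as $-1$ (from the elementary transformation making $E$ stable) plus $-2$ (a rank-two cup product, obtained by taking $\delta(W)$ general on a $2$-secant line to $\psis(\PP E)\subset \PP H^1(K^{-1}\otimes\Sym^2 E)$). Second, stability -- which you defer to ``bounding slopes of subbundles against $e$'' -- is the technical heart and does not follow from rank considerations alone: one needs the orthogonal-complement trick to reduce to subbundles of rank at most $n$, the finiteness of degree-$(g-1)$ line subbundles of $K\otimes E^*$ (Lemma \ref{FinitelyManyLineSubbs}), the secant-variety lifting criterion (Proposition \ref{LiftingCriterion}), and the nondegeneracy of $\psis(\PP E)$, to show that a general point of a general $2$-secant avoids every linear locus over which a destabilising subsheaf lifts.

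For part (b) there are two further gaps. The graded pieces of $\mus$ are not all instances of the Petri condition: the mixed products $H^0(L_i)\otimes H^0(KL_i^{-1})\to H^0(K)$ are, but the diagonal pieces $\Sym^2 H^0(L_i)\to H^0(L_i^2)$ and $\Sym^2 H^0(KL_i^{-1})\to H^0(K^2L_i^{-2})$ are symmetric multiplication maps whose injectivity is supplied by Ballico's theorem for a curve general in moduli -- a genuinely different input, and the reason the paper assumes a \emph{general} Petri curve -- while the cross terms with $i\ne j$ need a separate generality argument. More seriously, your plan to handle every $k\le 2nk_0-3$ by tuning $\rank q_e$ so that $h^0(W)=k$ exactly cannot work for small $k$: any bundle produced by this construction contains $E$, so $h^0(W)\ge h^0(E)=nk_0-1$, and no choice of $e$ realises $k<nk_0-1$. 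The paper instead proves smoothness only at the extremal value and propagates it downward by Lemma \ref{OneImpliesMore}, via the Grassmannian-bundle desingularisation $\SGkW$ of {\S}\ref{sectionDeSing}; some mechanism of this kind is indispensable in your argument and is missing.
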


\noindent We also briefly mention strictly semistable symplectic bundles in Remark \ref{Semistable}.

It should be noted that there are significantly stronger results in the rank two case. For $2n = 2$, the bound in Theorem \ref{MainB} (a) translates into $4 (g-1) \ge (k+3)^2$. For $g \ge 5$, Teixidor \cite{TiB04} showed for $4 (g-1) \ge k^2 - 1$ that $B^k_{2, K}$ is nonempty and has a component of the expected dimension, with a slightly better result for $k$ even. Furthermore, for $k \ge 8$ and $g$ prime, Lange, Newstead and Park \cite{LNP} showed that $B^k_{2, K}$ is nonempty for $4g - 4 \ge k^2 - k$. We certainly expect that the bound in Theorem \ref{MainB} can be improved for $2n \ge 4$.

In {\S} \ref{Superabundance}, we give an application of Theorem \ref{MainB} to standard Brill--Noether loci $B^k_{2n, 2n(g-1)}$. For $r \ge 2$, it was proven in \cite{TiB91} that in many cases $B^k_{r, d}$ has a component which is generically smooth and of the expected dimension. However, even for a generic curve, components of larger dimension can appear. Following \cite{CFK18}, we call such components \textsl{superabundant}. It was noted in \cite[{\S} 9]{New11} and \cite[\S 1]{BF} that $B^k_{2, K} \ = \ \cS_{2, K}^k$ in many cases (precisely; for $g < \frac{k(k - 1)}{2}$) has expected dimension strictly greater than $\beta^k_{2, 2g-2}$, despite the fact that $B^k_{2, K}$ is contained in $B^k_{2, 2g-2}$. For $n \ge 2$ it emerges that the expected dimension of $\Snk$ can also exceed $\beta^k_{2n, 2n(g-1)}$ for certain values of $g$, $n$ and $k$. We show the following.

\begin{theorem} \label{MainC} \quad
\begin{enumerate}
\item[(a)] (Theorem \ref{OversizedBNlocus1}) Suppose $m \ge 7$ and let $C$ be any curve of genus $g = m^2 + 1$. Then for any $n \ge 1$, the locus $\Sn^{2nm - 3}$ is nonempty and has dimension greater than $\beta^{2nm-3}_{2n, 2n(g-1)}$. In particular $B^{2nm-3}_{2n, 2n(g-1)}$ has a superabundant component.
\item[(b)] (Theorem \ref{OversizedBNlocus2}) Fix $n \ge 1$ and let $C$ be any curve of genus $g \ge (4n+7)^2 + 1$. For $k_0$ as defined in \ref{Defnko}, the locus $\Sn^{2nk_0 - 3}$ is nonempty and has dimension greater than $\beta^{2nk_0-3}_{2n, 2n(g-1)}$. In particular, $B^{2nk_0 - 3}_{2n, 2n(g-1)}$ has a superabundant component.
\end{enumerate} \end{theorem}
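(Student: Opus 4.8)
The plan is to derive a lower bound on $\dim \Snk$ from its determinantal structure, combine it with the nonemptiness furnished by Theorem \ref{SnkNonempty}, and compare the result with $\beta^k_{2n, 2n(g-1)}$. Recall that $\dim \MS = n(2n+1)(g-1)$. Because $\Snk$ is a symmetric determinantal variety of expected codimension $\tfrac{1}{2}k(k+1)$ (see {\S} \ref{SchemeStructure}), every nonempty component has codimension at most this value, so each component $Z$ of $\Snk$ satisfies
\[ \dim Z \ \ge \ n(2n+1)(g-1) - \tfrac{1}{2}k(k+1) . \]
To feed this into $B^k_{2n, 2n(g-1)}$, I would use the forgetful morphism $\pi \colon \MS \to \SU ( 2n, K^n ) \subseteq \Utn$. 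For the stable bundles underlying points of $\Snk$ the $K$-valued symplectic form is unique up to a scalar, and $(W, \omega) \cong (W, \lambda \omega)$ via a scalar automorphism, so $\pi$ is injective on $\Snk$. Hence $\overline{\pi(Z)}$ is an irreducible subvariety of $B^k_{2n, K^n} \subseteq B^k_{2n, 2n(g-1)}$ with $\dim \overline{\pi(Z)} = \dim Z$, and any component of $B^k_{2n, 2n(g-1)}$ containing it is superabundant once $\dim Z > \beta^k_{2n, 2n(g-1)}$.

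Next I would make the superabundance explicit. Substituting $r = 2n$, $d = 2n(g-1)$ into the formula for $\beta^k_{r,d}$ gives $\beta^k_{2n, 2n(g-1)} = 4n^2(g-1) + 1 - k^2$, so the inequality $n(2n+1)(g-1) - \tfrac{1}{2}k(k+1) > 4n^2(g-1) + 1 - k^2$ rearranges to the single condition
\[ \tfrac{1}{2}k(k-1) \ > \ n(2n-1)(g-1) + 1 . \]
Everything then reduces to verifying this for the two parameter ranges.

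For part (a), with $g - 1 = m^2$ and $k = 2nm - 3$ a direct expansion gives
\[ \tfrac{1}{2}k(k-1) - \bigl( n(2n-1)m^2 + 1 \bigr) \ = \ nm(m - 7) + 5 , \]
which is positive as soon as $m \ge 7$; this is exactly where the hypothesis enters. Nonemptiness follows because the existence theorem for classical Brill--Noether loci (\cite{ACGH}) gives $\dim B^m_{1, g-1} \ge g - m^2 = 1$ for \emph{every} curve of genus $m^2 + 1$, whence $k_0 \ge m$ and $k = 2nm - 3 \le 2nk_0 - 3$, so Theorem \ref{SnkNonempty} applies. For part (b), set $s := \lfloor \sqrt{g-1} \rfloor$, so that $k_0 \ge s$ again by the existence theorem; since $\tfrac{1}{2}k(k-1)$ with $k = 2nk_0 - 3$ is increasing in $k_0$ while the right-hand side depends only on $g$ and $n$, it suffices to check the inequality at $k = 2ns - 3$. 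Bounding $g - 1 \le s^2 + 2s$, one finds that the left side minus the right side is at least $ns\bigl( s - (4n+5) \bigr) + 5$, which is positive whenever $s \ge 4n + 5$; the hypothesis $g \ge (4n+7)^2 + 1$ gives $s \ge 4n + 7$, establishing superabundance, while nonemptiness is immediate from Theorem \ref{SnkNonempty} since $k = 2nk_0 - 3$ is the top of its admissible range.

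The routine part here is the arithmetic; the substance lies in the two structural inputs imported from the earlier sections. The more delicate of these is the determinantal lower bound, which guarantees that each component of $\Snk$ already has dimension at least $n(2n+1)(g-1) - \tfrac{1}{2}k(k+1)$ — and it is precisely the gap between this guaranteed dimension and the smaller expected dimension $\beta^k_{2n, 2n(g-1)}$ of the ambient standard Brill--Noether locus that produces the superabundant components. The only other point requiring care is that $\pi$ does not collapse dimension, which the uniqueness of the symplectic form on a stable bundle ensures.
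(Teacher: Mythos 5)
Your proposal is correct and follows essentially the same route as the paper's proof: both reduce superabundance to the single inequality $\tfrac{1}{2}k(k-1) > n(2n-1)(g-1) + 1$ obtained by comparing $\dim \MS - \tfrac{1}{2}k(k+1)$ with $\beta^k_{2n,2n(g-1)}$, both get nonemptiness and the dimension lower bound from Theorem \ref{SnkNonempty} together with the codimension bound of Proposition \ref{SchemeStructureSkW}(b) (the inclusion $\Snk \subseteq B^k_{2n,2n(g-1)}$ being dimension-preserving since $\MS$ embeds in $\Utn$ by Lemma \ref{moduli}), and both then verify the inequality arithmetically in the two parameter ranges. In fact your write-up is slightly more careful than the paper's on two points: you justify $k_0 \ge \lfloor \sqrt{g-1} \rfloor$ on an \emph{arbitrary} curve via the classical existence theorem and use monotonicity of $\tfrac{1}{2}k(k-1)$ in $k$ to reduce to $k = 2n\lfloor\sqrt{g-1}\rfloor - 3$, whereas the paper's proofs simply assert $k_0 = \lfloor\sqrt{g-1}\rfloor$ (which, for the definition (\ref{Defnko}), is guaranteed only on Petri curves); and your bound $s \ge 4n+5$ in part (b), obtained by using $g-1 \le s^2 + 2s$ in both occurrences of $g-1$, is sharper than the paper's $4n+7$, though both are covered by the hypothesis $g \ge (4n+7)^2 + 1$.
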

\endgroup

\noindent In {\S} \ref{SuperCohSys}, we also obtain similar results for certain moduli spaces of coherent systems, both with and without fixed determinant.

We note that Teixidor \cite{TiB04} also obtains superabundant components of $B^k_{2, K} = \cS^k_{2, K}$ for certain values of $k$.

Since Theorem \ref{MainC} (b) applies to all curves of genus $g \ge 122$, it gives a systematic way of finding ordinary determinantal varieties of dimension strictly greater than expected, in some ways akin to \cite[Proposition 9.1]{HHN}. We hope that this aspect of the present work may also be of interest outside the context of Brill--Noether theory.

The construction of the locus $\Snk$ is easily adapted for \emph{$K$-valued orthogonal} bundles; that is, bundles admitting a \emph{symmetric} $K$-valued bilinear form (see \cite{Mum}). However, our methods when applied to orthogonal bundles did not yield superabundant components of any $B^k_{r,d}$; and the argument of Theorem \ref{MainB} (b) also fails for orthogonal bundles. Therefore we have restricted our attention for the present to the symplectic case, with the intention of further studying orthogonal Brill--Noether loci in the future.

\subsection*{Acknowledgements}  We would like to thank Peter Newstead for helpful comments and for making us aware of several references.

\subsection*{Notation} Throughout, $C$ denotes a smooth projective curve of genus $g \ge 2$ over an algebraically closed field $\K$ of characteristic zero. For a sheaf $F$ over $C$, we abbreviate $H^i ( C, F )$, $h^i ( C, F )$ and $\chi ( C, F )$ to $H^i ( F )$, $h^i ( F )$ and $\chi ( F )$ respectively. If $A \times B$ is a product, we denote the projections by $\pi_A$ and $\pi_B$.

\section{Symplectic Brill--Noether loci}

\subsection{Moduli of \textit{K}-valued symplectic bundles} \label{background}
Let $W$ be a $K$-valued symplectic bundle of rank $2n$ over $C$. 
 By \cite[{\S} 2]{BG}, we have $\det (W) = K^n$. If $\kappa$ is a theta characteristic, then $V := W \otimes \kappa^{-1}$ is $\Oc$-valued symplectic. Thus $V$ is the associated vector bundle of a principal $\Sp_{2n}$-bundle $P$ over $C$. By a similar argument to that in \cite[{\S} 4]{Ram} (carried out in \cite{Hit0}), the vector bundle $V$ is stable if and only if $P$ is a \textsl{regularly stable} principal $\Sp_{2n}$-bundle; that is, stable and satisfying $\Aut (P) = Z(\Sp_{2n}) = \bZ_2$.

By \cite{Rthn}, there is a moduli space $\cM ( \Sp_{2n} )$ for stable principal $\Sp_{2n}$-bundles, which is an irreducible quasiprojective variety of dimension $n(2n+1)(g-1)$, and smooth at all regularly stable points. Moreover, it follows from \cite[Proposition 2.6 and Theorem 3.2]{Ser} that the natural map $\cM ( \Sp_{2n} ) \dashrightarrow \cU ( 2n, 0 )$ is an embedding. Translating by $\kappa$, we conclude:

\begin{lemma} The moduli space $\MS$ of stable vector bundles of rank $2n$ with $K$-valued symplectic structure is a smooth irreducible sublocus of $\Utn$, of dimension $n(2n+1)(g-1)$. \label{moduli} \end{lemma}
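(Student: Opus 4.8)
The plan is to transport the known structure of the moduli space $\cM ( \Sp_{2n} )$ of stable principal $\Sp_{2n}$-bundles to $\MS$ through the two identifications recalled just above: the correspondence $W \leftrightarrow V := W \otimes \kappa^{-1}$ between $K$-valued and $\Oc$-valued symplectic bundles, and the associated-bundle construction $P \mapsto V$. First I would record that $\det ( W ) = K^n$ forces $\deg W = n ( 2g - 2 ) = 2n ( g - 1 )$, so that $\MS$ indeed lies inside $\Utn$; moreover tensoring by $\kappa$ gives an isomorphism $\cU ( 2n, 0 ) \isom \Utn$ which preserves stability of the underlying vector bundle and carries $\Oc$-valued symplectic bundles to $K$-valued ones, since for $V \isom V^*$ one has $W := V \otimes \kappa$ satisfying $W \isom V^* \otimes \kappa = W^* \otimes \kappa^2 = W^* \otimes K$. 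It therefore suffices to identify the stable $\Oc$-valued symplectic bundles inside $\cU ( 2n, 0 )$ and establish the three properties there.

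Next I would invoke the equivalence recalled above, that a stable vector bundle $V$ underlying an $\Oc$-valued symplectic bundle corresponds exactly to a \emph{regularly stable} principal $\Sp_{2n}$-bundle $P$, and conversely that $V$ is stable precisely when $P$ is regularly stable. Combined with the embedding $\cM ( \Sp_{2n} ) \dashrightarrow \cU ( 2n, 0 )$ of \cite{Ser}, which is defined and a locally closed immersion exactly on the locus where the associated bundle is stable, this realises the regularly stable locus $\cM ( \Sp_{2n} )^{\mathrm{rs}}$ as a locally closed sublocus of $\cU ( 2n, 0 )$ whose points are the stable $\Oc$-valued symplectic bundles. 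Translating by $\kappa$ then identifies $\MS$ with the $\kappa$-translate of this sublocus.

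Finally I would read off the three assertions from the corresponding properties of $\cM ( \Sp_{2n} )$. For irreducibility, $\cM ( \Sp_{2n} )$ is irreducible by \cite{Rthn}, so its nonempty open regularly stable locus is irreducible, and both the immersion and the translation by $\kappa$ preserve this. For smoothness, $\cM ( \Sp_{2n} )$ is smooth at every regularly stable point, so $\cM ( \Sp_{2n} )^{\mathrm{rs}}$ is smooth, and smoothness passes through the locally closed immersion and the isomorphism by $\kappa$. For the dimension, the regularly stable locus is open and dense in $\cM ( \Sp_{2n} )$ and hence has the same dimension $n(2n+1)(g-1)$, which is preserved throughout.

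I expect the main subtlety to be the precise matching of $\MS$, defined as the sublocus of $\Utn$ of stable bundles admitting a $K$-valued symplectic structure, with the image just constructed. This requires the forgetful map from (bundle-with-form) to (bundle) to be injective on the relevant locus, which is exactly what regular stability provides: since $\Aut ( P ) = Z ( \Sp_{2n} ) = \bZ_2$, the symplectic form on a regularly stable bundle is unique up to scalar, so the underlying bundle determines the symplectic structure and no extra identifications or branching can occur. Granting this, the statement follows formally from the cited results.
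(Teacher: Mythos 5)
Your proposal is correct and follows essentially the same route as the paper: twist by a theta characteristic $\kappa$ to reduce to $\Oc$-valued symplectic bundles, identify stable such bundles with regularly stable principal $\Sp_{2n}$-bundles, and then transport irreducibility, smoothness at regularly stable points, and the dimension $n(2n+1)(g-1)$ from Ramanathan's moduli space $\cM(\Sp_{2n})$ through Serman's embedding $\cM(\Sp_{2n}) \dashrightarrow \cU(2n,0)$ and the translation by $\kappa$. Your closing remark on the injectivity of the forgetful map (the form being unique up to scalar on the regularly stable locus) is a point the paper leaves implicit, and it is a correct and worthwhile addition.
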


Furthermore, we recall a description of the tangent spaces of $\MS$. This can be obtained using Lie theory, but we offer a direct proof. It is well known that first order infinitesimal deformations of a vector bundle $W \to C$ are parametrised by $H^1 ( \End W )$. If $\omega \colon W \to W^* \otimes K$ is a skewsymmetric isomorphism, we have an identification
\begin{equation} \omega_* \colon H^1 ( \End W ) \ \isom \ H^1 ( K \otimes W^* \otimes W^* ) . \label{HOneIdentification} \end{equation}

\begin{lemma} \label{TWMS} Let $(W, \omega)$ be a $K$-valued symplectic bundle. Then the deformations of $W$ preserving the symplectic structure are parametrised by the subspace $H^1 ( K \otimes \Sym^2 W^* ) \subseteq H^1 ( K \otimes W^* \otimes W^* )$. In particular, if $W$ is stable, then $T_W \MS \cong H^1 ( K \otimes \Sym^2 W^* )$. \end{lemma}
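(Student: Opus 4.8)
The plan is to reduce the statement to a fibrewise linear-algebra identity on sheaves, combined with the standard cocycle description of deformations. First I would work out the local picture. The isomorphism $\omega$ induces a sheaf isomorphism $\omega_\bullet \colon \End W \isom K \otimes W^* \otimes W^*$ sending an endomorphism $\phi$ to the $K$-valued bilinear form $b_\phi ( u, v ) := \omega ( \phi(u), v )$; on $H^1$ this is exactly the map $\omega_*$ of \eqref{HOneIdentification}. Since $\omega$ is skewsymmetric, a short computation shows that $b_\phi$ is \emph{symmetric} precisely when $\omega ( \phi(u), v ) + \omega ( u, \phi(v) ) = 0$ for all local sections $u, v$, i.e.\ precisely when $\phi$ lies in the subsheaf
\[ \mathfrak{sp} ( W ) \ := \ \{ \phi \in \End W : \omega ( \phi \cdot, \cdot ) + \omega ( \cdot, \phi \cdot ) = 0 \} \]
of infinitesimal symplectic endomorphisms. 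Thus $\omega_\bullet$ restricts to a sheaf isomorphism $\mathfrak{sp}(W) \isom K \otimes \Sym^2 W^*$ (the splitting $W^* \otimes W^* = \Sym^2 W^* \oplus \wedge^2 W^*$ being available in characteristic zero), and hence to an isomorphism $H^1 ( \mathfrak{sp}(W) ) \isom H^1 ( K \otimes \Sym^2 W^* )$ compatible with $\omega_*$.

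Next I would identify the symplectic deformations with $H^1 ( \mathfrak{sp}(W) )$. The key point is that $( W, \omega )$ is locally trivial: using nondegeneracy of $\omega$ together with a local frame of $K$, a bundle version of the symplectic basis theorem lets us choose a cover $\{ U_i \}$ on which $W$ is trivial and $\omega$ is given by a fixed standard skew form $J_0$ times the frame of $K$. The transition functions $g_{ij}$ then satisfy $g_{ij}^\top J_0 g_{ij} = t_{ij} J_0$, where $t_{ij}$ are the transition functions of $K$. A first-order deformation of the pair is obtained by replacing $g_{ij}$ with $g_{ij} ( \Iden + \epsilon \phi_{ij} )$ while keeping $J_0$ and $t_{ij}$ fixed; imposing that these still preserve the standard form to first order in $\epsilon$ yields exactly $\phi_{ij}^\top J_0 + J_0 \phi_{ij} = 0$, i.e.\ $\phi_{ij} \in \mathfrak{sp}(W) ( U_{ij} )$. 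The identity $g_{ik}^\epsilon = g_{ij}^\epsilon g_{jk}^\epsilon$ makes $\{ \phi_{ij} \}$ a \v{C}ech $1$-cocycle, and symplectic automorphisms of the trivial deformation produce exactly the coboundaries; so first-order deformations of $( W, \omega )$ are classified by $H^1 ( \mathfrak{sp}(W) )$, and the map forgetting $\omega$ is induced by the inclusion $\mathfrak{sp}(W) \hookrightarrow \End W$.

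Combining the two steps, the deformations of $W$ preserving the symplectic structure form the image of $H^1 ( \mathfrak{sp}(W) )$ in $H^1 ( \End W )$, which under $\omega_*$ is the subspace $H^1 ( K \otimes \Sym^2 W^* ) \subseteq H^1 ( K \otimes W^* \otimes W^* )$. When $W$ is stable, Lemma \ref{moduli} tells us $\MS$ is smooth at $W$, so its tangent space is precisely the space of first-order deformations computed above, giving $T_W \MS \cong H^1 ( K \otimes \Sym^2 W^* )$.

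I expect the main obstacle to be the second step, namely making the deformation-theoretic bookkeeping rigorous: justifying local triviality of the pair, and checking that the correspondence between first-order deformations of $( W, \omega )$ and $H^1 ( \mathfrak{sp}(W) )$ is a genuine bijection (in particular that coboundaries match symplectic automorphisms, and that every deformation in which \emph{some} symplectic form survives arises this way, the form itself being allowed to deform). As a consistency check one may verify that at a regularly stable point $H^0 ( \mathfrak{sp}(W) ) = H^0 ( K \otimes \Sym^2 W^* ) = 0$, whence a Riemann--Roch computation gives $h^1 ( K \otimes \Sym^2 W^* ) = n(2n+1)(g-1)$, matching $\dim \MS$ from Lemma \ref{moduli}.
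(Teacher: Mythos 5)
Your proof is correct, but it follows a genuinely different route from the paper's. You classify deformations of the pair $(W,\omega)$ by \v{C}ech cocycles with values in the subsheaf $\mathfrak{sp}(W) \subset \End W$ of infinitesimal symplectic endomorphisms, after normalising local symplectic frames, and then transport this to $H^1(K \otimes \Sym^2 W^*)$ via $\phi \mapsto \omega(\phi(\cdot),\cdot)$; this is essentially the Lie-theoretic argument that the paper explicitly mentions as an alternative and declines to give ($\mathfrak{sp}(W)$ is the adjoint sheaf of the associated $\Sp_{2n}$-bundle, up to the twist by a theta characteristic). The paper instead argues functorially with extension classes: a first-order deformation is an extension $0 \to W \to \bW \to W \to 0$ of class $\delta$, the twisted dual $K \otimes \bW^*$ has class $-\Iden_K \otimes {^t\delta}$, and $\omega$ lifts to a map of extensions $\bW \to K \otimes \bW^*$ if and only if $\omega_*\delta = \omega^*(-\Iden_K \otimes {^t\delta}) = \Iden_K \otimes {^t(\omega_*\delta)}$, i.e.\ if and only if $\omega_*\delta$ is symmetric --- no local trivialisations or cocycle bookkeeping at all. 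What each approach buys: the paper's proof is shorter, coordinate-free, and sidesteps precisely the two issues you flag (local triviality of the pair, and matching coboundaries with symplectic automorphisms); yours gives more structure, exhibiting the symplectic deformation space as $H^1$ of an honest subsheaf and making the link to $\cM(\Sp_{2n})$ and its smoothness at regularly stable points transparent. The obstacles you identify are real but standard: the parametrised symplectic basis theorem over the dual numbers (the map $\psi \mapsto {^t\psi} J_0 + J_0 \psi$ surjects onto skew matrices with kernel $\mathfrak{sp}_{2n}$, so any first-order perturbation of the local form can be gauged away on affine opens) reconciles your notion of deformation, where the form may move, with the paper's notion of liftability of the fixed $\omega$; and your sign bookkeeping is right --- symmetric forms, not skew, correspond to $\mathfrak{sp}(W)$ --- which is the one point where the argument could silently fail. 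Your Riemann--Roch consistency check also agrees with the paper, using $h^0(K \otimes \Sym^2 W^*) = 0$ for stable $W$ (the unique twisted endomorphism $\omega$ being skew).
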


\begin{proof} Let $0 \to W \to \bW \to W \to 0$ be a deformation of $W$, defining a class $\delta \in H^1 ( \End W )$. Consider the diagram
\[ \xymatrix{ 0 \ar[r] & W \ar[r] \ar[d]_\wr^\omega & \bW \ar[r] \ar@{..>}[d] & W \ar[r] \ar[d]_\wr^\omega & 0 \\
 0 \ar[r] & K \otimes W^* \ar[r] & K \otimes \bW^* \ar[r] & K \otimes W^* \ar[r] & 0 } \]
The class of the twisted dual deformation $K \otimes \bW^*$ is $- \Iden_K \otimes {^t \delta}$. The maps patch together to give a deformation of $\omega$ if and only if
\[ \omega^* (- \Iden_K \otimes {^t \delta}) \ = \ \omega_* \delta \ \in \ H^1 (\Hom (W, K \otimes W^*) ) . \]
Now using the fact that $\Iden_K \otimes \omega = - {^t\omega}$, we compute for any $\delta \in H^1 ( \End W )$ that
\[ \omega^* (- \Iden_K \otimes {^t \delta}) \ = \
\Iden_K \otimes {^t ( \omega_* \delta )} . \]
Therefore, $\omega$ can be lifted to the extension $\delta$ if and only if $\omega_* \delta = \Iden_K \otimes {^t ( \omega_* \delta )}$; that is $\omega_* \delta \in H^1 ( K \otimes \Sym^2 W^* )$. \end{proof}

\subsection{The scheme structure of symplectic Brill--Noether loci} \label{SchemeStructure}

As already noted, bundles of rank two and canonical determinant are precisely the $K$-valued symplectic bundles of rank two. We shall see that the construction of $B^k_{2, K} = \cS_{2, K}^k$ in \cite{Muk92} and \cite{Muk97} generalises virtually word for word to higher rank $K$-valued symplectic bundles.

To construct $\Snk$ as a scheme, we require a suitable Poincar\'e bundle equipped with a family of symplectic forms. As $\MS \cong \cM ( \Sp_{2n} )$ and the group $\Sp_{2n}$ is not of adjoint type, by \cite{BBNN} there is no Poincar\'e bundle over $\MS \times C$. The following lemma shows that Poincar\'e bundles do exist over small enough \'etale open subsets of $\MS$.

\begin{lemma} \label{Poincare} There exists an \'etale open covering $\{ U_\alpha \}$ of $\MS$, together with Poincar\'e bundles $\cW_\alpha \to U_\alpha \times C$, each equipped with a family $\omega_\alpha \colon \cW_\alpha \otimes \cW_\alpha \to \pi_C^* K$ of symplectic forms. \end{lemma}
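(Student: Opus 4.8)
The plan is to realise $\MS$ as a geometric quotient carrying a universal symplectic bundle, and then to pull this family back along \'etale-local sections of the quotient map. Since, as noted above, the construction of $B^k_{2, K}$ in \cite{Muk92} and \cite{Muk97} generalises directly, I would fix $m \gg 0$ such that every stable $K$-valued symplectic bundle $W$ of rank $2n$ with $\det W = K^n$ satisfies $H^1 ( W ( m ) ) = 0$ and $W(m)$ globally generated; write $N := \chi ( W(m) )$. Inside the relevant Quot scheme of quotients $\cO^N \to W(m)$ there is a $GL_N$-invariant locally closed subscheme $R$ parametrising those framed quotients for which $W$ is a stable symplectic bundle and $\K^N \isom H^0 ( W(m) )$, and $\MS$ is the geometric quotient of $R$ by $GL_N$. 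The universal quotient on the Quot scheme, twisted by $\pi_C^* \cO(-m)$, restricts to a universal bundle $\cW_R$ on $R \times C$.

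First I would produce the \'etale-local sections. On the stable locus the group $PGL_N = GL_N / \K^*$ acts freely, because the stabiliser in $GL_N$ of a stable bundle is exactly the scalars $\K^*$ (using $\End W = \K$); hence the quotient map $q \colon R \to \MS$ is a principal $PGL_N$-bundle, in particular smooth and surjective, with $\MS$ smooth by Lemma \ref{moduli}. It is a standard fact that a smooth surjective morphism of schemes admits sections \'etale-locally on the target, so there is an \'etale covering $\{ U_\alpha \to \MS \}$ together with sections $s_\alpha \colon U_\alpha \to R$ of $q$. Setting $\cW_\alpha := ( s_\alpha \times \mathrm{id}_C )^* \cW_R$ on $U_\alpha \times C$, the defining property of $\cW_R$ and of a section of $q$ ensures that $\cW_\alpha$ restricts on each slice $\{ u \} \times C$ to the bundle parametrised by $u$; that is, $\cW_\alpha$ is a Poincar\'e bundle.

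It remains to equip each $\cW_\alpha$ with a family of symplectic forms. For a stable symplectic bundle the isomorphism $\omega \colon W \isom W^* \otimes K$ identifies $\Hom ( W, W^* \otimes K )$ with $\End W = \K$, so this space is one-dimensional, spanned by $\omega$; in particular every $K$-valued form on $W$ is a scalar multiple of $\omega$, hence skewsymmetric, and nonzero multiples are nondegenerate. By cohomology and base change the sheaf $\cL_\alpha := \pi_{U_\alpha *} \mathcal{H}om ( \cW_\alpha \otimes \cW_\alpha, \pi_C^* K )$ is therefore a line bundle on $U_\alpha$, whose fibre over $u$ is the one-dimensional space of forms on the corresponding bundle. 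Refining the covering so that each $\cL_\alpha$ is trivial, a nowhere-vanishing section of $\cL_\alpha$ is precisely a family $\omega_\alpha \colon \cW_\alpha \otimes \cW_\alpha \to \pi_C^* K$; it is fibrewise skewsymmetric, and since nondegeneracy is an open condition holding at every point it is nondegenerate throughout. This yields the required data $( \cW_\alpha, \omega_\alpha )$.

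The main obstacle is conceptual rather than computational: no single Poincar\'e bundle exists over $\MS \times C$ because the scalars $\K^*$ act nontrivially on $\cW_R$ along the fibres of $q$ (equivalently, after rigidifying by $\omega$, the centre $\bZ_2 = Z ( \Sp_{2n} )$ acts), so $\cW_R$ fails to descend; this is the obstruction recorded in \cite{BBNN}. The content of the argument is that this obstruction is a gerbe class trivialised by an \'etale base change, which is exactly what the local sections $s_\alpha$ furnish. The only genuinely delicate input is the smoothness and geometric-quotient property of $q$, which I would derive from the smoothness of $\MS$ at regularly stable points (Lemma \ref{moduli}) together with the freeness of the $PGL_N$-action on the stable locus.
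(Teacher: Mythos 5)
Your proposal is correct and follows essentially the same route as the paper: the paper simply cites the existence of an \'etale cover $\tM \to \MS$ carrying a Poincar\'e bundle (the standard fact about $\Utn \supseteq \MS$ which you re-derive via the Quot-scheme construction and \'etale-local sections of the $PGL_N$-torsor), and then, just as you do, uses stability to see that the space of $K$-valued forms on each bundle is one-dimensional, so that the pushforward sheaf of forms is a line bundle whose local trivialising sections give fibrewise-nonzero, hence nondegenerate, families $\omega_\alpha$. The only cosmetic difference is that the paper works directly with the skewsymmetric forms, taking $\cB = ( \pi_{\tM} )_* \left( \pi_C^* K \otimes \wedge^2 \cW^* \right)$, whereas you take all bilinear forms and observe that skewsymmetry is automatic.
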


\begin{proof} As $\MS$ is contained in $\Utn$, there exists an \'etale cover $\tM \to \MS$ together with a Poincar\'e bundle $\cW \to \tM \times C$. By stability, for any $W \in \MS$ we have $h^0 ( K \otimes \wedge^2 W^* ) = 1$. Hence by \cite[Corollary III.12.9]{Har}, the sheaf
\[ \cB \ := \ ( \pi_{\tM} )_* \left( \pi_C^* K \otimes \wedge^2 \cW^* \right) \]
is locally free of rank one over $\tM$. Let $\{ U_\alpha \}$ be an open covering of $\tM$ such that $\cB|_{U_\alpha}$ is trivial for each $\alpha$. Now if $W$ is a stable vector bundle of slope $g-1$, then any nonzero map $W \to W^* \otimes K$ is an isomorphism. Therefore, any generating section $\omega_\alpha$ for $\cB|_{U_\alpha}$ defines a family of symplectic structures on $\cW_\alpha := \cW|_{U_\alpha \times C}$. The lemma follows. \end{proof}

We proceed to study the \emph{symmetric determinantal} structure of $\Snk$.  The following proposition is an obvious generalisation of \cite[Theorem 4.2]{Muk92}, and is essentially contained in \cite[Remark 4.6]{Muk92}. We give the proof, because the construction will be used further in {\S\S} \ref{sectionDeSing}--\ref{TangentCones}. 


\begin{proposition} \label{SchemeStructureSkW} \quad
\begin{enumerate}
\item[(a)] Scheme-theoretically, $\Snk$ is \'etale locally defined by the vanishing of the $(\nu - k + 1) \times (\nu - k + 1)$-minors of a $\nu \times \nu$ symmetric matrix, for some $\nu \ge k$.
\item[(b)] Each component of $\Snk$ is of codimension at most $\frac{1}{2}k(k+1)$.
\item[(c)] The sublocus $\Sn^{k+1}$ is contained in $\Sing ( \Snk )$.
\end{enumerate}
\end{proposition}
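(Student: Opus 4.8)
The plan is to present $\Snk$ étale-locally as a symmetric degeneracy locus, following \cite[Theorem 4.2, Remark 4.6]{Muk92}; the reduction to Mukai's argument is essentially formal, since the only features used are that a stable $K$-valued symplectic bundle $W$ satisfies $W \isom W^* \otimes K$ and $\chi(W) = \deg(K^n) + 2n(1-g) = 0$ (so $h^0(W) = h^1(W)$). First I would fix an étale open $U_\alpha \subseteq \MS$ carrying a Poincaré bundle $\cW_\alpha \to U_\alpha \times C$ with its family $\omega_\alpha \colon \cW_\alpha \isom \cW_\alpha^* \otimes \pi_C^* K$ of symplectic forms, as in Lemma \ref{Poincare}. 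Since $U_\alpha$ parametrises a bounded family, after shrinking I may choose an effective divisor $D$ with $\deg D$ large enough that $H^0(W(-D)) = 0 = H^1(W(D))$ for all $W \in U_\alpha$. Pushing the relative evaluation sequence
\[ 0 \to \cW_\alpha \to \cW_\alpha(\pi_C^* D) \to \cW_\alpha(\pi_C^* D)|_{U_\alpha \times D} \to 0 \]
forward along $\pi_{U_\alpha}$ and applying cohomology and base change (\cite[Corollary III.12.9]{Har}) yields a morphism of vector bundles $\gamma \colon F_0 \to F_1$, with $F_0 := (\pi_{U_\alpha})_* \cW_\alpha(\pi_C^* D)$ and $F_1 := (\pi_{U_\alpha})_* (\cW_\alpha(\pi_C^* D)|_{U_\alpha \times D})$, whose fibre over $W$ is the evaluation $H^0(W(D)) \to H^0(W(D)|_D)$, with kernel $H^0(W)$ and cokernel $H^1(W)$.

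The crux of (a), and the step I expect to be the main obstacle, is to upgrade $\gamma$ to a \emph{symmetric} matrix. The evaluation sequence shows that $[F_0 \xrightarrow{\gamma} F_1]$ represents the perfect complex $R(\pi_{U_\alpha})_* \cW_\alpha$, whose cohomology sheaves have fibres $H^0(W)$ and $H^1(W)$. Relative Grothendieck--Serre duality along $\pi_{U_\alpha}$, with relative dualising sheaf $\pi_C^* K$, combined with $\omega_\alpha \colon \cW_\alpha^* \otimes \pi_C^* K \isom \cW_\alpha$, shows that this complex is self-dual up to a shift; fibrewise this is the isomorphism $H^1(W) \cong H^0(W^* \otimes K)^* \cong H^0(W)^*$. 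Following \cite[Theorem 4.2]{Muk92}, such a self-dual evaluation complex is represented over a small enough étale neighbourhood by a complex $[E \xrightarrow{A} E^*]$ in which $A = {}^t A$ is a symmetric $\nu \times \nu$ matrix (for $\nu = \rank E \ge k$) and $\ker A_W = H^0(W)$. The genuinely new point, to be checked exactly as in \cite[Remark 4.6]{Muk92}, is that the skew-symmetry of $\omega_\alpha$ combines with the sign intrinsic to Serre duality so as to make $A$ symmetric rather than skew-symmetric — this is where the symplectic, as opposed to orthogonal, hypothesis enters, and the verification is insensitive to the rank $2n$. Granting it, $\Snk$ is étale-locally the vanishing locus of the $(\nu - k + 1) \times (\nu - k + 1)$ minors of $A$, which proves (a).

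Part (b) is then immediate from the theory of symmetric determinantal varieties. The locus of symmetric $\nu \times \nu$ matrices of corank $\ge k$ (rank $\le \nu - k$) has codimension $\binom{k+1}{2} = \frac{1}{2}k(k+1)$ in the space of all symmetric matrices, and this is the maximal codimension of any locus cut out by the same minors; since by (a) $\Snk$ is étale-locally defined by the $(\nu-k+1)$-minors of the symmetric $A$, every component has codimension at most $\frac{1}{2}k(k+1)$ (the generic value is not exceeded under pullback; compare the ordinary determinantal case in \cite{ACGH}).

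For (c), recall that $\MS$ is smooth and irreducible of dimension $n(2n+1)(g-1)$ (Lemma \ref{moduli}), and that for $k \ge 1$ the locus $\Snk$ is a proper subvariety, since a general stable $K$-valued symplectic bundle has no sections; hence every component has dimension $< \dim \MS$. Let $W \in \Sn^{k+1}$, so that $\rank A(W) \le \nu - k - 1$ in the local symmetric presentation of (a). Each defining $(\nu - k + 1)$-minor of $A$ has all of its first-order partial derivatives equal, up to sign, to $(\nu - k)$-minors of $A$, and these all vanish at $W$ because $\rank A(W) \le \nu - k - 1$. Thus the differentials at $W$ of all the local equations of $\Snk$ vanish, so $T_W \Snk = T_W \MS$ has dimension $n(2n+1)(g-1) > \dim_W \Snk$. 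Therefore $W$ is a singular point of $\Snk$, giving $\Sn^{k+1} \subseteq \Sing(\Snk)$.
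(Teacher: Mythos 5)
Your scaffolding agrees with the paper's in outline: étale-local Poincaré families with symplectic forms (Lemma \ref{Poincare}), an auxiliary divisor $D$ of large degree killing $H^0(\cW_s \otimes \Oc(-D))$ and $H^1(\cW_s \otimes \Oc(D))$, and the deduction of (b) and (c) from a symmetric local presentation; your cofactor argument for (c) is correct as far as it goes (modulo the standard but unproved fact that a general stable $K$-valued symplectic bundle has no sections, so that $\Snk \subsetneq \MS$ and the dimension comparison closes the argument). However, part (a) has a genuine gap, located exactly at the step you yourself flag as the main obstacle and then grant. From the evaluation complex $[F_0 \xrightarrow{\gamma} F_1]$ representing $R(\pi_{U_\alpha})_*\cW_\alpha$ and its Grothendieck--Serre self-duality, you assert that étale-locally the complex is represented by $[E \xrightarrow{A} E^*]$ with $A = {}^t\!A$ and $\Ker A_W = H^0(W)$. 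This rigidification --- passing from a self-duality in the derived category to an honest chain-level \emph{symmetric} representative --- is the entire mathematical content of the proposition; it is not formal, and it is not what the cited results prove. Mukai's Theorem 4.2 and Remark 4.6 (and the paper's proof, which generalises them) never introduce the evaluation complex or a derived-category duality, so your appeal to them does not discharge the step.

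The mechanism that actually produces the symmetric matrix is a Lagrangian-intersection argument. One endows the bundle $\cF = (\pi_S)_*\bigl(\cW \otimes \pi_C^*\Oc(D) / \cW \otimes \pi_C^*\Oc(-D)\bigr)$ with the nondegenerate pairing $\bomega$ obtained from $\omega$ and residues; then $\cL_1 = (\pi_S)_*\bigl(\cW / \cW \otimes \pi_C^*\Oc(-D)\bigr)$ is Lagrangian because regular differentials have vanishing residues, $\cL_2 = \im \left( (\pi_S)_*(\cW \otimes \pi_C^*\Oc(D)) \to \cF \right)$ is Lagrangian by the residue theorem, and $\cL_1|_s \cap \cL_2|_s \cong H^0(\cW_s)$. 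Locally, a Lagrangian complement of $\cL_1$ transverse to $\cL_2$ gives a splitting $\cF \cong \cL_1 \oplus \cL_1^*$, and the linear-algebra fact that a Lagrangian transverse to $\cL_1^*$ is the graph of a \emph{symmetric} map $\Sigma \colon \cL_1 \to \cL_1^*$ (\cite[Examples 1.5 and 1.7]{Muk97}) yields the matrix, with $\Ker(\Sigma|_s) = \cL_1|_s \cap \cL_2|_s$. Note that the sign question you defer is transparent here, with no Serre-duality bookkeeping: a skew $\omega$ induces a skew (symplectic) $\bomega$, and Lagrangian graphs for a symplectic form are symmetric maps, whereas a symmetric (orthogonal) $\omega$ would induce a symmetric $\bomega$, whose half-dimensional isotropic graphs are \emph{alternating} maps, leading to a Pfaffian rather than symmetric determinantal structure. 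To complete your proof you must either carry out this construction or state and prove a rigidification lemma for symmetrically self-dual two-term complexes; as written, the crucial symmetry of $A$ is assumed, not established.
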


\begin{proof} (a) We begin with a slightly more general situation. Let $\cW \to S \times C$ be a family of bundles of rank $2n$ over $C$, and let $\omega \colon \cW \otimes \cW \to \pi_C^*K$ be a family of $K$-valued symplectic structures on $\cW$. For $k \ge 0$, we define the Brill--Noether locus associated to the family $\cW$ set-theoretically as
\[ \SkW \ := \ \{ s \in S : h^0 ( C, \cW_s ) \ge k \} . \]
Now for any effective divisor $D$ on $C$, the coherent sheaf
\begin{align} \label{diag6}
\cF \ := \ \left( \pi_S \right)_* \left( \frac{\cW \otimes \pi_C^*\Oc(D)}{\cW \otimes \pi_C^*\Oc (-D)} \right) ,
\end{align}
is locally free of rank $4n \cdot \deg(D)$ over $S$. We shall define a symplectic structure on $\cF$. We extend $\omega$ linearly over $\pi_C^*\Oc$ to a symplectic form
\[ \wedge^2 \left( \cW \otimes \pi_C^* \Oc (D) \right) \ \to \ \pi_C^* K(2D) . \]
Now $\omega_s ( \cW_s (-D), \cW_s(D) ) \subseteq K$ for all $s$. Thus, if $t, u$ are elements of $\cF_s = H^0 \left( C, \frac{\cW_s (D)}{\cW_s(-D)} \right)$ and $\Res$ is the residue map, then
\[ \sum_{x \in \Supp(D)} \Res \left( \omega_s ( t_x , u_x) \right) \ =: \ \bomega_s (t, u) \]
is a well-defined element of $H^1 ( K )$. Thus $\omega$ descends to a bilinear map
\[ \bomega \colon \wedge^2 \cF \ \to \ \cO_S \otimes H^1 ( K ) \ = \ \cO_S . \]
Moreover, $\bomega$ is nondegenerate since $\omega$ is.

Let us now assume that $\deg(D)$ is large enough that $h^1 ( C, \cW_s (D) ) = 0$ for all $s \in S$. Then, as $\cW_s \cong \cW^*_s \otimes K$, by Serre duality $h^0 ( C, \cW_s (-D) ) = 0$ for all $s \in S$ also. Thus the subsheaf
\[ \cL_1 \ := \left( \pi_S \right)_* \left( \frac{\cW}{\cW \otimes \pi_C^*\Oc(-D)} \right) \ \subset \ \cF \]
is locally free of rank $2n \cdot \deg(D)$. As the residue of a regular differential is zero, $\cL_1$ is Lagrangian with respect to $\bomega$.

Furthermore, as $h^1 ( \cW_s (D) ) = 0$ for all $s$, the subsheaf
\[ \cL_2 \ := \ \im \left( ( \pi_S )_* \left( \cW \otimes \pi_C^* \Oc ( D ) \right) \ \to \ \cF \right) \ \subset \ \cF \]
is also locally free of rank $2n \cdot \deg (D)$. By the residue theorem \cite[III.7.14.2]{Har}, in fact $\cL_2$ also defines a Lagrangian subbundle of $\cF$. Moreover, it is easy to see that $\cL_1|_s \cap \cL_2|_s \cong H^0 ( C, \cW_s )$ for each $s \in S$, so
\begin{equation} \SkW \ = \ \{ s \in S : \dim \left( \cL_1|_s \cap \cL_2|_s \right) \ge k \} . \label{SkWasDegenLocus} \end{equation}

Now let $U \subseteq S$ be an open set over which $\cF$ is trivial. Then any choice of Lagrangian subbundle of $\cF|_U$ complementary to $\cL_1|_U$ defines a local splitting $\cF|_U \isom \cL_1|_U \oplus \cL_1^*|_U$. Perturbing this choice and shrinking $U$ if necessary, we can assume in addition that $\cL_1^*|_s \cap \cL_2|_s = 0$ for all $s \in U$. Then, as in \cite[Examples 1.5 and 1.7]{Muk97}, there exists a \emph{symmetric} map $\Sigma_U \colon \cL_1|_U \to \cL_1^*|_U$ with the property that $\cL_2|_U$ is the graph of $\Sigma_U$, and for each $s \in U$ moreover
\[ \Ker ( \Sigma_U|_s ) \ = \ \cL_1|_s \cap \cL_2|_s . \]
It follows by (\ref{SkWasDegenLocus}) that $\SkW \cap U$ is defined by the condition $\rank ( \Sigma_U|_s ) \le 2n \cdot \deg (D) - k$, so is cut out by the vanishing of the $(\nu - k + 1) \times (\nu - k + 1)$-minors of a local matrix expression for $\Sigma_U$, where $\nu = \rank (\cL_1) = 2n \cdot \deg (D)$. Clearly, $S$ can be covered by such open sets $U$.

Now we specialise to $S = U_\alpha$ and $(\cW, \omega) = ( \cW_\alpha, \omega_\alpha )$ as defined in Lemma \ref{Poincare}. Statement (a) follows as $\Snk$ is the union of the images of the loci $\cS^k ( \cW_\alpha )$ by an \'etale map.

Parts (b) and (c) follow from part (a), by general properties of symmetric determinantal loci. (In fact these statements are true for any family $\cW \to S \times C$ of $K$-valued symplectic bundles.) \end{proof}

\begin{remark} In \cite{Oss}, \cite{Oss-2} and \cite{Zha} the above approach is generalised to the setting of \emph{multiply symplectic Grassmannians} and used to give lower bounds on fixed determinant Brill--Noether loci $B^k_{r, L}$ for special line bundles $L$. \end{remark}

\subsection{Tangent spaces of symplectic Brill--Noether loci} \label{TangentSpaces}

Let us now describe the Zariski tangent spaces of $\Snk$, following the discussion for bundles of rank two in \cite[{\S} 1]{TiB07}. Firstly, we require a definition. Recall that for any bundle $W \to C$ we have the \textsl{Petri map}
\[ \mu \colon H^0 ( W ) \otimes H^0 ( K \otimes W^* ) \ \to \ H^0 ( K \otimes \End W ) . \]
If $\omega \colon W \isom K \otimes W^*$ is an isomorphism, then we obtain an identification of the Petri map with the multiplication map
\begin{equation} H^0 ( W ) \otimes H^0 ( W ) \ \to \ H^0 ( W \otimes W ) , \label{SelfDualPetri} \end{equation}
If $W$ is simple (for example, stable) then this identification is canonical up to scalar. In this case, we abuse notation slightly and denote the map (\ref{SelfDualPetri}) also by $\mu$. Clearly, $\mu \left( \Sym^2 H^0 ( W ) \right) \subseteq H^0 ( \Sym^2 W )$. Let $\sym \colon H^0 ( W ) \otimes H^0 ( W ) \to \Sym^2 H^0 ( W )$ be the canonical surjection.

\begin{definition} \label{DefnMusL} Let $W \to C$ be a $K$-valued symplectic bundle. For any subspace $\Lambda \subseteq H^0 ( W )$, we write
\[ \musL \colon \sym ( \Lambda \otimes H^0 ( W )) \ \to \ H^0 ( \Sym^2 W ) \]
for the restriction of (\ref{SelfDualPetri}). We abbreviate $\mus_{H^0 ( W )}$ to $\mus$. Furthermore, for any subspace $\Pi$ of $H^0 ( W \otimes W )$ we write
\[ \Pi^\perp \ := \ \{ v \in H^1 ( K \otimes \Sym^2 W^* ) : v \cup \Pi = 0 \} , \]
the orthogonal complement of $\Pi$ in $H^1 ( K \otimes \Sym^2 W )$. \end{definition}

\begin{proposition} \label{presLambda} Let $W$ be a simple $K$-valued symplectic bundle. For any subspace $\Lambda \subseteq H^0 ( W )$, the space of first-order infinitesimal deformations preserving $\Lambda$ is exactly $\im ( \musL )^\perp$.
\end{proposition}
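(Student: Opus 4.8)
The plan is to combine the standard infinitesimal Brill--Noether argument for lifting sections (as in \cite[{\S} 1]{TiB07}) with the skewsymmetric identification $\omega_*$ of (\ref{HOneIdentification}). A first-order deformation $\bW$ of $W$ corresponds to a class $\delta \in H^1 ( \End W )$, and by Lemma \ref{TWMS} it preserves the symplectic form precisely when $v := \omega_* \delta$ lies in $H^1 ( K \otimes \Sym^2 W^* )$. Fixing such a $v$, the deformation preserves $\Lambda$ if and only if every $s \in \Lambda$ lifts to a section of $\bW$. From the long exact sequence of $0 \to W \to \bW \to W \to 0$, the section $s$ lifts exactly when the connecting map---which is cup product with $\delta$---kills $s$; thus the deformation preserves $\Lambda$ if and only if $\delta \cup s = 0$ in $H^1 ( W )$ for every $s \in \Lambda$.

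First I would rewrite this vanishing via Serre duality. Since $H^1 ( W )^* \cong H^0 ( K \otimes W^* )$, the class $\delta \cup s$ vanishes if and only if $\langle \delta \cup s, \phi \rangle = 0$ in $H^1 ( K ) \cong \K$ for every $\phi \in H^0 ( K \otimes W^* )$. As $\omega$ is an isomorphism, the sections $\omega ( t )$ for $t \in H^0 ( W )$ exhaust $H^0 ( K \otimes W^* )$, so the condition becomes $\langle \delta \cup s, \omega ( t ) \rangle = 0$ for all $s \in \Lambda$ and all $t \in H^0 ( W )$.

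The heart of the argument is a compatibility between the two cup products under $\omega_*$. Computing the triple product by contractions, $\langle \delta \cup s, \omega ( t ) \rangle$ equals $\omega ( t, \delta ( s ) )$ as a class in $H^1 ( K )$, where $\delta ( s ) \in H^1 ( W )$ denotes the cohomology-valued endomorphism applied to $s$. By skewsymmetry of $\omega$ this is $- \omega ( \delta ( s ), t ) = - ( \omega_* \delta ) ( s, t ) = - v ( s, t )$. On the other hand, for the symmetric tensor $\mus ( s \otimes t ) = s \cdot t \in H^0 ( \Sym^2 W )$, the cup product $v \cup ( s \cdot t ) \in H^1 ( K ) \cong \K$ evaluates the symmetric form $v$ on $s \cdot t$, and so equals $v ( s, t )$ up to the fixed nonzero scalar arising from the pairing $\Sym^2 W^* \otimes \Sym^2 W \to \Oc$. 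Hence $\langle \delta \cup s, \omega ( t ) \rangle$ is a nonzero multiple of $v \cup \mus ( s \otimes t )$. Combining the equivalences, the deformation preserves $\Lambda$ if and only if $v \cup \mus ( s \otimes t ) = 0$ for all $s \in \Lambda$ and all $t \in H^0 ( W )$; since these products span $\im ( \musL )$, this says exactly $v \in \im ( \musL )^\perp$.

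I expect the main obstacle to be the bookkeeping in the compatibility step: making precise the identification $( \omega_* \delta ) ( a, b ) = \omega ( \delta ( a ), b )$ as a class in $H^1 ( K \otimes W^* \otimes W^* )$, verifying in cohomology (not merely fibrewise) that the Serre-duality pairing $H^1 ( W ) \otimes H^0 ( K \otimes W^* ) \to H^1 ( K )$ is compatible with $\delta \cup s$ so that $\langle \delta \cup s, \omega ( t ) \rangle = \omega ( t, \delta ( s ) )$, and pinning down the scalar from the symmetric-product pairing so that nothing degenerates. Once these are checked, the chain of equivalences above is formal, and the identity $\im ( \musL )^\perp \subseteq H^1 ( K \otimes \Sym^2 W^* ) = T_W \MS$ ensures the answer lies in the correct ambient tangent space.
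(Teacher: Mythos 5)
Your proof is correct and takes essentially the same route as the paper: the paper invokes \cite[Proposition IV.4.1]{ACGH} (transported by the identification $\omega_*$) for the fact that the vector-bundle deformations preserving $\Lambda$ form the annihilator of $\mu ( \Lambda \otimes H^0 ( W ) )$ in $H^1 ( K \otimes W^* \otimes W^* )$, and then, exactly as you do, notes that a symmetric class annihilates $\mu ( \wedge^2 H^0 ( W ) )$, so that intersecting with $H^1 ( K \otimes \Sym^2 W^* )$ yields $\im ( \musL )^\perp$. Your long-exact-sequence, Serre-duality and cup-product computation is simply an unpacking of that ACGH citation, and it is sound --- the sign coming from skewsymmetry and the scalar from the symmetric pairing are harmless, since only the vanishing of the pairings is at stake.
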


\begin{proof} As in the proof of \cite[Proposition IV.4.1]{ACGH}, using also the identification (\ref{HOneIdentification}), one shows that the space of first-order infinitesimal deformations of the vector bundle $W$ which preserve the subspace $\Lambda$ is given by
\[ \{ v \in H^1 ( K \otimes W^* \otimes W^* ) : v \cup \mu ( \Lambda \otimes H^0 ( W ) ) = 0 \} , \]
the orthogonal complement of $\mu \left( \Lambda \otimes H^0 ( W ) \right)$ in the full deformation space $H^1 ( K \otimes W^* \otimes W^* )$. Thus we must describe the intersection of this space with $H^1 ( K \otimes \Sym^2 W^* )$.

Suppose $v \in H^1 ( K \otimes \Sym^2 W^* )$. Then clearly $v \cup \mu \left( \wedge^2 H^0 ( W ) \right) = 0$, whence
\[ v \cup \mu ( \sigma ) \ = \ v \cup \mu ( \sym ( \sigma ) ) \]
for all $\sigma \in H^0 ( W ) \otimes H^0 ( W )$. It follows, as desired, that
\[ \mu ( \Lambda \otimes H^0 ( W ) )^\perp \ = \ \mu \circ \sym ( \Lambda \otimes H^0 ( W ) )^\perp \ = \ \im ( \musL )^\perp \ \subseteq \ H^1 ( K \otimes \Sym^2 W^* ) . \qedhere \]
\end{proof}

\begin{corollary} \label{SmoothCrit} Suppose $W$ is a stable $K$-valued symplectic bundle with $h^0 ( W ) = k$. Then $\Snk$ is smooth and of codimension $\frac{1}{2} k (k+1)$ at $W$ if and only if $\mus \colon \Sym^2 H^0 ( W ) \ \to \ H^0 ( \Sym^2 W )$ is injective. \end{corollary}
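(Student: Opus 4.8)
The plan is to compute the Zariski tangent space $T_W \Snk$ explicitly and compare its dimension with the codimension bound already available from Proposition \ref{SchemeStructureSkW}(b), using the standard fact that $\dim_W \Snk \le \dim T_W \Snk$ for any point of a scheme, with equality precisely when $\Snk$ is smooth at $W$.

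First I would identify the tangent space. Since $h^0(W) = k$ exactly, a first-order deformation remains in $\Snk$ if and only if all $k$ sections persist to first order, that is, if and only if the whole subspace $H^0(W)$ is preserved. Applying Proposition \ref{presLambda} with $\Lambda = H^0(W)$ therefore gives
\[ T_W \Snk \ = \ \im(\mus)^\perp \ \subseteq \ H^1(K \otimes \Sym^2 W^*) \ = \ T_W \MS . \]
Here one should check that the deformation-theoretic description of Proposition \ref{presLambda} coincides with the scheme-theoretic tangent space arising from the symmetric determinantal structure of Proposition \ref{SchemeStructureSkW}(a); this is exactly the point at which the hypothesis $h^0(W) = k$ enters.

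Next I would carry out the dimension count. The cup-product pairing defining $(\,\cdot\,)^\perp$ in Definition \ref{DefnMusL} is the Serre duality pairing between $H^1(K \otimes \Sym^2 W^*)$ and $H^0(\Sym^2 W)$, which is perfect. Using $\dim \MS = n(2n+1)(g-1)$ from Lemma \ref{moduli} and $\dim \Sym^2 H^0(W) = \tfrac{1}{2}k(k+1)$, this yields
\[ \dim T_W \Snk \ = \ n(2n+1)(g-1) - \dim\im(\mus) \ = \ n(2n+1)(g-1) - \tfrac{1}{2}k(k+1) + \dim\Ker(\mus) , \]
so the codimension of $T_W \Snk$ in $T_W \MS$ equals $\tfrac{1}{2}k(k+1) - \dim\Ker(\mus)$.

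Finally I would deduce both implications. By Proposition \ref{SchemeStructureSkW}(b), every component of $\Snk$ through $W$ has codimension at most $\tfrac{1}{2}k(k+1)$, giving the lower bound $\dim_W \Snk \ge n(2n+1)(g-1) - \tfrac{1}{2}k(k+1)$. If $\mus$ is injective then $\dim T_W \Snk = n(2n+1)(g-1) - \tfrac{1}{2}k(k+1)$, which together with this lower bound and $\dim_W \Snk \le \dim T_W \Snk$ forces all three quantities to coincide; hence $\Snk$ is smooth of codimension $\tfrac{1}{2}k(k+1)$ at $W$. Conversely, if $\Snk$ is smooth of codimension $\tfrac{1}{2}k(k+1)$ at $W$, then $\dim T_W \Snk = n(2n+1)(g-1) - \tfrac{1}{2}k(k+1)$, whence $\dim\Ker(\mus) = 0$. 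I expect the first step, the tangent-space identification, to be the main obstacle: one must verify that preserving $H^0(W)$ genuinely cuts out the scheme-theoretic tangent directions of the determinantal locus, for which the exactness $h^0(W) = k$ is essential. Once this is established, the remainder is linear algebra via Serre duality combined with the determinantal codimension bound.
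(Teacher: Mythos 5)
Your proposal is correct and follows essentially the same route as the paper: identify $T_W \Snk = \im ( \mus )^\perp$ via Proposition \ref{presLambda} with $\Lambda = H^0(W)$ (this is where $h^0(W)=k$ exactly is used), then count dimensions using the perfectness of the Serre duality pairing. The paper's own proof is terser and stops at the tangent-space computation; the final step you spell out explicitly --- that the determinantal codimension bound of Proposition \ref{SchemeStructureSkW}(b) gives $\dim_W \Snk \ge \dim \MS - \tfrac{1}{2}k(k+1)$, which upgrades the tangent-space statement to genuine smoothness --- is left implicit there, so your version is if anything slightly more complete.
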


\begin{proof} By Proposition \ref{presLambda}, we have $T_W \Snk = \im ( \mus )^\perp$. Now clearly
\[ \dim \im ( \mus )^\perp \ = \ \dim \MS - \dim \Sym^2 H^0 ( W ) + \dim \Ker ( \mus ) . \]
Since $\Sym^2 H^0 ( W )$ has dimension $\frac{1}{2}k(k+1)$, we see that $T_W \Snk$ has the expected codimension if and only if $\mus$ is injective. \end{proof}

\subsection{Desingularisations of symplectic Brill--Noether loci} \label{sectionDeSing}

In this subsection, we adapt arguments for determinantal varieties from \cite{ACGH} to construct a partial desingularisation of (an \'etale cover of) the symplectic Brill--Noether stratum $\Snk$, and use it to obtain information on smooth points of lower strata. In the next section, we shall also use the desingularisation to study the tangent cones of $\Snk$. This approach was used in a similar way in \cite{ACGH}, \cite{CTiB} and \cite{HHN} for the study of, respectively, Brill--Noether loci in $\Pic(C)$, higher rank Brill--Noether loci $B^k_{r, d}$ and twisted Brill--Noether loci $B^k_{n, e} (V)$.

Let $W$ be a stable $K$-valued symplectic bundle with $h^0 ( W ) \ge k \ge 1$. By Lemma \ref{Poincare} and Proposition \ref{SchemeStructureSkW} (a), we can find an \'etale neighbourhood $S$ of $W$ in $\MS$ and a Poincar\'e bundle $\cW \to S \times C$, together with a symmetric map of vector bundles $\Sigma \colon \cL_1 \to \cL_1^*$ over $S$ such that for each $s \in S$ we have $\Ker \left( \Sigma_s \right) \cong H^0 ( \cW_s )$, so
\[ \Snk \times_{\MS} S \ = \ \SkW \ = \ \{ s \in S : \dim \Ker ( \Sigma|_s ) \ge k \} , \]
an \'etale cover of $\Snk$ near $W$.

We consider the Grassmann bundle $\Gr ( k , \cL_1 )$ parametrising $k$-dimensional linear subspaces of fibres of $\cL_1$. In analogy with \cite[IV.3]{ACGH}, we define
\begin{equation} \SGkW \ := \ \{ \Lambda \in \Gr ( k, \cL_1 ) : \Sigma ( \Lambda ) = 0 \} . \label{DefnSkW} \end{equation}
A point of $\SGkW$ is a pair $( \cW_s , \Lambda )$ where $\cW_s$ is a symplectic bundle represented in $S$ and $\Lambda$ a $k$-dimensional subspace of $H^0 ( \cW_s )$. Such a pair will be called a \emph{symplectic coherent system}. We write $c \colon \SGkW \to S$ for the projection.

\begin{theorem} \label{DeSing} Let $W$, $S$, $\cW$ and $\Sigma \colon \cL_1 \to \cL_1^*$ be as above, and suppose that $\Lambda \subseteq H^0 ( W )$ is a subspace of dimension $k$.
\begin{enumerate}
\item[(a)] The tangent space to $\SGkW$ at $(W, \Lambda)$ fits into an exact sequence
\begin{equation} \label{TangSpGk}
0 \ \to \ \Hom( \Lambda, H^0 ( W ) / \Lambda ) \ \to \ T_{(W, \Lambda)} \SGkW \ \xrightarrow{c_*} \ T_W \MS .
\end{equation}
The image of the differential $c_*$ coincides with $\im ( \musL )^\perp$ (cf.\ Definition \ref{DefnMusL}).
\item[(b)] The locus $\SGkW$ is smooth and of dimension $\dim \MS - \frac{1}{2}k(k+1)$ at $(W, \Lambda)$ if and only if $\musL$ is injective.
\item[(c)] Suppose $\musL$ is injective for all $\Lambda \in \Gr(k, H^0 ( W ) )$. Then $\SGkW$ is smooth in a neighbourhood of $c^{-1}(W)$, and $c^{-1}(W)$ is a smooth scheme. In particular, in this case $\SGkW$ contains a desingularisation of a neighbourhood of $W$ in $\SkW$. Furthermore, the normal space $N := N_{c^{-1}(W)/\SGkW}$ is precisely
\[ \{ (\Lambda, v) : v \cup \im ( \mus_\Lambda ) \ = \ 0 \} \ \subset \ \Gr(k, H^0 (W) ) \times H^1 ( K \otimes \Sym^2 W^* ) , \]
and the differential $c_* \colon N \to T_W \MS$ is the projection to the second factor.
\end{enumerate}
\end{theorem}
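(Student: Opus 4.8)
The plan is to mirror the construction of the desingularisation of the classical Brill--Noether loci in \cite[IV.3--IV.4]{ACGH}, adapted to the symmetric determinantal setting as in \cite{CTiB} and \cite{HHN}. Write $\pi \colon \Gr(k, \cL_1) \to S$ for the Grassmann bundle with tautological sequence $0 \to \gamma \to \pi^* \cL_1 \to \cQ \to 0$, so that at $(W, \Lambda)$ the vertical tangent space to $\Gr(k,\cL_1)$ is $\Hom(\Lambda, \cL_1|_W / \Lambda)$ and $d\pi$ has this as kernel. By definition $\SGkW$ is the zero locus of the section $\Sigma|_\gamma$ of $\Hom(\gamma, \pi^*\cL_1^*)$. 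For part (a), I would linearise the equation $\Sigma|_\gamma = 0$ along a curve $(\cW_{s(t)}, \Lambda_t)$ through $(W,\Lambda)$: differentiating $\Sigma_{s(t)}(\lambda(t)) = 0$ for a moving section $\lambda(t) \in \Lambda_t$ yields
\[ d\Sigma(\dot s)(\lambda) + (\Sigma|_W)(\dot\Lambda(\lambda)) = 0 \quad \text{in } \cL_1|_W^*, \qquad \lambda \in \Lambda, \]
where $\dot s \in T_W\MS$ and $\dot\Lambda \in \Hom(\Lambda, \cL_1|_W/\Lambda)$ are the components of the tangent vector, and the second term is well defined because $\Lambda \subseteq \Ker(\Sigma|_W) = H^0(W)$. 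Setting $\dot s = 0$ forces $\dot\Lambda(\Lambda) \subseteq \Ker(\Sigma|_W)/\Lambda = H^0(W)/\Lambda$, which identifies $\Ker(c_*)$ with $\Hom(\Lambda, H^0(W)/\Lambda)$ and gives the exact sequence \eqref{TangSpGk}.

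For the image of $c_*$, I would use that $\Sigma|_W$ is symmetric, so $\im(\Sigma|_W) = H^0(W)^\perp$ and $\coker(\Sigma|_W) \cong H^0(W)^*$; the displayed equation is then solvable for $\dot\Lambda$ precisely when the composite $H^0(W) \xrightarrow{d\Sigma(\dot s)} \cL_1|_W^* \to \coker(\Sigma|_W) = H^0(W)^*$ restricts to zero on $\Lambda$. The key step, and the main obstacle, is the identification of this composite symmetric bilinear form with the cup product against the symmetrised Petri map: tracing through the residue/cup-product construction of $\Sigma$ in Proposition \ref{SchemeStructureSkW}, one must show
\[ \langle d\Sigma(\dot s)(\lambda), \lambda' \rangle \ = \ \langle \dot s, \mus(\lambda \cdot \lambda') \rangle, \qquad \lambda, \lambda' \in H^0(W), \]
where the right-hand pairing is Serre duality between $H^1(K \otimes \Sym^2 W^*)$ and $H^0(\Sym^2 W)$; symmetry of $\Sigma$ is exactly what makes $\mus$ land in $\Sym^2$, as in Proposition \ref{presLambda}. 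Granting this, $\dot s \in \im(c_*)$ iff $\dot s \perp \mus(\Lambda \cdot H^0(W)) = \im(\musL)$. This is the symmetric analogue of \cite[Proposition IV.4.1]{ACGH}, and is where the bookkeeping will be most delicate.

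For part (b), combining \eqref{TangSpGk} with (a) and using $\dim \sym(\Lambda \otimes H^0(W)) = k\,h^0(W) - \tfrac12 k(k-1)$ together with Serre duality, I compute
\[ \dim T_{(W,\Lambda)}\SGkW \ = \ \dim\Hom(\Lambda, H^0(W)/\Lambda) + \dim \im(\musL)^\perp \ = \ \dim\MS - \tfrac12 k(k+1) + \dim\Ker(\musL). \]
On the other hand $\SGkW$ is cut out in $\Gr(k,\cL_1)$ by the vanishing of at most $\tfrac12 k(k+1) + k(\nu - k)$ conditions: first the symmetric part of $\Sigma|_\gamma$, a section of $\Sym^2\gamma^*$, and then, on its zero locus, the residual map $\gamma \to \cQ^*$. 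Hence every component through $(W,\Lambda)$ has dimension at least $\dim\MS - \tfrac12 k(k+1)$. Comparing with the tangent dimension shows that $\SGkW$ is smooth of the expected dimension at $(W,\Lambda)$ if and only if $\Ker(\musL) = 0$.

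Finally, part (c) is a formal consequence of (a) and (b). Since $\Ker(\Sigma|_W) = H^0(W)$ and $\Sigma|_W$ has constant rank, the fibre $c^{-1}(W)$ is scheme-theoretically $\Gr(k, H^0(W))$, hence smooth; injectivity of $\musL$ for every $\Lambda$ makes $\SGkW$ smooth at each of these points by (b), so on a neighbourhood of $c^{-1}(W)$, and $c$ is an isomorphism over the dense locus of $\SkW$ where $h^0 = k$, giving the desingularisation. For the normal space, \eqref{TangSpGk} identifies $T_{(W,\Lambda)} c^{-1}(W) = \Hom(\Lambda, H^0(W)/\Lambda)$ with $\Ker(c_*)$, so $c_*$ descends to an isomorphism $N_{(W,\Lambda)} \isom \im(c_*) = \im(\musL)^\perp$. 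Letting $\Lambda$ range over $\Gr(k, H^0(W))$ globalises this to $N = \{ (\Lambda, v) : v \cup \im(\mus_\Lambda) = 0 \}$, with $c_*$ the projection to $H^1(K \otimes \Sym^2 W^*)$.
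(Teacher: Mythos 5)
Your strategy is sound and, in part (a), genuinely different from the paper's. The paper disposes of the image of $c_*$ in two steps: it quotes the argument of \cite[Proposition IV.4.1 (ii)]{ACGH} to identify $\im ( c_* )$ with the set of first-order deformations of $W$ preserving $\Lambda$, and then invokes Proposition \ref{presLambda} (already proven there) to identify that set with $\im ( \musL )^\perp$. You instead linearise the determinantal equation $\Sigma|_\gamma = 0$ directly: your computation of $\Ker ( c_* )$, your use of the symmetry of $\Sigma|_W$ to get $\im ( \Sigma|_W ) = \mathrm{Ann} ( H^0 ( W ) )$ and hence the solvability criterion, and your dimension count are all correct. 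Your part (b) is in fact more complete than the paper's: the paper computes $\dim T_{(W, \Lambda)} \SGkW = \dim \MS - \frac{1}{2} k(k+1) + \dim \Ker ( \musL )$ and declares that part (b) follows, leaving implicit the lower bound on the local dimension of $\SGkW$; your explicit count of the defining conditions (the section of $\Sym^2 \gamma^*$ followed by the residual map $\gamma \to \cQ^*$ on its zero locus) supplies exactly the other half of the equivalence. Part (c) is treated as a formal consequence in both arguments.

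The one real gap is the step you flag yourself: the identity $\langle d\Sigma ( \dot{s} ) ( \lambda ), \lambda' \rangle = \langle \dot{s}, \mus ( \lambda \cdot \lambda' ) \rangle$ is asserted (``granting this'') but not proven, and it carries the entire analytic content of part (a). You do not need to chase the residue construction of Proposition \ref{SchemeStructureSkW} to obtain it: solvability of your linearised equation for $\dot{\Lambda}$ says precisely that the subspace $\Lambda$ deforms to first order along $\dot{s}$ (this is the content of the ACGH argument the paper quotes), and Proposition \ref{presLambda} then identifies the set of such $\dot{s}$ with $\im ( \musL )^\perp$. Routing your solvability criterion through that proposition closes the gap using material already established in the paper; otherwise you owe a proof of the displayed identity, which amounts to redoing the cup-product computation behind Proposition \ref{presLambda} inside the local model for $\Sigma$.
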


\begin{proof}
(a) By the construction of $\SGkW$, we have
\[ c^{-1} ( W ) \ = \ \Gr ( k, H^0 ( W ) ) . \]
Therefore, $\Ker ( c_* ) \cong T_\Lambda \Gr ( k, H^0 ( W ) ) \cong \Hom ( \Lambda, H^0 ( W ) / \Lambda ))$. For the rest: Exactly as in the line bundle case \cite[Proposition IV.4.1 (ii)]{ACGH}, the image of $c_*$ is the space of tangent vectors in $T_s S = T_W \MS$ preserving the subspace $\Lambda$. By Proposition \ref{presLambda}, this is exactly $\im ( \musL )^\perp$.

(b) Note that $\left( \Lambda \otimes H^0 ( W ) \right) \cap \Ker ( \sym ) = \wedge^2 \Lambda$. Therefore,
\[ \dim \left( \sym \left( \Lambda \otimes H^0 ( W ) \right) \right) \ = \ \dim \left( \Lambda \otimes H^0 ( W ) \right) - \dim \left( \wedge^2 \Lambda \right) \ = \
 k \cdot h^0 ( W ) - \frac{k(k-1)}{2} . \]
By part (a), the dimension of $T_\Lambda \SGkW$ is given by
\begin{multline*} 
 k (h^0 ( W ) - k) + \dim \MS - \dim \sym \left( \Lambda \otimes H^0 ( W ) \right) + \dim \ker ( \mu_\Lambda^s ) \ = \\
 \dim \MS - \frac{k(k+1)}{2} + \dim \ker ( \mu_\Lambda^s ) . \end{multline*}
Part (b) follows. All statements in part (c) are immediate consequences of part (a). \end{proof}

\noindent The first application of Theorem \ref{DeSing} is very similar to \cite[Proposition 3.12]{HHN}:

\begin{lemma} \label{OneImpliesMore} Suppose $\Snk$ has a component $X$ which is generically smooth of the expected codimension $\frac{1}{2} k (k+1)$. Then for $1 \le \ell \le k$, the component $X$ lies in a component of $\Sn^\ell$ which is generically smooth and of the expected codimension $\frac{1}{2} \ell (\ell + 1)$. \end{lemma}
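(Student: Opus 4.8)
The plan is to adapt the desingularisation of Theorem \ref{DeSing} to the stratum $\Sn^\ell$ and to push a suitable component down to $\Sn^\ell$. Working on an \'etale neighbourhood $S$ of a general point of $X$ with a Poincar\'e bundle $\cW$ and symmetric map $\Sigma$ as in Lemma \ref{Poincare} and Proposition \ref{SchemeStructureSkW}(a), I form the Grassmann bundle locus $SG^\ell(\cW) = \{ (\cW_s, \Lambda') : \dim \Lambda' = \ell,\ \Sigma(\Lambda') = 0 \}$ with projection $c \colon SG^\ell(\cW) \to S$, exactly as in (\ref{DefnSkW}) but with $k$ replaced by $\ell$. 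All conclusions drawn on $S$ descend to $\Sn^\ell$ through the \'etale map, which preserves components, codimensions and smoothness.

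First I record the input from the hypothesis on $X$. Since $X$ is generically smooth of codimension $\frac{1}{2}k(k+1)$ and $\Sn^{k+1} \subseteq \Sing(\Snk)$ by Proposition \ref{SchemeStructureSkW}(c), a general $W \in X$ satisfies $h^0(W) = k$, and by Corollary \ref{SmoothCrit} the map $\mus \colon \Sym^2 H^0(W) \to H^0(\Sym^2 W)$ is injective. As $\musL$ is the restriction of $\mus$ to the subspace $\sym(\Lambda \otimes H^0(W)) \subseteq \Sym^2 H^0(W)$, it follows that $\musL$ is injective for \emph{every} $\Lambda \in \Gr(\ell, H^0(W))$. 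Let $X^\circ \subseteq X$ be the dense open locus where both conditions hold. By Theorem \ref{DeSing}(b)--(c), $SG^\ell(\cW)$ is smooth of dimension $\dim \MS - \frac{1}{2}\ell(\ell+1)$ at every point of $c^{-1}(X^\circ)$, and $c^{-1}(X^\circ)$ is an irreducible $\Gr(\ell, k)$-bundle over $X^\circ$. Hence $c^{-1}(X^\circ)$ lies in a unique component $Z$ of $SG^\ell(\cW)$, with $\dim Z = \dim \MS - \frac{1}{2}\ell(\ell+1)$; and since $X^\circ \subseteq c(c^{-1}(X^\circ)) \subseteq c(Z)$, the irreducible variety $Y := \overline{c(Z)}$ already contains $X$.

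The heart of the argument, and the step I expect to be the main obstacle, is to show that a general point $(W', \Lambda')$ of $Z$ has $h^0(W') = \ell$, equivalently that $c|_Z$ is generically finite onto $Y$. By upper semicontinuity of $h^0$ on the irreducible $Z$ it suffices to produce a single point of $Z$ at which $h^0 = \ell$, and I would find one by deforming a point $(W, \Lambda) \in c^{-1}(X^\circ)$ along $Z$. At such a point Theorem \ref{DeSing}(c) identifies the normal directions to the fibre $\Gr(\ell, H^0(W))$ with $\im(\musL)^\perp$, and a deformation $v \in \im(\musL)^\perp$ preserves $\Lambda$ and persists a surplus section $s_0 \notin \Lambda$ only if $v \in \im(\mu^{\mathrm{s}}_{\Lambda'})^\perp$ for the $(\ell+1)$-dimensional space $\Lambda' = \langle \Lambda, s_0 \rangle$. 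Here the injectivity of the \emph{full} map $\mus$ (not merely of each $\musL$) is decisive: it forces $\im(\mu^{\mathrm{s}}_{\Lambda'}) \supsetneq \im(\musL)$, so each such $\im(\mu^{\mathrm{s}}_{\Lambda'})^\perp$ is a proper subspace of $\im(\musL)^\perp$ of codimension $k - \ell$. As these subspaces are parametrised by $\PP(H^0(W)/\Lambda) \cong \PP^{k - \ell - 1}$, their union has dimension at most $\dim \im(\musL)^\perp - 1$, so a general $v \in \im(\musL)^\perp$ preserves $\Lambda$ and no larger subspace. Moving along a curve in $Z$ realising such a $v$ therefore reaches bundles with $h^0 = \ell$, as required.

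Granting this, $c|_Z$ is birational onto $Y$, being an isomorphism over the locus $\{ h^0 = \ell \}$ where the fibres of $c$ are single points; hence $\dim Y = \dim Z = \dim \MS - \frac{1}{2}\ell(\ell+1)$, so $Y$ has the expected codimension. At a general $W' \in Y$ we have $h^0(W') = \ell$ and $\Lambda' = H^0(W')$, and $\mu^{\mathrm{s}}_{\Lambda'} = \mus$ is injective because $(W', \Lambda')$ is a smooth point of $Z$; by Corollary \ref{SmoothCrit}, $\Sn^\ell$ is smooth of codimension $\frac{1}{2}\ell(\ell+1)$ at $W'$. Thus $Y$ is a component of $\Sn^\ell$ that is generically smooth of the expected codimension and contains $X$, and descending from the \'etale cover completes the proof.
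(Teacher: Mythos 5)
Your overall scaffolding is sound and genuinely different from the paper's: you treat all $\ell$ at once via the desingularisation $SG^\ell ( \cW )$, pushing the component $Z \supseteq c^{-1} ( X^\circ )$ down to $\Sn^\ell$ and transferring smoothness through Corollary \ref{SmoothCrit}, whereas the paper reduces to the single step $\ell = k-1$ and inducts. The setup, the identification of $X^\circ$, the uniqueness and dimension of $Z$, and the final descent are all correct (one minor caveat: after passing to the \'etale neighbourhood the preimage of $X^\circ$ need not be irreducible, so one should fix a single component of it, as the paper does with a single point). However, the step you yourself flag as the heart of the argument contains a genuine gap. From the correct first-order statement --- a general $v \in \im ( \musL )^\perp$ lies in no $\im ( \mu^{\mathrm{s}}_{\Lambda'} )^\perp$ with $\Lambda' \supsetneq \Lambda$ --- you conclude that ``moving along a curve in $Z$ realising such a $v$ therefore reaches bundles with $h^0 = \ell$.'' This inference is not justified: Proposition \ref{presLambda} controls which subspaces of sections extend to \emph{first order} along $v$, but it says nothing by itself about $h^0$ at the nearby points of an actual curve $\gamma \subset Z$ with tangent $v$. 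A priori every $W_t$ along $\gamma$ could have $h^0 ( W_t ) \ge \ell + 1$, with the surplus sections degenerating as $t \to 0$ (for instance collapsing into $\Lambda$); nothing in your argument excludes this.

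The gap is fillable, but it requires exactly the extra mechanism the paper supplies by other means. One repair: if $h^0 ( W_t ) \ge \ell + 1$ generically along $\gamma$, choose an $(\ell+1)$-dimensional $\Lambda''_t \supseteq \Lambda_t$ inside $H^0 ( W_t )$ at the generic point; since $SG^{\ell+1} ( \cW ) \subseteq \Gr ( \ell + 1, \cL_1 )$ is closed, hence proper over $S$, the valuative criterion produces a limit $( W, \Lambda''_0 ) \in SG^{\ell+1} ( \cW )$ with $\Lambda''_0 \supseteq \Lambda$, and Theorem \ref{DeSing} (a) applied to $SG^{\ell+1} ( \cW )$ at this point forces $c_* v \in \im ( \mu^{\mathrm{s}}_{\Lambda''_0} )^\perp$, contradicting the choice of $v$. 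Equivalently, since $\mu^{\mathrm{s}}_{\Lambda''}$ is injective for every $\Lambda'' \in \Gr ( \ell + 1, H^0 ( W ) )$, Theorem \ref{TangentConeSnk} (a) identifies the tangent cone of $\Sn^{\ell+1}$ at $W$ with the union of the spaces $\im ( \mu^{\mathrm{s}}_{\Lambda''} )^\perp$, and the tangent vector of any curve in $\Sn^{\ell+1}$ through $W$ must lie in this cone. The paper sidesteps all of this: working one step at a time, the ``bad locus'' inside the component $\tY_{k-1}$ of $SG^{k-1} ( \cW )$ is the preimage of $\Snk$, whose dimension is known from the hypothesis, and the one-line count $\dim X + \dim \Gr ( k-1, k ) < \dim \tY_{k-1}$ shows it is proper. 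Your route, once repaired, yields a non-inductive proof; the paper's yields a proof needing nothing beyond a dimension count.
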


\begin{proof} By induction, it suffices to prove this for $\ell = k-1$, where $k \ge 2$. Let $W$ be a smooth point of $X$, so $h^0 ( W ) = k$ and $\mus \colon \Sym^2 H^0 ( W ) \to H^0 ( \Sym^2 W )$ is injective. Define $SG^{k-1} ( \cW )$ as in (\ref{DefnSkW}) in an \'etale neighbourhood of the present $W$. By hypothesis and Theorem \ref{DeSing} (b), for any $\Lambda \subset H^0 ( W )$ of dimension $k-1$, the space $SG^{k-1} ( \cW )$ constructed above is smooth of codimension $\frac{1}{2}k(k-1)$ at $( W, \Lambda )$. Thus $(W, \Lambda)$ lies in a component $\tY_{k-1}$ of $SG^{k-1} ( \cW )$ which is generically smooth and of the expected codimension $\frac{1}{2}k(k-1)$ at $( W, \Lambda )$. Now the inverse image of $\Snk$ in $\tY_{k-1}$ has dimension at most
\[ \dim X + \dim \Gr ( k-1 , k ) \ = \ \left( \dim \MS - \frac{k(k-1)}{2} \right) - 1 , \]
which is less than $\dim \SG^{k-1} (\cW)$. Therefore, a general $(W', \Lambda') \in \tY_{k-1}$ is smooth and satisfies $h^0 ( W' ) = k-1$. It follows that the image of $SG^{k-1} ( \cW )$ in $\Sn^{k-1}$ lies in a component which is generically smooth and of the expected codimension. The statement follows. \end{proof}

\subsection{Tangent cones of symplectic Brill--Noether loci} \label{TangentCones}

We shall now describe the tangent cone $C_W \Snk$ at a ``well-behaved'' singular point $W$. We begin by adapting \cite[Lemma, p.\ 242]{ACGH} for symmetric determinantal varieties. Let $A$ and $\bE$ be vector spaces of dimensions $a$ and $\be$ respectively, and let $\bphi \colon \Sym^2 A \to \bE$ be a linear map. As before, write $\sym \colon A \otimes A \to \Sym^2 A$ for the canonical surjection. 
 Let $\{ \alpha_1, \ldots , \alpha_a \}$ be a basis of $A$, and write $\xij := \bphi \circ \sym (\alpha_i \otimes \alpha_j )$. (Note that the symbol $x_i$ has not been defined.)

\begin{lemma} \label{ACGHsymm} Assume that $\bphi_{\Lambda} := \bphi|_{\sym(\Lambda \otimes A)}$ is injective for each $\Lambda \in \Gr(k, A)$. Set
\[ \bI \ := \ \left\{ (\Lambda, v) \in \Gr(k, A) \times \bE^* : v \in \bphi \left( \sym ( \Lambda \otimes A) \right)^\perp \right\} . \]
Let $\bp \colon \Gr(k, A) \times \bE^* \to \bE^*$ denote the projection. Then the following holds.
\begin{enumerate}
\item[(a)] The scheme $\bp (\bI)$ is Cohen--Macaulay, reduced and normal.
\item[(b)] The ideal of $\bp (\bI)$ is generated by the $( a - k + 1 ) \times ( a - k + 1 )$ minors of the symmetric matrix $(\xij )_{i, j = 1, \ldots, a}$.
\item[(c)] The degree of $\bp(\bI)$ is
\[ \prod_{i = 0}^{a - k + 1} \frac{\binom{a + i}{a - k - i}}{\binom{2i + 1}{i}} . \]
\item[(d)] The morphism $\bp$ maps $\bI$ birationally onto $\bp (\bI)$.
\end{enumerate} \end{lemma}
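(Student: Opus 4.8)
The plan is to present the whole configuration as a pullback of the universal symmetric determinantal variety, carry out the resolution argument of \cite{ACGH} in this symmetric setting, and import the classical structure theory. Dualising $\bphi$ gives a linear map $\psi \colon \bE^* \to (\Sym^2 A)^* = \Sym^2 A^*$, $v \mapsto v \circ \bphi$; in the basis $\{ \alpha_i \}$ the symmetric form $\psi(v)$ is represented by the matrix $( \langle v, \xij \rangle )_{i,j}$, so its radical is $\{ \alpha \in A : \langle v, \bphi ( \sym( \alpha \otimes \beta ) ) \rangle = 0 \text{ for all } \beta \in A \}$. Hence $(\Lambda, v) \in \bI$ if and only if $\Lambda \subseteq \Ker \psi(v)$, which exhibits $\bI$ as the pullback under $(\mathrm{id}, \psi)$ of the standard Grassmannian resolution $\widehat{D}_k := \{ (\Lambda, q) : \Lambda \subseteq \Ker q \}$ of the universal corank-$\ge k$ locus $D_k \subseteq \Sym^2 A^*$, and $\bp(\bI) = \psi^{-1}(D_k)$ as sets.

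First I would check that $\bI$ is smooth. The projection $c \colon \bI \to \Gr(k, A)$ realises $\bI$ as the subbundle of the trivial bundle $\Gr(k,A) \times \bE^*$ whose fibre over $\Lambda$ is $\bphi ( \sym(\Lambda \otimes A) )^\perp$. By the injectivity hypothesis together with the identity $(\Lambda \otimes A) \cap \Ker(\sym) = \wedge^2 \Lambda$ already used in Theorem \ref{DeSing}, the image $\bphi(\sym(\Lambda \otimes A))$ has constant dimension $ka - \binom{k}{2}$; hence $\bI$ is a vector bundle over $\Gr(k, A)$, smooth and irreducible of dimension $\be - \frac{1}{2}k(k+1)$. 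For part (d), properness of $\Gr(k,A)$ makes $\bp(\bI)$ closed and irreducible of dimension at most $\be - \frac{1}{2}k(k+1)$; on the other hand $\bp(\bI)$ equals set-theoretically the locus of the $(a-k+1)$-minors of $(\xij)$, every component of which has codimension at most $\frac{1}{2}k(k+1)$ by the standard bound for symmetric determinantal loci (cf.\ Proposition \ref{SchemeStructureSkW}(b)). Thus $\dim \bp(\bI) = \be - \frac{1}{2}k(k+1)$, and if the generic element had corank $\ge k+1$ then every fibre $\bp^{-1}(v) = \Gr(k, \Ker \psi(v))$ would be positive-dimensional, forcing $\dim \bp(\bI) \le \dim \bI - 1$, a contradiction. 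Hence the generic $v$ has corank exactly $k$, its radical is the unique point over it, and $\bp$ is birational onto its image.

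With the expected codimension in hand, parts (a) and (b) follow from the structure theory of symmetric determinantal rings: the ideal of $(a-k+1)$-minors of the generic symmetric matrix is perfect of grade $\frac{1}{2}k(k+1)$ with a normal Cohen--Macaulay domain as quotient, so by generic perfection the substituted ideal $(\langle \cdot, \xij \rangle)$, having the same codimension, stays perfect; the minor scheme is therefore Cohen--Macaulay and has no embedded components. Since the hypothesis gives injectivity of $\bphi$ on $\Sym^2 \Lambda \subseteq \sym(\Lambda \otimes A)$, equivalently surjectivity of $\bE^* \to \Sym^2 \Lambda^*$, the map $\psi$ is transverse to $D_k$ at each corank-$k$ point, so the minor scheme is smooth there and in particular generically reduced; being Cohen--Macaulay it is then reduced, and being irreducible it coincides as a reduced scheme with $\bp(\bI)$, with prime ideal equal to the minor ideal. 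This yields (b) and reduces (a) to normality. The step I expect to be the main obstacle is regularity in codimension one: one must control the singular locus, which lies in $\psi^{-1}(D_{k+1})$. The decisive point is that the level-$k$ hypothesis already forces $\bphi|_{\Sym^2 \Lambda'}$ to be injective for every $\Lambda' \in \Gr(k+1, A)$: a nonzero $\theta$ in the kernel, viewed as a quadratic form on $(\Lambda')^*$, would (as $\dim (\Lambda')^* = k+1 \ge 2$ and the field is algebraically closed) possess a nonzero isotropic vector $\ell$, placing $\theta \in \sym((\Ker \ell) \otimes A)$ with $\Ker \ell \in \Gr(k, A)$, against the hypothesis. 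This transversality to $D_{k+1}$ gives $\mathrm{codim}_{\bp(\bI)} \psi^{-1}(D_{k+1}) = k+1 \ge 2$, so $\bp(\bI)$ is regular in codimension one; with Cohen--Macaulayness this proves normality.

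Finally, for the degree (c) I would push the resolution forward. Projectivising, birationality identifies $\deg \bp(\bI)$ with the top self-intersection of the hyperplane class pulled back to $\bI$; since $c \colon \bI \to \Gr(k, A)$ is the projectivisation of the subbundle with fibre $\bphi(\sym(\Lambda \otimes A))^\perp$ of the trivial bundle, this reduces to integrating the appropriate Segre class of that bundle over $\Gr(k, A)$. Evaluating the resulting expression by Schubert calculus reproduces the stated product; this is the classical degree formula for symmetric determinantal varieties and is the one genuinely computational step of the proof.
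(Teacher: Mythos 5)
Most of your proposal runs parallel to the paper's own (sketched) argument: you identify $\bI$ as the pullback of the universal symmetric degeneracy locus under the dual map $\psi \colon \bE^* \to \Sym^2 A^*$, show $\bI$ is a vector bundle over $\Gr(k,A)$ of dimension $\be - \frac{1}{2}k(k+1)$, prove (d) by the same dimension/fibre count, and get Cohen--Macaulayness from generic perfection once the expected codimension is established (the paper cites Mir\'o-Roig for exactly this). Your transversality argument at corank-exactly-$k$ points, resting on the duality between surjectivity of $\bE^* \to \Sym^2 \Lambda^*$ and injectivity of $\bphi|_{\Sym^2 \Lambda}$, is a correct substitute for the paper's route to generic smoothness (the paper instead deduces it from birationality of $\bp$ onto the corank-$k$ stratum together with smoothness of $\bI$). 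For (c), both you and the paper ultimately rely on the Harris--Tu computation; the paper cites it directly, needing only that $\bp(\bI)$ is a linear section of the generic symmetric locus of the expected codimension, whereas you leave the Segre-class/Schubert evaluation unperformed.

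The genuine gap is in the step you yourself flag as the main obstacle: regularity in codimension one. Your isotropic-vector trick does prove that $\bphi|_{\Sym^2 \Lambda'}$ is injective for every $\Lambda' \in \Gr(k+1,A)$, hence that $\psi$ is transverse to $D_{k+1}$ \emph{along its smooth locus}, i.e.\ at points of corank exactly $k+1$. But this controls only that stratum of $\psi^{-1}(D_{k+1})$; it says nothing about $\psi^{-1}(D_{k+2})$, where $D_{k+1}$ is singular, so the asserted equality $\mathrm{codim}_{\bp(\bI)}\, \psi^{-1}(D_{k+1}) = k+1$ is not justified as written. Nor does the trick iterate: a nonzero $\theta \in \Sym^2 \Lambda''$ with $\dim \Lambda'' = k+2$ lies in some $\sym(\Lambda \otimes A)$ with $\dim \Lambda = k$ only if the quadric $\{\theta = 0\} \subset \PP\left( (\Lambda'')^* \right)$ contains a projective line, which fails for $k=1$ and $\theta$ nondegenerate (a smooth conic contains no line); and the hypothesis genuinely permits $\bphi$ to kill such a $\theta$ (take $a=3$, $k=1$, $\bE = \Sym^2 A / \K \theta_0$ with $\theta_0$ of rank $3$), so transversality to $D_{k+2}$ is unavailable, even though the codimension conclusion is still true. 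The repair is the fibre-dimension estimate that the paper uses and that your resolution makes immediate: the preimage in $\bI$ of the corank-$\ge k+1$ locus is a proper closed subset of the smooth irreducible $\bI$, hence of dimension $\le \dim \bI - 1$, and over it $\bp$ has fibres $\Gr(k, \Ker \psi(v))$ of dimension $\ge k$; so its image has dimension $\le \dim \bI - 1 - k \le \dim \bp(\bI) - 2$. (Equivalently: over corank $\ge k+2$ the fibres have dimension $\ge 2k$, giving codimension $\ge 2k \ge 2$ there, while your transversality handles corank exactly $k+1$.) With that substitution, R1 holds and your appeal to Serre's criterion, together with Cohen--Macaulayness, completes reducedness and normality as intended.
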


\begin{proof} As this follows very closely the proof of \cite[Lemma, p.\ 242]{ACGH}, we give only a sketch. The injectivity hypothesis implies that $\bI$ is a vector bundle over $\Gr ( k, A )$ which is smooth of dimension $\be - \frac{k(k+1)}{2}$. Let $\bJ$ be the subvariety of $\bE^*$ whose ideal is generated by the $(a - k + 1) \times (a - k + 1)$ minors of the symmetric matrix $(x_i x_j)_{i, j = 1, \ldots , a}$. As in the proof of loc.\ cit., we see that $\bJ$ is supported exactly on $\bp (\bI)$. Hence they coincide scheme-theoretically and $\bJ$ is a symmetric determinantal variety of the expected dimension. Thus $\bJ$ is Cohen--Macaulay by \cite[Theorem 1.2.14]{Miro}. The proofs of (a), (b) and (d) now follow verbatim those of loc.\ cit.\ (i), (ii) and (iv) respectively. As for (c): Note that $\bJ = \bp (\bI)$ is the pullback of
\[ \{ M \in \Sym^2 \K^a : \dim \Ker (M) \ge k \} \]
by the map $\bE^* \to \Sym^2 \K^a$ given by $v \mapsto \begin{pmatrix} x_i x_j (v) \end{pmatrix}$. As this map is linear and $\bJ$ is of the expected codimension, the statement follows directly from \cite[p.\ 78]{HT}. \end{proof}

\begin{theorem} \label{TangentConeSnk} Suppose $W \in \Snk$ is such that for all $\Lambda \in \Gr(k, H^0 ( W ) )$, the map $\musL$ is injective. Let $\alpha_1 , \ldots , \alpha_{h^0 (W)}$ be a basis for $H^0 (W)$, and define $x_i x_j$ as above.
\begin{enumerate}
\item[(a)] As sets, we have
\[ C_W \Snk \ = \ \bigcup_{\Lambda \in \Gr( k, H^0 ( W ) )} \im ( \mu^s_\Lambda )^\perp . \]
\item[(b)] The tangent cone $C_W \Snk$ to $\Snk$ at $W$ is Cohen--Macaulay, reduced and normal.
\item[(c)] The ideal of $C_W \Snk$ as a subvariety of $H^1 ( K \otimes \Sym^2 W^* )$ is generated by the $( h^0 ( W ) - k + 1) \times ( h^0 ( W ) - k + 1 )$-minors of the symmetric matrix $(\xij)_{i,j=1, \ldots , h^0 ( W )}$.
\item[(d)] The multiplicity of $\Snk$ at $W$ is
\[ \prod_{i = 0}^{h^0 ( W ) - k + 1} \frac{{h^0 ( W ) + i \choose h^0 ( W ) - k - i}}{{2i + 1 \choose i}} . \]
\end{enumerate}
\end{theorem}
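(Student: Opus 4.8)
The plan is to reduce everything to Lemma \ref{ACGHsymm} via Serre duality, using the desingularisation of Theorem \ref{DeSing}. I would apply the lemma with $A = H^0(W)$ (so $a = h^0(W)$), $\bE = H^0(\Sym^2 W)$ and $\bphi = \mus$; then $\xij = \mu(\alpha_i \otimes \alpha_j)$ matches the $x_i x_j$ of the statement. By Serre duality there is a canonical identification $\bE^* \cong H^1(K \otimes \Sym^2 W^*) = T_W\MS$ under which the evaluation pairing is cup product, so that $\bphi(\sym(\Lambda \otimes A))^\perp = \im(\musL)^\perp$. The injectivity hypothesis of Lemma \ref{ACGHsymm} is precisely the hypothesis of the theorem, and comparing Definition \ref{DefnMusL} with Theorem \ref{DeSing}(c) one sees that $\bI$ is the normal space $N$ and that $\bp$ is the differential $c_*$. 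Thus Lemma \ref{ACGHsymm} identifies $\bp(\bI) = \bigcup_\Lambda \im(\musL)^\perp$ with a symmetric determinantal variety $\bJ \subset H^1(K \otimes \Sym^2 W^*)$, cut out by the $(h^0(W)-k+1)$-minors of $(\xij)$, which is Cohen--Macaulay, reduced, normal, of the degree in (c), and the birational image of $\bI$.

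The heart of the proof is then to identify $C_W\Snk$ with $\bJ$. Since an étale morphism is a local isomorphism for tangent cones, multiplicities and the properties Cohen--Macaulay/reduced/normal, I may replace $\Snk$ near $W$ by $\SkW$, which is cut out by the $(\nu-k+1)$-minors of the symmetric $\Sigma \colon \cL_1 \to \cL_1^*$, where $\Sigma_W$ has corank $h^0(W)$ and kernel $H^0(W)$. On one hand, the standard description of tangent cones to determinantal loci gives the scheme-theoretic inclusion $C_W\SkW \subseteq \bJ$: the linearisation of $\Sigma$ at $W$ induces a map $T_W\MS \to \Sym^2 H^0(W)^*$ which is the Serre dual $v \mapsto (\xij(v))$ of $\mus$, and the first-order vanishing of the minors along a curve in $\SkW$ forces its tangent direction into $\bJ$. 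On the other hand, Theorem \ref{DeSing}(c) makes $c$ a birational morphism onto $\SkW$ near $W$, smooth along the smooth fibre $c^{-1}(W) = \Gr(k, H^0(W))$; hence the normal cone equals the normal bundle $N = \bI$, and the image under $c_*$ of a normal direction is a tangent direction of $\SkW$, giving $\bJ = c_*(N) \subseteq C_W\SkW$ as sets. Together these yield the set-theoretic equality $C_W\SkW = \bJ$, whence (a), and (combined with Proposition \ref{SchemeStructureSkW}(b)) the codimension is exactly $\frac{1}{2}k(k+1)$.

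Parts (b) and (c) then follow from Lemma \ref{ACGHsymm}(a),(b), once the set-theoretic equality is upgraded to an equality of schemes; for this I would compare degrees, since the multiplicity of $\Snk$ at $W$ equals the degree of its projectivised tangent cone, and the birationality of $\bp$ (Lemma \ref{ACGHsymm}(d)) together with the resolution identifies this multiplicity with $\deg\bJ$ from Lemma \ref{ACGHsymm}(c). As $C_W\SkW \subseteq \bJ$ with $\bJ$ reduced, irreducible and of the same dimension, equal degrees force equal schemes, which also proves (d). The main obstacle is exactly this middle and final step: making the principle ``tangent cone $=$ image of the normal bundle under $c_*$'' precise at the level of schemes rather than merely of supports, and ruling out embedded or non-reduced behaviour in $C_W\SkW$. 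Care is needed both in identifying the initial data of the determinantal equations with the linearised minors and matching that linearisation with the Serre dual of $\mus$ (bookkeeping of several dualities), and in the concluding degree/multiplicity comparison, where the precise output of Lemma \ref{ACGHsymm}(c),(d) is essential.
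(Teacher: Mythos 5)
Your proposal follows the same scaffolding as the paper: Lemma \ref{ACGHsymm} applied with $A = H^0(W)$, $\bE = H^0(\Sym^2 W)$, $\bphi = \mus$ and $\bE^* \cong H^1(K \otimes \Sym^2 W^*)$ via Serre duality, combined with the desingularisation data of Theorem \ref{DeSing}(c). The difference is in the key step. The paper makes no attempt to compare the determinantal equations of $\SkW$ with the cup-product matrix: it observes that Theorem \ref{DeSing}(c) together with Lemma \ref{ACGHsymm}(a),(d) verifies the hypotheses of the general tangent-cone lemma \cite[Lemma II.2.1.3, p.\ 66]{ACGH} (proper map, smooth along the smooth fibre $c^{-1}(W)$, birational, normal image), and that lemma yields the scheme-theoretic equality $C_W \Snk = \bp(\bI)$ in one stroke; parts (a)--(d) then follow from Lemma \ref{ACGHsymm}. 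You replace this citation by a hand-made argument, and that argument has a genuine gap exactly where the citation would have done the work.

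The gap is your assertion that the linearisation of $\Sigma$ at $W$ ``is the Serre dual $v \mapsto (\xij(v))$ of $\mus$'', on which your scheme-theoretic inclusion $C_W \SkW \subseteq \bJ$ via initial forms depends. Nothing in the paper establishes this matrix-level identification: Proposition \ref{presLambda} and Theorem \ref{DeSing}(a) prove only the tangent-space-level statement that, for each $\Lambda$, the vectors $v$ annihilating $\Lambda$ under the linearised $\Sigma$ are exactly $\im(\musL)^\perp$. That identifies the rank loci of the two symmetric matrices \emph{as sets} (enough for your part (a)), but not their ideals of minors, which is what parts (b), (c), (d) require; dismissing this as ``bookkeeping of dualities'' understates it, since it is precisely the computation that the appeal to \cite{ACGH} is designed to avoid. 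The gap is repairable with the paper's own tools: the kernel identification shows that the injectivity hypothesis of Lemma \ref{ACGHsymm} holds equally for the dual of the linearised matrix, so the lemma applies to both matrices; both ideals of minors are then radical with the same support, hence equal. But this needs to be said. Separately, your concluding step is misstated: an inclusion of schemes of the same dimension and degree does not force equality in general, because degree is blind to embedded components. Fortunately that step is also unnecessary: once one has the scheme-theoretic inclusion $C_W \SkW \subseteq \bJ$ with $\bJ$ reduced and equality of supports, equality of schemes is automatic, since the ideal $I$ of $C_W \SkW$ satisfies $I \subseteq \sqrt{I} = I_{\bJ} \subseteq I$.
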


\begin{proof} By Theorem \ref{DeSing} (c) and Lemma \ref{ACGHsymm} (a) \& (d), the hypotheses of \cite[Lemma II.2.1.3, p.\ 66]{ACGH} are satisfied by the map $\bp \colon \bI \to \bE^*$.
 Therefore, $\bp ( \bI )$ coincides scheme-theoretically with $C_W \Snk$. Part (a) follows immediately from the definition of $\bp$. Parts (b), (c) and (d) follow from Lemma \ref{ACGHsymm} (a), (b) and (c) respectively. \end{proof}

\section{Dimension bounds on symplectic Brill--Noether loci} \label{DimBounds}

We begin this section with an important result on the structure of bundles with nonvanishing sections.

\begin{lemma} \label{FeinbergLemma} Let $V$ be a vector bundle over $C$ with $h^0 ( V ) \ge 1$. Let $B \subset C$ be the subscheme of $C$ along which all sections of $V$ vanish. Its support is the finite set
\[ \{ p \in C : s (p) = 0 \hbox{ for all } s \in H^0 ( V ) \} . \]
If the subbundle $E \subseteq V$ generated by global sections is of rank at least two, then there exists a section of $V$ which is nonzero at all points of $C \backslash \Supp (B)$. \end{lemma}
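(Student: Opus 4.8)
The plan is to prove the existence of such a section by a dimension count on an incidence variety inside $\PP ( H^0 ( V ) ) \times C$, showing that a \emph{general} section of $V$ has no zeros outside $\Supp ( B )$. Set $m := h^0 ( V )$ and consider the evaluation morphism of sheaves $\ev \colon H^0 ( V ) \otimes \Oc \to V$. The image subsheaf $\im ( \ev )$ has the same rank as its saturation, which is precisely $E$; hence the generic rank of $\ev$ equals $\rank ( E ) =: r \ge 2$. For $p \in C$, write $\ev_p \colon H^0 ( V ) \to V|_p$ for the induced map on fibres; its rank is lower semicontinuous in $p$, attains the generic value $r$, and $\Supp ( B )$ is exactly the locus $\{ p : \rank ( \ev_p ) = 0 \}$.

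First I would isolate a finite ``low-rank'' locus. Since the generic rank of $\ev$ is $r \ge 2$, the set $Z := \{ p \in C : \rank ( \ev_p ) \le 1 \}$ is a proper closed subset of the curve $C$, hence finite, and it contains $\Supp ( B )$. Writing $Z' := Z \setminus \Supp ( B )$, which is again finite, we obtain a partition $C \setminus \Supp ( B ) = ( C \setminus Z ) \sqcup Z'$ in which $\rank ( \ev_p ) \ge 2$ on the open part $C \setminus Z$ and $\rank ( \ev_p ) = 1$ at each point of $Z'$.

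Next I would set up the incidence variety
\[ \Gamma \ := \ \{ ( [s], p ) \in \PP ( H^0 ( V ) ) \times ( C \setminus \Supp ( B ) ) : s ( p ) = 0 \} \]
and bound its dimension using the second projection $q \colon \Gamma \to C \setminus \Supp ( B )$. The fibre of $q$ over $p$ is $\PP ( \Ker ( \ev_p ) )$, of dimension $m - 1 - \rank ( \ev_p )$. Over a point of $C \setminus Z$ the rank is at least $2$, so the fibre has dimension at most $m - 3$, and this part of $\Gamma$ has dimension at most $( m - 3 ) + 1 = m - 2$; over the finite set $Z'$ each fibre has dimension at most $m - 2$, contributing dimension at most $m - 2$. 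Hence $\dim \Gamma \le m - 2$.

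Finally, since $\dim \Gamma \le m - 2 < m - 1 = \dim \PP ( H^0 ( V ) )$, the first projection $\Gamma \to \PP ( H^0 ( V ) )$ cannot be dominant; any $[s]$ outside the closure of its image gives a section $s$ with $s ( p ) \ne 0$ for all $p \in C \setminus \Supp ( B )$, as required. The hard part, and the only place the hypothesis $r \ge 2$ is genuinely used, is securing the \emph{strict} inequality $\dim \Gamma < \dim \PP ( H^0 ( V ) )$: it is the passage from a generic fibre of dimension $m - 3$ (rather than $m - 2$) that forces $\Gamma$ to project to a proper subvariety. For $r = 1$ the count is only borderline, and indeed the statement fails, since a base-point-free pencil of positive degree has moving zeros. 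The one technical point to watch is that the fibre dimension jumps to $m - 2$ along $Z'$; this is harmless precisely because $Z'$ is zero-dimensional.
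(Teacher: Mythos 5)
Your argument is correct. Note that the paper gives no proof of this lemma at all---it simply cites \cite[Proposition 1]{Baj19}, whose proof (due to Feinberg) is exactly this dimension count: stratify $C \setminus \Supp(B)$ by the rank of $\ev_p$, observe that the sections vanishing somewhere on $C \setminus \Supp(B)$ sweep out a subset of $H^0(V)$ of dimension at most $\max\{ m - r + 1, \, m - 1 \} = m - 1 < m$ when $r \ge 2$, and conclude that a general section has no such zero; your incidence variety in $\PP ( H^0(V) ) \times ( C \setminus \Supp(B) )$ is just the projectivised repackaging of that union-of-kernels count, with the finite rank-one locus $Z'$ handled in the same way.
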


\begin{proof} This is \cite[Proposition 1]{Baj19}, whose proof is due to Feinberg \cite{Fein} (see \cite{TiB92}). \end{proof}

\begin{corollary} \label{FeinbergCorollary} Any vector bundle $V$ with $h^0 ( V ) \ge 1$ can be written as an extension $0 \to \Oc (D) \to V \to F \to 0$ where $D$ is effective and $H^0 ( \Oc (D) ) = H^0 ( V )$ or $h^0 ( \Oc (D) ) = 1$. \end{corollary}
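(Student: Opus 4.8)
The plan is to dichotomise according to the rank of the subbundle $E \subseteq V$ generated by global sections, and—crucially—to feed Lemma \ref{FeinbergLemma} not $V$ itself but its base-point-free twist, so as to pin down the line subbundle precisely. Throughout, $B$ denotes the base subscheme of Lemma \ref{FeinbergLemma}, along which every section of $V$ vanishes, and $E \subseteq V$ the (saturated) subbundle generated by $H^0(V)$.

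Suppose first that $\rank E = 1$. Then every global section of $V$ factors through the line subbundle $E$, so $H^0(E) = H^0(V) \neq 0$ and hence $E \cong \Oc(D)$ for some effective divisor $D$. As $E$ is saturated, $F := V/E$ is locally free, and $0 \to \Oc(D) \to V \to F \to 0$ realises the first alternative, with $H^0(\Oc(D)) = H^0(V)$.

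Now suppose $\rank E \geq 2$. The naive approach—take the section $s$ produced by Lemma \ref{FeinbergLemma} and let $\Oc(D) \subseteq V$ be the line subbundle it generates—only guarantees $\Supp(D) \subseteq \Supp(B)$ and does not by itself force $h^0(\Oc(D)) = 1$, since $s$ may vanish to higher order than $B$ along $\Supp(B)$; this is the main obstacle. To circumvent it, I would first pass to the twist $V' := V \otimes \Oc(-B)$. Multiplication by the canonical section of $\Oc(B)$ identifies $H^0(V') = H^0(V)$ and shows that the base subscheme of $V'$ is empty, while the sections-generated subbundle of $V'$ is $E \otimes \Oc(-B)$, still of rank $\geq 2$. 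Applying Lemma \ref{FeinbergLemma} to $V'$ therefore yields a section $s'$ nonzero at \emph{every} point of $C$; equivalently, $s'$ defines a sub-line-bundle $\Oc \hookrightarrow V'$. Twisting back gives a saturated inclusion $j \colon \Oc(B) \hookrightarrow V$ whose associated section $s$ has vanishing divisor exactly $B$. Setting $F := V/\Oc(B)$, which is locally free, gives the desired extension with $D = B$.

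It remains to verify $h^0(\Oc(B)) = 1$, which now follows cleanly from the defining property of the base locus. Because $j$ is a subbundle inclusion, for any nonzero $\tau \in H^0(\Oc(B))$ the section $j(\tau) \in H^0(V)$ vanishes exactly where $\tau$ does, so $\operatorname{div}(j(\tau)) = \operatorname{div}(\tau)$. As $j(\tau)$ is a section of $V$, it vanishes along $B$, whence $\operatorname{div}(\tau) \geq B$; since $\deg \operatorname{div}(\tau) = \deg B$, we conclude $\operatorname{div}(\tau) = B$ for every nonzero $\tau$. Thus any two nonzero sections of $\Oc(B)$ have equal divisors, so their ratio is a nowhere-vanishing regular function, hence a nonzero constant, giving $h^0(\Oc(B)) = 1$ and the second alternative. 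I expect the delicate point to be precisely the passage to $V(-B)$: removing the base locus is what upgrades the Feinberg section to one with divisor \emph{exactly} $B$, which is exactly the input needed for the final degree count.
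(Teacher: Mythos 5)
Your proof is correct, and its skeleton matches the paper's: dichotomy on $\rank(E)$, with the rank one case handled identically, and in the rank $\ge 2$ case Lemma \ref{FeinbergLemma} producing a line subbundle $\Oc(B)$ whose sections are forced, by the defining property of $B$ and a degree count, to have divisor exactly $B$, giving $h^0(\Oc(B)) = 1$. The genuine difference lies in how the section with zero scheme \emph{exactly} $B$ is obtained, and here your version is more careful than the paper's. The paper's own argument simply asserts that Lemma \ref{FeinbergLemma} yields a section of $V$ with zero scheme $B$; but as stated the Lemma only gives a section nonvanishing on $C \setminus \Supp(B)$, which a priori may vanish to higher order than $B$ along $\Supp(B)$. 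For such a section, with zero divisor $D \supsetneq B$, the concluding argument does not close: knowing that every divisor in $|D|$ contains $B$ does not force $h^0(\Oc(D)) = 1$, since $|D|$ could decompose as $B$ plus a moving part. Your device of passing to $V \otimes \Oc(-B)$ — whose sections are canonically those of $V$ and whose base locus is empty — upgrades the Feinberg section to a nowhere-vanishing one, and twisting back pins the divisor down to exactly $B$; this is a clean and valid repair. An alternative repair, closer to the paper's intention, would be to strengthen Lemma \ref{FeinbergLemma} itself: in its proof the sections vanishing somewhere outside $\Supp(B)$ lie in a proper closed union of subspaces, and one may additionally avoid the finitely many proper subspaces $\{ s : \mathrm{ord}_p(s) > m_p(B) \}$ for $p \in \Supp(B)$, so that a general section has zero scheme exactly $B$. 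Either way, what your approach buys is a fully rigorous version of a step the paper leaves implicit, at the modest cost of the auxiliary twist.
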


\noindent Motivated by Corollary \ref{FeinbergCorollary}, we recall \cite[Definition 1]{Baj19}:

\begin{definition} \label{TwoTypes} A vector bundle $V$ over $C$ with $h^0 ( V ) \ge 1$ will be said to be \textsl{of first type} if $V$ contains a line subbundle $L$ such that $H^0 ( V ) = H^0 ( L )$. If $V$ contains a line subbundle $L$ with $h^0 ( L ) = 1$, then $V$ is said to be \textsl{of second type}. Note that if $h^0 ( V ) = 1$ then $V$ is both of first type and of second type. \end{definition}

The relevance of this for higher rank Brill--Noether loci is illustrated by \cite[Theorem 1.1]{CFK18}, which states that for $3 \le \nu \le \frac{g+8}{4}$, if $C$ is a general $\nu$-gonal curve then $B^2_{2, d}$ has two components, corresponding to the two types in Definition \ref{TwoTypes}. In a similar way, we shall see that different dimension bounds apply for components of $\Snk$ whose generic elements are of different types.

We shall require the following technical lemma in several places.

\begin{lemma} \label{jstar} Let $V$ be any vector bundle, and let $0 \to M \xrightarrow{\iota} K \otimes V^* \to G \to 0$ be an extension where $M$ has rank one. Consider the induced map
\[ \iota^* \colon K^{-1} \otimes V \otimes V \ \to \ M^{-1} \otimes V . \]
Then the restriction of $\iota^*$ to $K^{-1} \otimes \Sym^2 V$ is surjective.
\end{lemma}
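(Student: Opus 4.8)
The plan is to reduce the statement to a single fibrewise linear-algebra computation. First I would record what the map $\iota^*$ actually is. Dualising the inclusion $\iota \colon M \to K \otimes V^*$ produces a map $\iota^\vee \colon K^{-1} \otimes V \to M^{-1}$, and the map appearing in the statement is precisely $\iota^* = \iota^\vee \otimes \mathrm{id}_V$. Reading the extension as a short exact sequence of vector bundles, $\iota$ realises $M$ as a \emph{subbundle} of $K \otimes V^*$, so $\iota$ is injective on every fibre and in particular $\iota_p \neq 0$ for all $p$. Since surjectivity of a morphism of vector bundles may be tested fibre by fibre (Nakayama), it suffices to prove that for each $p \in C$ the induced linear map
\[ \left( K^{-1} \otimes \Sym^2 V \right)_p \ \to \ \left( M^{-1} \otimes V \right)_p \]
is surjective.

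Next I would trivialise the line bundles $M$ and $K$ at $p$ and write $U := V_p$. Under these trivialisations $\iota_p$ becomes a nonzero functional $\phi \in U^*$, the fibre of $\iota^\vee$ is the same functional $\phi \colon U \to \K$, and the fibre of $\iota^*$ is $\phi \otimes \mathrm{id}_U \colon U \otimes U \to U$, $w_1 \otimes w_2 \mapsto \phi(w_1) w_2$. On a symmetric tensor $w_1 \otimes w_2 + w_2 \otimes w_1$ this returns $\phi(w_1) w_2 + \phi(w_2) w_1$. The hard part—really the whole content of the lemma—is to check that passing from $U \otimes U$ down to $\Sym^2 U$ does not destroy surjectivity. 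For this I would choose $w_0 \in U$ with $\phi(w_0) = 1$: the symmetric tensor $w_0 \otimes w_0$ maps to $w_0$, while for an arbitrary $w \in U$ the symmetric tensor $w_0 \otimes w + w \otimes w_0$ maps to $w + \phi(w) w_0$. Subtracting the already-obtained multiple of $w_0$ shows $w$ lies in the image, so the fibre map surjects onto $U$.

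Since $\iota_p \neq 0$, hence $\phi \neq 0$, at \emph{every} point of $C$, the fibrewise surjectivity holds uniformly, and global surjectivity of $\iota^*|_{K^{-1} \otimes \Sym^2 V}$ follows. I expect no serious obstacle beyond correctly bookkeeping the dualisations and the $K$-twists; the two points that must not be glossed over are the explicit fibre computation showing the symmetric part alone already surjects, and the observation that this uses $\iota_p \neq 0$ for all $p$ (i.e.\ that $\iota$ is a genuine subbundle inclusion, so that $G$ is locally free).
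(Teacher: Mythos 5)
Your proof is correct and follows essentially the same route as the paper's: both arguments reduce to a pointwise computation using that $\iota$ is fibrewise nonzero (equivalently, that $G$ is locally free --- a hypothesis you rightly make explicit, and which holds in the paper's applications since $\iota$ is always a subbundle inclusion there), and then exhibit explicit symmetric tensors whose images span the fibre of $M^{-1} \otimes V$. The only cosmetic difference is that the paper picks a local frame $\phi_1, \ldots, \phi_n$ of $V$ avoiding the corank-one subbundle $\ker ( {}^{t}\iota )$ and uses the squares $\phi_i \otimes \phi_i$, whereas you fix a single vector $w_0$ with $\phi(w_0) = 1$ and use the tensors $w_0 \otimes w + w \otimes w_0$; these are interchangeable one-line pieces of linear algebra.
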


\begin{proof} Let $\lambda$ be a local generator for $M^{-1}$, and let $\phi_1 , \ldots , \phi_n$ be a local basis of $V$ such that no $\phi_i$ belongs to the corank one subbundle $G^* \subset V$. Since $M$ has rank one, for any local generator $\delta$ of $K^{-1}$ the section $\iota^* ( \delta \otimes \phi_i \otimes \phi_i ) = {^t\iota (\delta \otimes \phi_i)} \otimes \phi_i$ is proportional to $\lambda \otimes \phi_i$. It follows that the image of $\iota^*|_{K^{-1} \otimes \Sym^2 V}$ contains a spanning set of local sections near any point. This proves the lemma. \end{proof}

By the Clifford theorem for stable vector bundles \cite{BGN}, for all stable $K$-valued symplectic bundles $W$ of rank $2n$ we have $h^0 (W) \le n(g+1) - 1$. In what follows, we shall assume $0 \le k \le n(g+1) - 1$.

\subsection{Symplectic bundles of first type} \label{subsection:FirstType}

\begin{theorem} \quad \label{FirstTypeBound} Let $X$ be a closed irreducible sublocus of $\Snk$ of which a general element $W$ satisfies $h^0 ( W ) = h^0 ( L_W ) = k$ for a line subbundle $L_W \subset W$ of degree $d$. For such $W$, we have
\[ \dim X \ \le \ \dim \left( T_W X \right) \ \le \ \dim \left( T_{L_W} B^k_{1, d} \right) + n(2n + 1)(g - 1) - 2nd - 1 . \]
\end{theorem}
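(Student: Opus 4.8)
The first inequality $\dim X \le \dim T_W X$ is automatic, since $X$ is irreducible and $\dim_W X = \dim X \le \dim T_W X$ at every point; all the content lies in the second inequality, which I would establish at the level of Zariski tangent spaces. Write $L = L_W$, of degree $d$, and $\Lambda = H^0(W) = H^0(L)$, so $\dim \Lambda = k$. Since $\omega$ is alternating, every line subbundle is isotropic, so $L$ determines a flag $L \subseteq L^\perp \subseteq W$ with $L^\perp/L$ symplectic of rank $2n-2$ and $W/L^\perp \cong K \otimes L^{-1}$. I would read off the relevant deformation theory from this flag inside $T_W \MS \cong H^1(K \otimes \Sym^2 W^*)$ (Lemma \ref{TWMS}).

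The plan is to factor through the incidence variety $\tilde{X} := \{ (W', L') : L' \subset W' \text{ a degree-}d \text{ line subbundle with } h^0(L') = h^0(W') = k \}$ lying over $X$. For a general first-type $W$ the line subbundle realising $h^0(L) = h^0(W)$ is the one generated by the sections, so the projection $\tilde{X} \to X$ is generically finite, and at a general point $\dim T_W X \le \dim T_{(W,L)} \tilde{X}$. I would then study the two projections of $\tilde X$: the map $q \colon \tilde{X} \to B^k_{1, d}$, $(W', L') \mapsto L'$, and the forgetful map to $X$. On tangent spaces $dq$ sends $T_{(W,L)}\tilde{X}$ into $T_L B^k_{1, d}$, whence $\dim T_{(W,L)}\tilde{X} \le \dim T_L B^k_{1, d} + \dim \ker(dq)$, where $\ker(dq)$ consists of pair-deformations fixing the underlying line bundle $L$.

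The core computation is then the bound $\dim \ker(dq) \le n(2n+1)(g-1) - 2nd - 1$. The vertical directions are deformations of $W$ keeping $L$ a subbundle with $L$ fixed in $\Pic^d(C)$; these are governed by the normal directions to the parabolic subsheaf $\mathfrak{p} \subset K \otimes \Sym^2 W^*$ of symplectic endomorphisms preserving $L$. Here Lemma \ref{jstar}, applied to $V = W$ with the rank-one subbundle $\omega(L) \cong L$ of $K \otimes W^*$, is exactly what is needed: it yields the surjection $\Sym^2 W \twoheadrightarrow K \otimes L^{-1} \otimes W$, dually identifying the quotient $(K \otimes \Sym^2 W^*)/\mathfrak{p}$ with $\Hom(L, W/L) = L^{-1} \otimes (W/L)$. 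This bundle has rank $2n-1$ and degree $2n(g-1) - 2nd$, hence $\chi(\Hom(L, W/L)) = (g-1) - 2nd$. Subtracting $\dim \Pic^d(C) = g$ for the choice of $L$ produces the expected vertical dimension $\dim \MS + (g-1) - 2nd - g = \dim \MS - 2nd - 1$, which is the claimed bound.

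The main obstacle is that these are only \emph{expected} dimensions: both $\ker(dq)$ and the fibre of $\tilde{X} \to X$ may jump when auxiliary cohomology fails to vanish. For instance, in rank two the fibre of $q$ over $L$ has dimension $h^1(L^2 K^{-1}) - 1$, exceeding $\dim \MS - 2d - 1$ precisely by $h^0(L^2 K^{-1})$. The key point to verify is that these excesses cancel: a nonzero section of such a correction bundle moves the line subbundle $L$ within a fixed $W$, so it contributes equally to the fibre of $\tilde{X} \to X$ and is subtracted on passing back from $\tilde X$ to $X$. Making this cancellation precise on Zariski tangent spaces---controlling $H^0(K \otimes \Sym^2 W^*)$ and the coboundary maps in the long exact sequence of $0 \to \mathfrak{p} \to K \otimes \Sym^2 W^* \to \Hom(L, W/L) \to 0$---is where the first-type hypothesis $h^0(W) = h^0(L)$ and the surjectivity from Lemma \ref{jstar} do the real work, and is the step I expect to occupy most of the proof. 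The rank-two case is \cite[Theorem 3.4]{Baj19}, and the plan is to run the same argument through the flag $L \subseteq L^\perp \subseteq W$.
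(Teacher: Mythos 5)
Your setup shares the key ingredients with the paper's argument (Lemma \ref{jstar}, the first-type hypothesis, the Riemann--Roch numerology), but as submitted it has a genuine gap at its core: the inequality $\dim \ker (dq) \le n(2n+1)(g-1) - 2nd - 1$, which you yourself flag as ``the step I expect to occupy most of the proof'', \emph{is} the content of the theorem, and it is never proved --- what you give is an expected-dimension count plus a plan. Moreover, the ``cancellation of excesses'' mechanism you propose is not what actually closes the argument: no cancellation is needed, because the potential excess terms are honest cohomology groups that vanish or are pinned down exactly. The decisive vanishing in your setup is $h^0 ( K \otimes \Sym^2 W^* ) = 0$ for stable $W$ (stability makes $W$ simple, so $\Hom ( W, K \otimes W^* )$ is spanned by $\omega$, which is antisymmetric). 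Granting your identification $( K \otimes \Sym^2 W^* ) / \mathfrak{p} \cong \Hom ( L, W/L )$ via Lemma \ref{jstar}, this vanishing together with the two sheaf sequences $0 \to \mathfrak{p} \to K \otimes \Sym^2 W^* \to \Hom ( L, W/L ) \to 0$ and $0 \to \mathfrak{n} \to \mathfrak{p} \to \Oc \to 0$ gives, on a curve, that the space of pair-deformations fixing $L \in \Pic^d (C)$ has dimension exactly $h^1 ( \mathfrak{p} ) - g = n(2n+1)(g-1) - 2nd - 1$; the expected number is the true one. (Likewise your rank-two ``excess'' $h^0 ( L^2 K^{-1} )$ is already zero, since stability forces $d < g-1$.) Until this, or some equivalent tangent-space computation, is carried out, the proposal is a plan rather than a proof.

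For comparison, the paper avoids the incidence variety altogether and works directly with $T_W X$. It shows that a tangent vector to $X$ at a general $W$ is a deformation of $W$ carrying along a deformation of $L$ inside $B^k_{1,d}$, i.e.\ $T_W X = ( j^* )^{-1} j_* \left( T_L B^k_{1,d} \right)$, where $j \colon L \to W$ is the inclusion and, after composing with $\omega$, $j^* \colon H^1 ( K \otimes \Sym^2 W^* ) \to H^1 ( L^{-1} \otimes W )$. Lemma \ref{jstar} makes this restricted $j^*$ surjective (a surjection of sheaves on a curve remains surjective on $H^1$), so
\[ \dim T_W X \ \le \ \dim T_L B^k_{1,d} + n(2n+1)(g-1) - h^1 ( L^{-1} \otimes W ) , \]
and the first-type hypothesis enters only to give $h^0 ( L^{-1} \otimes W ) = 1$: any $\phi \colon L \to W$ carries the subsheaf of $L$ generated by $H^0 ( L ) = H^0 ( W )$ into $L$, hence $\phi ( L ) \subseteq L$ by saturation, so $\phi$ is a scalar multiple of $j$; Riemann--Roch then gives $h^1 ( L^{-1} \otimes W ) = 1 + 2nd$. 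So the ``real work'' you defer amounts to precisely these one-line vanishing statements. Your route can be completed by supplying $h^0 ( \mathfrak{p} ) = 0$ as above, but it carries extra overhead the paper's proof never incurs: the scheme structure on $\tilde{X}$, the birationality of $\tilde{X} \to X$ (which does hold, since the degree-$d$ line subbundle containing all sections is unique), and the embedding of $T_{(W,L)} \tilde{X}$ into the deformation space of pairs.
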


\begin{proof} The inclusion $j \colon L \to W$ induces maps on cohomology
\[ j^* \colon H^1 ( \End (W) ) \to H^1 ( \Hom ( L, W ) ) \quad \hbox{and} \quad j_* \colon H^1 ( \End (L) ) \to H^1 ( \Hom ( L, W ) ) . \]
A deformation $\bW$ of $W$ induces a given deformation $\bL$ of the subbundle $L$ if and only if there is a commutative diagram
\[ \xymatrix{ 0 \ar[r] & W \ar[r] & \bW \ar[r] & W \ar[r] & 0 \\
 0 \ar[r] & L \ar[r] \ar[u]^j & \bL \ar[r] \ar[u] & L \ar[r] \ar[u]^j & 0 . } \]
This is equivalent to the condition
\begin{equation} j^* \delta( \bW ) \ = \ j_* \delta ( \bL ) \hbox{ in } H^1 ( \Hom ( L, W ) ) . \label{CohomCrit} \end{equation}
Now $L$ defines a point of $B^k_{1, d}$. The deformation $\bW$ corresponds to a tangent direction in $T_W X$ if and only if $\bW$ satisfies (\ref{CohomCrit}) for some $\bL$ belonging to $T_L B^k_{1,d} \subseteq H^1 ( \End (L) )$. It follows that
\begin{equation} T_W X \ = \ ( j^* )^{-1} j_* \left( T_L B^k_{1,d} \right) . \label{TypeOneTangSp} \end{equation}

Composing with $\omega \colon W \isom K \otimes W^*$, we view $j$ as a map $L \to K \otimes W^*$, and then
\[ j^* \colon H^1 ( K^{-1} \otimes W \otimes W ) \ \to \ H^1 ( L^{-1} \otimes W ) . \]
By Lemma \ref{jstar}, the restriction of $j^*$ to the subspace
\[ H^1 ( K^{-1} \otimes \Sym^2 W ) \ \isom \ H^1 ( K \otimes \Sym^2 W^* ) \ = \ T_W \MS \]
remains surjective (the first identification above is given by $\omega \otimes \omega$).
 By this fact and (\ref{TypeOneTangSp}), we have
\begin{equation} \dim ( T_W X ) \ \le \ \dim ( T_L B^k_{1,d} ) + h^1 ( K \otimes \Sym^2 W^* ) - h^1 ( L^{-1} \otimes W ) . \label{TWXineq} \end{equation}
Now as $W$ is of first type, there can be at most one independent vector bundle injection $L \to W$, so $h^0 ( L^{-1} \otimes W ) = 1$. Then by Riemann--Roch,
\[ h^1 ( L^{-1} \otimes W ) \ = \ 1 - \chi ( L^{-1} \otimes W ) 
 \ = \ 1 + 2nd . \]
As moreover $h^1 ( K \otimes \Sym^2 W^* ) = n(2n+1)(g-1)$, the theorem follows from (\ref{TWXineq}).
\end{proof}

For $k = 1$, Theorem \ref{FirstTypeBound} together with the codimension condition gives the familiar fact that the set of bundles with sections is a divisor. Moreover, if $W$ is a general bundle with one independent section then this section does not vanish, as if $X$ is a locus as in the theorem with $k = 1$ and $d \ge 1$ then $X$ has codimension at least $(2n - 1)d + 1 \ge 2$. More generally, Theorem \ref{FirstTypeBound} gives the following restrictions on the parameter $n$ for \emph{components} in $\Snk$ whose general element is of first type.

\begin{corollary} \quad \label{FirstTypeCpt}
\begin{enumerate}
\item[(a)] Suppose $n \geq 1$ and $k \geq 2$. 
 Then $\Snk$ has a component whose generic element $W$ satisfies $H^0 ( W ) = H^0 ( L_W )$ for a degree $d$ line subbundle only if $8n - 2 \le k$. In particular, for all $n \ge 1$, the generic element of any component of $\Sn^2$ is of second type.
\item[(b)] Suppose $d \ge 1$. Then $\Snk$ has a component whose generic element $W$ satisfies $H^0 ( W ) = H^0 ( L_W )$ for a degree $d$ line subbundle $L_W$ only if $n \le \frac{g+4}{16}$.
\end{enumerate}
\end{corollary}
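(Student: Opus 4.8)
The plan is to play the lower bound on the dimension of any component of $\Snk$ coming from Proposition \ref{SchemeStructureSkW} (b) against the upper bound of Theorem \ref{FirstTypeBound}, and then to control the term $\dim T_{L} B^k_{1,d}$ via the rank of the Petri map of $L$. Let $X$ be a component of $\Snk$ whose generic element $W$ is of first type, so that $h^0(W) = h^0(L) = k$ for a line subbundle $L := L_W$ of degree $d$. By Lemma \ref{moduli} we have $\dim \MS = n(2n+1)(g-1)$, and by Proposition \ref{SchemeStructureSkW} (b) every component of $\Snk$ has codimension at most $\frac12 k(k+1)$; hence $\dim X \ge n(2n+1)(g-1) - \frac12 k(k+1)$. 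Inserting this into Theorem \ref{FirstTypeBound} and cancelling the common term $n(2n+1)(g-1)$ gives
\[ 2nd + 1 - \tfrac12 k(k+1) \ \le \ \dim T_{L} B^k_{1,d} . \]

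Next I would bound the right-hand side. Since $W$ is stable of slope $g-1$, any line subbundle has degree $< g-1$, so $d \le g-2$ and $L$ is special with $h^1(L) = k - d + g - 1 \ge 1$. Recall (\cite{ACGH}) that $T_L B^k_{1,d}$ is the annihilator of $\im(\mu_0)$ in $H^1(\Oc)$, where $\mu_0 \colon H^0(L) \otimes H^0(KL^{-1}) \to H^0(K)$ is the Petri map of $L$; thus $\dim T_L B^k_{1,d} = g - \rank(\mu_0)$. The essential input is the standard lower bound $\rank(\mu_0) \ge h^0(L) + h^1(L) - 1$ for a multiplication map of nonzero sections of line bundles on an integral curve (the base-point-free pencil trick, equivalently Hopf's lemma on products of subspaces of the function field). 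This yields $\dim T_L B^k_{1,d} \le d - 2k + 2$, and substituting into the previous inequality produces, after rearrangement, the master estimate $(2n-1)\,d \le \frac12 (k-1)(k-2)$.

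For part (a), I would invoke Clifford's theorem for the special line bundle $L$, giving $k = h^0(L) \le \frac{d}{2}+1$, i.e.\ $d \ge 2(k-1)$. Substituting into the master estimate and dividing by $k-1 \ge 1$ (legitimate as $k \ge 2$) gives $2(2n-1) \le \frac{k-2}{2}$, that is $8n - 2 \le k$. For $k = 2$ this would force $8n \le 4$, impossible when $n \ge 1$; hence $\Sn^2$ has no first-type component, and since every bundle with a section is of first or second type by Corollary \ref{FeinbergCorollary}, the generic element of every component of $\Sn^2$ is of second type. For part (b), observe that when $d \ge 1$ the master estimate forces $(k-1)(k-2) \ge 2(2n-1)d > 0$ and hence $k \ge 3$; thus part (a) applies and $8n - 2 \le k$. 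Combining this with the Clifford bound $k \le \frac{d}{2}+1 \le \frac{g-2}{2}+1 = \frac{g}{2}$ (using $d \le g-2$) gives $8n - 2 \le \frac{g}{2}$, i.e.\ $n \le \frac{g+4}{16}$.

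The only step that is not pure bookkeeping is the estimate $\dim T_L B^k_{1,d} \le d - 2k + 2$, so that is where I expect the work to lie. It rests on the rank bound $\rank(\mu_0) \ge h^0(L) + h^1(L) - 1$, which one cannot extract merely by comparing the two ``cross'' subspaces $s\cdot H^0(KL^{-1})$ and $t\cdot H^0(L)$ of $\im(\mu_0)$ for fixed sections $s,t$ --- their intersection can strictly exceed the line $\langle st\rangle$ --- and which genuinely requires the Hopf/base-point-free-pencil argument taking all products into account at once.
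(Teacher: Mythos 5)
Your proof is correct and follows essentially the same route as the paper: the paper likewise plays the codimension bound of Proposition \ref{SchemeStructureSkW} (b) against Theorem \ref{FirstTypeBound} to obtain $2nd \le \dim T_{L_W} B^k_{1,d} + \frac{1}{2}k(k+1) - 1$, then bounds the tangent space by $\dim T_{L_W} B^k_{1,d} \le d - 2(k-1)$ (the paper cites Martens' theorem, noting it bounds tangent spaces; your Hopf/base-point-free-pencil estimate $\rank(\mu_0) \ge h^0(L) + h^1(L) - 1$ is exactly the standard proof of that tangent-space version), and finishes with Clifford's theorem and $d \le g-2$. The only organizational difference is in part (b), where the paper redoes the computation directly, $n \le \frac{(k-1)(k-2)}{4d} + \frac{1}{2} \le \frac{d-2}{16} + \frac{1}{2} \le \frac{g+4}{16}$, rather than deducing it from part (a) as you do, arriving at the same bound.
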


\begin{proof} (a) Let $W$ be a general point of a component as in the statement. As any component of $\Snk$ has codimension at most $\frac{1}{2}k(k+1)$, by Theorem \ref{FirstTypeBound} we have
\begin{equation} 2nd \ \le \ \dim \left( T_{L_W} B^k_{1,d} \right) + \frac{k(k+1)}{2} - 1 , \label{FirstTypeComponent} \end{equation}
By Martens' theorem \cite[p.\ 191 ff.]{ACGH}, and noting that the usual Martens bound is in fact a bound for $\dim \left( T_{L_W} B^k_{1,d} \right)$, we have $\dim \left( T_{L_W} B^k_{1,d} \right) \le d - 2(k - 1)$. Thus the above inequality becomes
\[ (2n - 1) d \ \le \ \frac{k(k+1)}{2} - 2k + 1 \ = \ \frac{(k-1)(k-2)}{2} . \]
By Clifford's theorem \cite[p.\ 107 ff.]{ACGH} applied to the line bundle $L_W$, we have $k \le \frac{d}{2} + 1$. Using this and the fact that $d \ne 0$ since $k = h^0 ( L_W ) \ge 2$, the above inequality becomes
\[ 2n - 1 \ \le \ \frac{\frac{d}{2} \cdot ( k-2 )}{2d} \ = \ \frac{k-2}{4}, \]
which gives 
 $8n - 2 \le k$, as desired.

(b) Suppose $X$ is a component as in the statement. As in part (a) we have the inequality (\ref{FirstTypeComponent}), which yields
\[ n \ \le \ \frac{(k - 1)(k + 2) + 2 \cdot \dim \left( T_{L_W} B^k_{1,d} \right) }{4d}. \]
By Martens' theorem as above, we obtain
\[ n \ \le \ \frac{(k - 1)(k + 2) - 4(k-1) + 2d}{4d} \ = \ \frac{(k-1)(k-2)}{4d} + \frac{1}{2} . \]
The above, by Clifford's theorem, becomes
\[ n \ \le \ \frac{\frac{d}{2} \cdot \left(\frac{d-2}{2} \right)}{4d} + \frac{1}{2} \ = \ \frac{d(d-2)}{16d} + \frac{1}{2} . \]
As $d \ne 0$, this simplifies to $n \le \frac{d - 2}{16} + \frac{1}{2}$. As $W$ is stable, $d \le g-2$, whence
\[ n \ \le \ \frac{g - 4}{16} + \frac{1}{2} \ = \ \frac{g + 4}{16} . \qedhere \]
\end{proof}

\subsection{Symplectic bundles of second type}

In \cite[Theorem 4]{Baj19}, the first author derived a bound on the dimension of the Brill--Noether locus $B^k_{2, K}$ of bundles of rank two and canonical determinant. As noted above, these are precisely the $K$-valued symplectic bundles of rank two. The following is a generalisation to symplectic bundles of higher rank, whose proof is similar.\\
\\
\textbf{Notation.} For the remainder of the paper, as we shall only consider symmetric Petri maps, we denote $\mus$ simply by $\mu$ to ease notation.

\begin{theorem} \label{SecondTypeBound} Let $k$ be an integer satisfying $1 \le k \le n(g+1) - 1$. Suppose $Y$ is an irreducible component of $\Snk$ containing a bundle $W$ of second type satisfying $h^0 ( W ) = k$ and such that the rank of the subbundle $E \subset W$ generated by global sections is $r$. Then
\begin{multline*} \dim (Y) \ \le \ \dim ( T_W Y ) \ \le \ \min \left\{ n(2n+1)(g-1) - ( 2k - 1 ) , \right. \\ \left. n(2n+1)(g-1) - k - \frac{1}{2} r(r-1) \right\} . \end{multline*}
\end{theorem}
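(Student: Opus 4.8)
The plan is to bound $\dim T_W Y$ through the tangent space of the ambient locus. Since $Y \subseteq \Snk$ is a component containing $W$, we have $\dim Y \le \dim T_W Y \le \dim T_W \Snk$. As $h^0(W) = k$, applying Proposition \ref{presLambda} with $\Lambda = H^0(W)$ gives $T_W \Snk = \im(\mu)^\perp$, and exactly as in the proof of Corollary \ref{SmoothCrit},
\[ \dim T_W \Snk \ = \ n(2n+1)(g-1) - \tfrac{1}{2}k(k+1) + \dim \Ker(\mu), \]
where $\mu \colon \Sym^2 H^0(W) \to H^0(\Sym^2 W)$ is the symmetrised Petri map. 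Thus the theorem reduces to the two upper bounds
\[ \dim \Ker(\mu) \ \le \ \tfrac{1}{2}(k-1)(k-2) \qquad\text{and}\qquad \dim \Ker(\mu) \ \le \ \tfrac{1}{2}k(k-1) - \tfrac{1}{2}r(r-1), \]
since substituting each into the displayed formula yields respectively the two quantities in the asserted minimum. I would prove each by exhibiting an explicit subspace $Z \subseteq \Sym^2 H^0(W)$ on which $\mu$ is injective: then $\Ker(\mu)$ injects into $\Sym^2 H^0(W)/Z$, giving $\dim \Ker(\mu) \le \tfrac{1}{2}k(k+1) - \dim Z$, and it suffices to take $\dim Z = 2k-1$ for the first bound and $\dim Z = k + \tfrac{1}{2}r(r-1)$ for the second.

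The elementary tool throughout is that for any \emph{nonzero} section $\sigma$ of a bundle $V$, the map $H^0(V) \to H^0(\Sym^2 V)$, $\tau \mapsto \mu(\sym(\sigma \otimes \tau))$, is injective: $\sigma$ is nonzero on a dense open set, and $v \odot w = 0$ with $v \ne 0$ forces $w = 0$ in a symmetric square. For the first bound I would use the second-type hypothesis. Let $L \subset W$ be a line subbundle with $h^0(L) = 1$, generated by $s \in H^0(W)$, and set $F := W/L$. Extending $s = \alpha_1$ to a basis $\alpha_1, \ldots, \alpha_k$ of $H^0(W)$, put
\[ Z_1 \ := \ \sym\bigl( s \otimes H^0(W) \bigr) \ \oplus \ \sym\bigl( \alpha_2 \otimes \langle \alpha_2, \ldots, \alpha_k \rangle \bigr), \]
of dimension $k + (k-1) = 2k-1$ (the two summands involve disjoint monomials). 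To check $\mu|_{Z_1}$ is injective, suppose $\mu(\sym(s \otimes t) + \sym(\alpha_2 \otimes t')) = 0$. Pushing forward along $\Sym^2 W \to \Sym^2 F$ annihilates the first term (as $s$ is a section of $L$) and sends the second to the product $\bar\alpha_2 \cdot \bar t'$ in $H^0(\Sym^2 F)$; since $\bar\alpha_2 \ne 0$ (because $\alpha_2 \notin H^0(L) = \langle s \rangle$), the elementary tool forces $\bar t' = 0$, so $t' \in \langle s \rangle \cap \langle \alpha_2, \ldots, \alpha_k \rangle = 0$, and then $\mu(\sym(s \otimes t)) = 0$ gives $t = 0$.

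For the second bound I would argue pointwise. Because $E$ is generated by the global sections of $W$ and $E \subseteq W$, we have $H^0(W) = H^0(E)$; since $\rank E = r$, I may choose $s_1, \ldots, s_r \in H^0(W)$ whose values span $E_p$ at a general point $p$, and these are independent and extend to a basis of $H^0(W)$. Put
\[ Z_2 \ := \ \sym\bigl( s_1 \otimes H^0(W) \bigr) \ \oplus \ \Sym^2 \langle s_2, \ldots, s_r \rangle, \]
of dimension $k + \tfrac{1}{2}r(r-1)$. If $\mu(\sym(s_1 \otimes t) + q) = 0$ with $q \in \Sym^2\langle s_2, \ldots, s_r\rangle$, I evaluate at a general $p$: as $t(p) \in E_p$ I may write $t(p) = \sum_l a_l(p)\, s_l(p)$, and expressing the vanishing of $\mu(\cdots)(p)$ in the basis $\{ s_i(p) \odot s_j(p) \}_{1 \le i \le j \le r}$ of $\Sym^2 E_p$ forces every $a_l(p) = 0$ and every (constant) coefficient of $q$ to vanish. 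Hence $t(p) = 0$ for general $p$, so $t = 0$, and $q = 0$; thus $\mu|_{Z_2}$ is injective. The two bounds follow, and taking the minimum completes the proof.

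The main obstacle is precisely the verification of injectivity on $Z_1$ and $Z_2$, where the two hypotheses enter in genuinely different ways: the first bound relies on $h^0(L) = 1$ (so that $\bar t' = 0$ pins $t'$ down to the single section $s$), while the second exploits the pointwise independence of $s_1(p), \ldots, s_r(p)$ coming from $\rank E = r$ together with the identity $H^0(W) = H^0(E)$ that guarantees $t(p) \in E_p$. Some care is also needed at the degenerate values (for instance $k = 1$, or $r = 1$ which would force first type and is excluded), but there the bounds collapse to the elementary $k=1$ divisor computation and are checked directly.
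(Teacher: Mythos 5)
Your proposal is correct, and while it rests on the same two elementary facts as the paper's proof, it organises them along a genuinely different route. The paper fixes the second-type sequence $0 \to \Oc(D) \to W \to F \to 0$ and performs two Snake Lemma reductions to trade $\Ker(\mu)$ for the kernel of the restricted map $\mu_3 \colon \Sym^2 V \to H^0(\Sym^2 F)$, where $V = \im \left( H^0(W) \to H^0(F) \right)$ has dimension $k-1$; it then invokes a stand-alone statement (Lemma \ref{SymPetriBound}) asserting $\dim \im (\mu_3) \ \ge \ \max\left\{ \tfrac{1}{2}m(m+1), \dim V \right\}$, where $m$ is the rank of the subbundle of $F$ generated by $V$ (implicitly $m = r-1$, which is how $\tfrac{1}{2}r(r-1)$ arises). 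You instead work entirely inside $\Sym^2 H^0(W)$, exhibiting explicit subspaces $Z_1, Z_2$ that meet $\Ker(\mu)$ trivially; your injectivity verifications use precisely the two inputs of Lemma \ref{SymPetriBound} --- multiplication by a nonzero section is injective into the symmetric square, and evaluation at a general point where the generating sections are independent --- so the numerical outcome is identical, but monomial bookkeeping replaces the diagram chases (your pushforward to $\Sym^2 F$ for $Z_1$ plays exactly the role of the paper's snake lemmas). Your organisation buys two things: it is more self-contained, and it makes visible that the bound $n(2n+1)(g-1) - k - \tfrac{1}{2}r(r-1)$ does not use the second-type hypothesis at all (that hypothesis enters only through $Z_1$), whereas the paper routes both bounds through the quotient $F$. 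The paper's organisation buys a cleanly quotable lemma on restricted symmetric Petri maps and an argument that visibly parallels the rank-two case of \cite{Baj19} which the theorem generalises. One small correction: your closing claim that $r = 1$ ``would force first type and is excluded'' is inaccurate --- the two types are not mutually exclusive, so $r=1$ can occur --- but this is harmless, since $Z_2$ degenerates gracefully (its $\Sym^2$-summand vanishes) and the bound persists.
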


\begin{proof} Let $W$ be a general element of $Y$. If $\mu \colon \Sym^2 H^0 ( W ) \to H^0 ( \Sym^2 W )$ is the Petri map of $W$, then
\begin{equation} \dim ( T_W Y ) \ = \ \dim ( \MS ) - \frac{1}{2}k(k+1) + \dim \Ker ( \mu ) . \label{BasicMuIneq} \end{equation}
We shall prove the theorem by finding a bound on $\dim \Ker ( \mu )$.

As $W$ is of second type, we may fix an exact sequence $0 \to \Oc (D) \to W \xrightarrow{q} F \to 0$, where $D$ is an effective divisor with $h^0 ( \Oc (D) ) = 1$. Now we have an exact commutative diagram
\[ \xymatrix{ 0 \ar[r] & \Sym^2 H^0 ( \Oc (D) ) \ar[r]^i \ar[d]_{\mu_1} & \Sym^2 H^0 ( W ) \ar[r]^j \ar[d]_\mu & \frac{\Sym^2 H^0 ( W )}{\Sym^2 H^0 ( \Oc (D) )} \ar[d]_{\mu_2} \ar[r] & 0 \\
 0 \ar[r] & H^0 ( \Oc ( 2D ) ) \ar[r] & H^0 ( \Sym^2 W ) \ar[r] & H^0 \left( \frac{\Sym^2 W}{\Oc ( 2D )} \right) . } \]
As $h^0 ( \Oc (D) ) = 1$, clearly $\mu_1$ is injective. 
Thus, by the Snake Lemma,
\begin{equation} \dim \Ker ( \mu ) \ \le \ \dim \Ker ( \mu_2 ) . \label{FirstMuIneq} \end{equation}

Next, write $V$ for the image of $q \colon H^0 ( W ) \to H^0 ( F )$. There is a commutative diagram with exact rows
\[ \xymatrix{ 0 \ar[r] & V \otimes H^0 ( \Oc (D) ) \ar[r]^\iota \ar[d]^\gamma & \frac{\Sym^2 H^0 ( W )}{\Sym^2 H^0 ( \Oc (D) )} \ar[r] \ar[d]_{\mu_2} & \Sym^2 V \ar[d]_{\mu_3} \ar[r] & 0 \\
0 \ar[r] & H^0 ( F(D) ) \ar[r] & H^0 \left( \frac{\Sym^2 W}{\Oc (2D)} \right) \ar[r] & H^0 ( \Sym^2 F ) & } \]
Here $\gamma$ is the multiplication map on sections, and $\iota$ is induced by $\sym \colon H^0 ( W ) \otimes H^0 ( \Oc (D) ) \to \Sym^2 H^0 ( W )$.
As $D$ is effective and $h^0 ( \Oc (D) ) = 1$, the map $\gamma$ is injective. Hence by the Snake Lemma and (\ref{FirstMuIneq}) we have
\begin{equation} \dim \Ker ( \mu ) \ \le \ \dim \Ker ( \mu_3 ) . \label{SecondMuIneq} \end{equation}
Therefore by Lemma \ref{SymPetriBound} below, $\dim \Ker ( \mu )$ is bounded above by
\begin{multline*} \min \left\{ \frac{1}{2}k(k-1) - \frac{1}{2} r(r-1) , \frac{1}{2}k(k-1) - (k-1) \right\} \ = \\
\min \left\{ \frac{1}{2}k(k+1) - \left( k + \frac{1}{2} r(r-1) \right) , \frac{1}{2}k(k+1) - (2k-1) \right\} \end{multline*}
The theorem now follows from (\ref{BasicMuIneq}). \end{proof}

\begin{lemma} \label{SymPetriBound} Let $F$ be any vector bundle, and $V$ a nonzero subspace of $H^0 ( F )$. Let $E$ be the subbundle of $F$ generated by $V$, and write $m := \rank(E)$. Let $\mu_3 \colon \Sym^2 V \to H^0 ( \Sym^2 F )$ be the restriction of the symmetric Petri map of $F$. Then
\[ \dim \im ( \mu_3 ) \ \ge \ \max \left\{ \frac{1}{2} m(m+1) , \dim (V) \right\} . \] \end{lemma}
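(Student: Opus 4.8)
The plan is to prove the two lower bounds $\dim\im(\mu_3)\ge \dim(V)$ and $\dim\im(\mu_3)\ge \tfrac12 m(m+1)$ independently, each by exhibiting an explicit linearly independent family inside the image; since both are lower bounds, their maximum bounds $\dim\im(\mu_3)$ from below. In both cases the argument is essentially pointwise, reducing to elementary linear algebra in the fibre $\Sym^2 F_p$ at a sufficiently general point $p$.

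For the bound $\dim\im(\mu_3)\ge \dim(V)$, I would fix a nonzero section $s\in V$ and consider the composite $V\to\Sym^2 V\xrightarrow{\mu_3}H^0(\Sym^2 F)$ sending $t\mapsto \mu_3(s\cdot t)$, which concretely is the section $p\mapsto s(p)\cdot t(p)\in\Sym^2 F_p$. The key observation is that the symmetric algebra $\Sym^\bullet F_p$ of the fibre is a polynomial ring, hence an integral domain, so at a general point $p$ (where $s(p)\ne 0$) the vanishing $s(p)\cdot t(p)=0$ forces $t(p)=0$. Since this holds on a dense open set and $t$ is a section, $\mu_3(s\cdot t)=0$ implies $t=0$. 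Thus the composite is injective and its image, which lies in $\im(\mu_3)$, has dimension $\dim(V)$.

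For the bound $\dim\im(\mu_3)\ge\tfrac12 m(m+1)$, I would use that $E$ is the subbundle \emph{generated} by $V$, so at a general point $p$ the evaluation map $V\to F_p$ has image the fibre $E_p$, of dimension $m$. Choosing $v_1,\ldots,v_m\in V$ whose values at $p$ form a basis of $E_p$, the $\tfrac12 m(m+1)$ elements $v_iv_j$ ($i\le j$) of $\Sym^2 V$ map under $\mu_3$ to sections whose values at $p$ are the products $v_i(p)v_j(p)\in\Sym^2 F_p$. As products of linearly independent linear forms are linearly independent in the symmetric square, these sections have linearly independent values at the single point $p$, hence are linearly independent, giving the bound.

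Combining the two bounds yields the claim. I do not expect either step to present a serious obstacle; the only points requiring a little care are the passage from ``general point'' to ``dense open set'' — one must choose $p$ simultaneously away from the zero locus of $s$ and away from the finitely many points where $V$ fails to generate $E$ — together with the structural fact, powering the first bound, that the symmetric algebra of a vector space has no zero divisors among its linear forms.
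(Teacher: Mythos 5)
Your proposal is correct and follows essentially the same strategy as the paper: both bounds are obtained by exhibiting explicit injective restrictions of $\mu_3$, and your $\frac{1}{2}m(m+1)$ bound (evaluation of products of a spanning set at a general point) is exactly the paper's argument. The only difference is in implementation of the $\dim(V)$ bound: the paper globalises the multiplication-by-$t$ map via the line subbundle $L$ generated by $t$ and the injectivity of the bundle map $L \otimes F \to \Sym^2 F$ (using $(L \otimes F) \cap \wedge^2 F = 0$), whereas you argue fibrewise using that $\Sym^\bullet F_p$ is an integral domain together with density of the locus where $s(p) \neq 0$ — the same underlying fact, carried out pointwise.
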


\begin{proof} Let $\Lambda \subset V$ be a subspace of dimension $m$ which generically generates $E$. Then for generic $p \in C$, the composed map
\[ \Sym^2 \Lambda \ \xrightarrow{\mu_V} H^0 ( \Sym^2 F ) \ \xrightarrow{\ev} \ \Sym^2 F|_p \]
is an isomorphism. Thus $\dim \im ( \mu_3 ) \ge \rank ( \Sym^2 E ) = \frac{1}{2} m (m+1)$.

For the rest: Choose any nonzero $t \in V$, and write $L$ for the line subbundle generated by $t$. There is a commutative diagram
\[ \xymatrix{ \K \cdot t \otimes V \ar@^{(->}[r] \ar@_{(->}[dr] & H^0 ( L ) \otimes V \ar[r]^\sym \ar[d] & \Sym^2 V \ar[d]_{\mu_3} \\
 & H^0 ( L \otimes F ) \ar@^{(->}[r]^\Sigma & H^0 ( \Sym^2 F ) , } \]
where $\Sigma \colon F \otimes F \to \Sym^2 F$ is the canonical surjection. Since $\dim ( \K \cdot t ) = 1$, the top row is injective. On the other hand, since $L$ has rank one, $(L \otimes F) \cap \wedge^2 F = 0$. Thus $\Sigma$ is induced by an injective bundle map, and so is injective. By commutativity, the restriction of $\mu_3$ to $\sym ( \K \cdot t \otimes V )$ is injective. Thus $\dim \im ( \mu_3 ) \ge \dim (V)$. \end{proof}

\begin{remark} We mention some special cases. If $h^0 ( W ) = 1$, then $W$ is both of first and of second type, and Theorems \ref{FirstTypeBound} and \ref{SecondTypeBound} both confirm that $\Sn^1$ is a generically smooth reduced divisor. More generally, if $k = r$, then $W$ belongs to a unique component of $\Snk$ which is generically smooth and of the expected dimension. \end{remark}

\section{Nonemptiness of symplectic Brill--Noether loci} \label{Nonemptiness}

In this section, we shall prove nonemptiness of $\Snk$ for certain values of $g$, $n$ and $k$. We use a combination of techniques from \cite{Mer} and \cite{CH3}. In {\S\S} \ref{MercatBundles} and \ref{ExtGeom} we recall or prove the necessary ingredients, and then proceed to the questions of nonemptiness and smoothness of $\Snk$.

\subsection{Mercat's construction} \label{MercatBundles}

Here we recall and further analyse the bundles constructed on \cite[p.\ 76]{Mer} as elementary transformations of sums of line bundles. Let $C$ be any curve of genus $g \ge 3$. As in the introduction, set
\[ k_0 \ := \ \max\{ k \ge 0 : \beta^k_{1, g-1} > 0 \} . \]
Fix $n \ge 1$. By definition of $k_0$, the Brill--Noether locus $B^{k_0}_{1, g-1}$ is of positive dimension. Let $L_1 , \ldots , L_n$ be general elements of $B^{k_0}_{1, g-1}$, in particular such that
\[ L_1 , \ldots , L_n , K L_1^{-1} , \ldots , K L_n^{-1} \]
are mutually nonisomorphic. Choose any point $x \in C$. Let $E$ be an elementary transformation
\begin{equation} 0 \ \to \ E \ \to \ \bigoplus_{i=1}^n L_i \ \to \ \cO_x \ \to \ 0 \label{defnE} \end{equation}
which is general in the sense that no $L_i|_x$ is contained in $E$. One checks using \cite[p.\ 79]{Mer} that such an $E$ is stable.
 Hence $K \otimes E^*$ is also stable, so any proper subbundle has slope at most $g-1$. In fact we shall require the following stronger statement.

\begin{lemma} \label{FinitelyManyLineSubbs} Suppose $n \ge 2$. Let $E$ be as in (\ref{defnE}).
\begin{enumerate}
\item[(a)] Any slope $g-1$ subbundle of $K \otimes E^*$ contains a line subbundle of degree $g-1$.
\item[(b)] The bundle $K \otimes E^*$ contains a finite number of line subbundles of degree $g-1$.
\end{enumerate}
\end{lemma}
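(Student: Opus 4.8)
The plan is to dualise the defining sequence of $E$ and reduce both statements to the combinatorics of line subbundles of a direct sum of line bundles. Applying $\mathcal{H}om(-,K)$ to (\ref{defnE}) and using $\mathcal{E}xt^1(\cO_x, K) \cong \cO_x$ gives
\[ 0 \to P \to F \to \cO_x \to 0, \qquad P := \bigoplus_{i=1}^n KL_i^{-1}, \quad F := K \otimes E^*, \]
an elementary modification at $x$. Here the $KL_i^{-1}$ are mutually non-isomorphic of degree $g-1$, $P$ is polystable of slope $g-1$, $F$ is stable of slope $(g-1)+\frac1n$, and $P \subseteq F \subseteq P(x)$. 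Dually to the hypothesis that no $L_i|_x$ lies in $E$, the modification direction $b = (b_1, \ldots, b_n) \in P|_x$ has all $b_i \ne 0$; this is the only genericity I will use from the construction of $E$, apart from the $L_i$ being general.

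I would establish (b) first, since it is cleaner and its mechanism feeds into (a). Let $M \subseteq F$ be a line subbundle of degree $g-1$ and compose with $F \to \cO_x$. If this is zero then $M \subseteq P$; as $M$ has the slope of the polystable $P$ and the summands are distinct, $M = KL_i^{-1}$ for some $i$, and each such is saturated in $F$ because its fibre $e_i$ is not proportional to $b$ (all $b_j \ne 0$). This gives exactly the $n$ summands. If instead $M \to \cO_x$ is surjective, then $M' := M \cap P = M(-x)$ is a degree $g-2$ line subbundle of $P$. Being saturated, $M'$ has at least two non-zero projections $M' \to KL_i^{-1}$, $M' \to KL_j^{-1}$, whence $M' \cong KL_i^{-1}(-q_i) \cong KL_j^{-1}(-q_j)$ and so $L_jL_i^{-1} \cong \Oc(q_i - q_j)$. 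As $L_i \not\cong L_j$ this is a non-zero class in the image of the difference morphism $C \times C \to \Pic^0(C)$, whose fibres over non-zero classes are finite (a curve of genus $\ge 2$ has trivial translation stabiliser in its Jacobian). Hence there are finitely many such $M'$, each saturating to at most one $M$ in $F$, proving (b).

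For (a) I would induct on $n$, the case $n = 2$ being trivial (a proper slope $g-1$ subbundle then has rank one and is itself a degree $g-1$ line subbundle). Let $G \subseteq F$ be a slope $g-1$ subbundle of rank $s \le n-1$ and intersect with $P$. If $G \subseteq P$, then $G$ is a subbundle of the polystable $P$ of equal slope, hence a sub-sum $\bigoplus_{i \in S} KL_i^{-1}$, which contains a degree $g-1$ line subbundle. Otherwise $G + P = F$ and $G' := G \cap P$ is a saturated rank $s$ subbundle of $P$ of degree $s(g-1)-1$ with $b \in G'|_x$. If $G$ contains a summand $KL_{i_0}^{-1}$ we are done; if not, then $G \cap KL_{i_0}^{-1} = 0$, so $G$ injects into $F/KL_{i_0}^{-1}$, which is again a Mercat-type bundle of rank $n-1$ (distinct summands, non-zero direction), stable by Mercat's criterion. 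For $s < n-1$ a degree count against stability shows the image is a saturated slope $g-1$ subbundle; the inductive hypothesis produces a degree $g-1$ line subbundle of the image, which pulls back through the isomorphism between $G$ and its image to a degree $g-1$ line subbundle of $G$.

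The main obstacle is the remaining case $s = n-1$, that is $G = \Ker(F \xrightarrow{\phi} Q)$ for a degree $g$ line bundle quotient $Q$: here the projection away from a summand yields a full-rank colength-one subsheaf, so the induction does not apply directly. Now $KL_{i_0}^{-1} \subseteq G$ iff the composite $KL_{i_0}^{-1} \to Q$ vanishes; if every such composite is non-zero then $Q \cong KL_i^{-1}(r_i)$ for all $i$, forcing $L_jL_i^{-1} \in C - C$ for every pair and hence, by the finiteness argument of (b), confining $Q$ to finitely many classes. I would dispose of these finitely many problematic $G$ by exhibiting instead a degree $g-1$ line subbundle surjecting onto $\cO_x$ (a subbundle of the second kind found in (b), lying inside $G$), using $b \in G'|_x$; alternatively, by choosing the general $L_i$ so that some pair satisfies $L_jL_i^{-1} \notin C-C$, making this case vacuous. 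Pinning down this interaction between degree $g$ quotients of $F$ and the difference surface $C - C$ — and confirming that the requisite genericity of the $L_i$ is available for every curve of genus $g \ge 3$ — is where I expect the real work to lie.
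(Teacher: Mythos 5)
Your proposal has the right architecture, but it contains two genuine gaps, one in each part.

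In part (b), the inference ``hence there are finitely many such $M'$'' is not valid as stated. What you actually establish is that the \emph{isomorphism class} of $M' = M(-x)$ lies in a finite set, and that is strictly weaker than finiteness of the set of subsheaves: for a fixed class $N \cong KL_i^{-1}(-q_i) \cong KL_j^{-1}(-q_j)$, the subsheaves of $P$ isomorphic to $N$ form a positive-dimensional family inside $\PP \Hom (N, P)$. Concretely, if $\phi_i \colon N \to KL_i^{-1}$ and $\phi_j \colon N \to KL_j^{-1}$ are the nonzero maps, the images of $(\phi_i, \lambda \phi_j) \colon N \to KL_i^{-1} \oplus KL_j^{-1} \subseteq P$ for $\lambda \in \K^*$ are pairwise distinct, all saturated in $P$ (when $q_i \ne q_j$), and all satisfy every constraint you impose; so your stated conditions admit infinitely many $M'$. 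The missing ingredient is the one condition you never use in this half of the argument: $M = M'(x)$ must lie inside $F$, i.e.\ the image of the fibre map $M'|_x \to P|_x$ must equal the modification line $\K b$. Since every $b_l \ne 0$, this forces \emph{every} projection $M' \to KL_l^{-1}$ to be nonzero and nonvanishing at $x$, and it pins down the scalars in $(c_1\phi_1, \ldots, c_n\phi_n)$ up to a common factor, hence determines $M'$ uniquely within its isomorphism class. With that inserted your count closes, and the resulting argument is genuinely different from the paper's, which instead inducts on $n$ (any $M$ is either $KL_n^{-1}$ or a line subbundle of the rank-$(n-1)$ Mercat bundle $K \otimes E_n^*$, and liftings are killed by $\Hom(M, KL_n^{-1}) = 0$), with the base case $n = 2$ supplied by Lange--Narasimhan; your route, once repaired, avoids both the induction and that citation.

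In part (a), your strategy is essentially the paper's — quotient by a summand, force saturation of the image by a slope comparison against stability, and induct — and the case $s = n-1$ that you flag as the main obstacle is a real, unresolved gap: the image of such a $G$ in $F/KL_{i_0}^{-1}$ is a full-rank, colength-one subsheaf, which is never saturated, so the inductive hypothesis cannot be applied. Neither of your proposed repairs is carried out. The genericity route needs $B^{k_0}_{1, g-1} - B^{k_0}_{1,g-1} \not\subseteq C - C$; this can be argued when $\dim B^{k_0}_{1,g-1} \ge 2$ (containment would force $C - C$ to be a translate of a component of $B^{k_0}_{1,g-1}$ and ultimately an abelian subvariety, impossible for $g \ge 3$), but when that locus is a curve the containment is not excluded by any argument you give, and the lemma is asserted for every curve of genus $\ge 3$. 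The alternative idea of exhibiting a second-kind line subbundle inside $G$ is not developed at all. For what it is worth, your diagnosis is sharper than the published text: the paper's own proof passes over this case by asserting that the image $F_2 \subseteq K \otimes E_n^*$ ``must be saturated'' by stability, a justification that is only valid for $\rank F_2 \le n - 2$. So you have correctly located the delicate point of the lemma, but your proposal, like the paper's sketch at that point, does not close it.
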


\begin{proof} We use induction on $n$. Firstly, suppose $n = 2$. Recall that the Segre invariant $s_1 ( K \otimes E^* )$ is defined as
\[ \min\{ \deg ( K \otimes E^* ) - 2 \deg (M) : M \hbox{ a line subbundle of } K \otimes E^* \} . \]
As $K L_i^{-1}$ is clearly a maximal line subbundle of $K \otimes E^*$, we have $s_1 ( K \otimes E^* ) 
 = 1$. Then parts (a) and (b) both follow from \cite[Proposition 4.2]{LNar}.

Now suppose $n \ge 3$. We have a diagram
\[ \xymatrix{ 0 \ar[r] & E_n \ar[r] \ar[d] & E \ar[r] \ar[d] & L_n \ar[r] \ar[d]^= & 0 \\
 0 \ar[r] & \bigoplus_{i=1}^{n-1} L_i \ar[r] \ar[d] & \bigoplus_{i=1}^n L_i \ar[r] \ar[d] & L_n \ar[r] & 0 \\
 & \cO_x \ar[r]^= & \cO_x & & } \]
where $E_n$ has rank $n-1$ and degree $(n-1)(g-1) - 1$. Since no $L_i$ is contained in $E$, in particular no $L_i$ is contained in $E_n$. Thus, by induction we may assume that statements (a) and (b) hold for $K \otimes E_n^*$.

We now prove part (a). Suppose $F$ is a slope $g-1$ subbundle of $K \otimes E^*$. We have a diagram of sheaves
\begin{equation} \xymatrix{ 0 \ar[r] & K L_n^{-1} \ar[r] & K \otimes E^* \ar[r] & K \otimes E_n^* \ar[r] & 0 \\
 0 \ar[r] & F_1 \ar[r] \ar[u] & F \ar[r] \ar[u] & F_2 \ar[r] \ar[u] & 0 } \label{IndDiag} \end{equation}
where $F_1$ is the sheaf-theoretic intersection of $F$ and $K L_n^{-1}$. If $F_1 \ne 0$ then $F_1 = K L_n^{-1}$ and we are done. If $F_1 = 0$ then $F \cong F_2$ is a slope $g-1$ subsheaf of $K \otimes E_n^*$. Since the latter is stable of slope $g-1 + \frac{1}{\rank(E_n)}$, in fact $F_2$ must be saturated; that is, a subbundle. By induction, $F \cong F_2$ contains a line subbundle of degree $g - 1$. This proves (a).

As for (b): By the top row of (\ref{IndDiag}), any degree $g-1$ line subbundle $M \subset K \otimes E^*$ is either $K L_n^{-1}$ or is a subbundle of $K \otimes E_n^*$. By induction, we may assume there are at most finitely many degree $g-1$ subbundles of $K \otimes E_n^*$. For a fixed such subbundle $M$, the set of liftings of $M$ to $K \otimes E^*$ is a pseudotorsor over $H^0 ( \Hom ( M, K L_n^{-1} ))$. Since the $L_i$ are chosen generally from the positive dimensional locus $B^{k_0}_{1, g-1}$, perturbing $L_n$ if necessary we can assume that $K L_n^{-1} \not\cong M$, so $h^0 ( \Hom ( M, K L_n^{-1} )) = 0$. Statement (b) follows. \end{proof}

\subsection{Symplectic extensions} \label{ExtGeom}

In this subsection we shall recall a method for constructing symplectic bundles as extensions, together with a geometric criterion for liftings in such extensions.

\begin{criterion} \label{SympExtCriterion} Let $C$ be a curve, and let $E$ be a simple vector bundle over $C$. An extension
\begin{equation} 0 \ \to \ E \ \to \ W \ \to \ K \otimes E^* \ \to \ 0 \label{SympExt} \end{equation}
admits a $K$-valued symplectic form with respect to which $E$ is isotropic if and only if the extension class $\delta (W)$ belongs to $H^1 ( C, K^{-1} \otimes \Sym^2 E )$. \end{criterion}

\begin{proof} This is a special case of \cite[Criterion 2.1]{Hit1}. \end{proof}

Let us now recall some geometric objects living naturally in the projectivised extension space $\PP H^1 ( K^{-1} \otimes E \otimes E )$. Let $V$ be any vector bundle over $C$ with $h^1 ( V ) \ne 0$. Write $\pi \colon \PP V \to C$ for the projection. Via Serre duality and the projection formula, there is a canonical identification
\[ \PP H^1 ( V ) \ \isom \ | \cO_{\PP V} (1) \otimes \pi^* K |^* . \]
Hence there is a natural map $\psi \colon \PP V \dashrightarrow \PP H^1 ( V )$ with nondegenerate image. Let us recall a useful way to realise this map fibrewise.

\begin{lemma} \label{AlternativePsi} On a fibre $\PP V|_y$, the map $\psi$ can be identified with the projectivised coboundary map of the sequence
\[ H^0 ( V ) \ \to \ H^0 ( C, V (y) ) \ \to \ V(y)|_y \ \to \ H^1 ( V ) \to \cdots . \]
\end{lemma}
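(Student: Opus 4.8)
The plan is to trace through what the map $\psi$ does on a single fibre and match it against the coboundary map. Recall the setup: we have the canonical identification $\PP H^1(V) \isom |\cO_{\PP V}(1) \otimes \pi^* K|^*$, so a point of $\PP H^1(V)$ is (up to scalar) a linear functional on the space of sections $H^0(\PP V, \cO_{\PP V}(1) \otimes \pi^* K)$. By the projection formula together with $(\pi)_* \cO_{\PP V}(1) = V^*$, this space of sections is canonically $H^0(C, K \otimes V^*)$, and Serre duality recovers the pairing with $H^1(V)$. The map $\psi$ sends a point $v \in \PP V|_y$ (a line in the fibre $V|_y$) to the functional on $H^0(K \otimes V^*)$ given by evaluating a section $\phi \in H^0(K \otimes V^*)$ at $y$ and then pairing the resulting element of $(K \otimes V^*)|_y = K|_y \otimes (V|_y)^*$ against $v$, landing in $K|_y$, and finally taking a residue (or using the canonical trace $K|_y \to \K$ implicit in Serre duality) to get a scalar.

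First I would fix the point $y \in C$ and the short exact sequence of sheaves
\[ 0 \ \to \ V \ \to \ V(y) \ \to \ V(y)|_y \ \to \ 0, \]
which is the source of the coboundary map $\partial \colon V(y)|_y \to H^1(V)$ appearing in the long exact sequence displayed in the statement. Note that $V(y)|_y$ is canonically $V|_y \otimes K^{-1}|_y$ (since $\cO(y)|_y \cong T_y C = K^{-1}|_y$), so a line $v \in \PP V|_y$ determines a line in $V(y)|_y$ after tensoring with the nonzero space $K^{-1}|_y$; this is exactly what lets me view $v$ as an element on which $\partial$ can act, and the projectivised coboundary $\PP(\partial)$ is therefore a rational map $\PP V|_y \dashrightarrow \PP H^1(V)$.

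The core step is to verify that these two maps $\PP V|_y \dashrightarrow \PP H^1(V)$ agree. I would do this by computing both sides against an arbitrary test functional $\phi \in H^0(K \otimes V^*)$, i.e.\ by checking that $\langle \phi, \psi(v) \rangle = \langle \phi, \partial(\bar v) \rangle$ for all $\phi$, where $\bar v$ is the image of $v$ in $V(y)|_y$. For the left side this is the definition of $\psi$ unwound above: pair $\phi(y)$ with $v$ and trace. For the right side I would use the standard description of Serre duality as a residue pairing: $\partial(\bar v)$ is represented by a $\check{\mathrm C}$ech or Dolbeault cocycle supported near $y$ coming from a local lift of $\bar v$ to $V(y)$, and its Serre pairing with $\phi$ is computed as $\Res_y$ of the local product $\phi \cdot (\text{lift})$, which is a section of $K$ with a simple pole at $y$. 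A direct local calculation in a coordinate $z$ centred at $y$, writing the lift as $\tfrac{1}{z}\tilde v$ with $\tilde v(y) = v$, shows the residue equals precisely the trace of $\langle \phi(y), v\rangle$. This identifies the two pairings for every $\phi$, hence the two maps coincide projectively.

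The main obstacle I anticipate is bookkeeping the several canonical identifications consistently --- in particular the twist by $K^{-1}|_y$ hidden in $V(y)|_y \cong V|_y \otimes K^{-1}|_y$ and the compatibility of the residue in the coboundary computation with the Serre duality pairing that defines the embedding $\PP H^1(V) \isom |\cO_{\PP V}(1)\otimes \pi^* K|^*$. Getting the residue/trace normalisation to land on the nose (rather than off by the twisting factor or a scalar) is where care is needed; but since both constructions are ultimately the \emph{same} Serre duality residue pairing viewed from two angles, no genuine difficulty should arise, and the identification follows once the local computation at $y$ is carried out.
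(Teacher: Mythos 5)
Your proof is correct and is essentially the paper's own (unwritten) argument: the paper disposes of this lemma with ``follows by direct calculation, or from the discussion in \cite[pp.\ 469--470]{CH1}'', and your residue computation pairing the coboundary class $\partial(\bar v)$ against test sections $\phi \in H^0(K \otimes V^*)$ is exactly that direct calculation, carried out with the correct handling of the projectively harmless twist $V(y)|_y \cong V|_y \otimes K^{-1}|_y$ and the scalar ambiguities in the Serre duality normalisation.
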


\begin{proof} This follows by direct calculation, or from the discussion on \cite[pp.\ 469--470]{CH1}. \end{proof}

Now set $V = K^{-1} \otimes E \otimes E$. We shall recall a result from \cite{CH1} relating the geometry of $\psi ( \PP ( E \otimes E ) )$ and liftings of subsheaves of $K \otimes E^*$ to extensions of the form (\ref{SympExt}), in the spirit of \cite[Proposition 1.1]{LNar}. Let $e_1, \ldots , e_m$ be points of $E$ lying over distinct points $y_1, \ldots , y_m \in C$. These define an elementary transformation
\[ 0 \ \to \ F_{e_1 , \ldots , e_m} \ \to \ K \otimes E^* \ \to \ \bigoplus_{l = 1}^m \cO_{y_l} \ \to \ 0 . \]

\begin{proposition} \label{LiftingCriterion} With $E$ and $F := F_{e_1 , \ldots , e_m}$ as above, let $0 \to E \to W \to K \otimes E^* \to 0$ be an extension of class $\delta (W) \in \PP H^1 ( K^{-1} \otimes E \otimes E )$. Then $F$ lifts to $W$ if and only if $\delta (W)$ belongs to the secant spanned by $\psi ( e_1 \otimes f_1 ) , \ldots , \psi ( e_m \otimes f_m )$ for some nonzero $f_1 \in E|_{y_1} , \ldots , f_m \in E|_{y_m}$. \end{proposition}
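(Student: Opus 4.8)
The plan is to reduce the lifting condition to the vanishing of a restricted extension class, and then to identify that restriction geometrically with the secant by means of Lemma \ref{AlternativePsi}.

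First I would record the standard principle governing liftings. Writing $\iota \colon F \hookrightarrow K \otimes E^*$ for the inclusion, pulling back the extension $0 \to E \to W \to K \otimes E^* \to 0$ along $\iota$ produces an extension $0 \to E \to W_F \to F \to 0$ whose class is $\iota^* \delta(W)$, where $\iota^* \colon \mathrm{Ext}^1(K \otimes E^*, E) \to \mathrm{Ext}^1(F, E)$. A lifting of $F$ to $W$ is precisely a splitting of $W_F \to F$, so $F$ lifts if and only if $\iota^* \delta(W) = 0$; that is, $\delta(W) \in \Ker(\iota^*)$. Next, applying $\Hom(-, E)$ to the defining sequence $0 \to F \xrightarrow{\iota} K \otimes E^* \xrightarrow{q} \bigoplus_{l} \cO_{y_l} \to 0$ yields the exact sequence
\[ \mathrm{Ext}^1\Big( \textstyle\bigoplus_l \cO_{y_l}, E \Big) \ \xrightarrow{q^*}\ \mathrm{Ext}^1(K \otimes E^*, E) \ \xrightarrow{\iota^*}\ \mathrm{Ext}^1(F, E) , \]
so that $\Ker(\iota^*) = \im(q^*)$. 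Since the $y_l$ are distinct, $\mathrm{Ext}^1(\bigoplus_l \cO_{y_l}, E) = \bigoplus_l \mathrm{Ext}^1(\cO_{y_l}, E)$, and each summand is canonically $E(y_l)|_{y_l}$ via the cohomology sequence of $0 \to E \to E(y_l) \to E(y_l)|_{y_l} \to 0$. Hence $\Ker(\iota^*) = \sum_l \im(q_l^*)$, where $q_l^* \colon E(y_l)|_{y_l} \to H^1(K^{-1} \otimes E \otimes E)$ is the contribution of $y_l$.

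The heart of the argument, and the step I expect to be most delicate, is to identify each $q_l^*$ with the coboundary underlying $\psi$. Set $V = K^{-1} \otimes E \otimes E$, the sheaf of homomorphisms $K \otimes E^* \to E$. I would show that, under the identification $\mathrm{Ext}^1(\cO_{y_l}, E) \cong E(y_l)|_{y_l}$, the Yoneda map $q_l^*$ agrees, up to the twist by the line $K^{-1}(y_l)|_{y_l}$, with the assignment $f_l \mapsto \partial_V(e_l \otimes f_l)$, where $\partial_V \colon V(y_l)|_{y_l} \to H^1(V)$ is exactly the coboundary realising $\psi$ fibrewise in Lemma \ref{AlternativePsi}. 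Concretely, the surjection $q$ is the contraction $\beta \mapsto \langle \beta, e_l \rangle$ against $e_l$, so the Hecke modification $F \subset K \otimes E^*$ alters homomorphisms into $E$ near $y_l$ precisely in the $e_l$-direction; tracing this through the connecting homomorphism realises $\im(q_l^*)$ as the cone over $\{ \psi(e_l \otimes f_l) : f_l \in E|_{y_l} \}$. The main obstacle here is bookkeeping: matching the abstract Yoneda connecting map with the geometric coboundary $\psi$ while keeping careful track of the $\cO(y_l)$-twists and the identification $\mathrm{Ext}^1(\cO_{y_l}, E) \cong E(y_l)|_{y_l}$.

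Granting this identification, $\Ker(\iota^*) = \sum_l \im(q_l^*)$ is the linear span of these cones, and projectivising gives
\[ \delta(W) \in \Ker(\iota^*) \iff [\delta(W)] \in \bigcup \big\langle \psi(e_1 \otimes f_1), \ldots, \psi(e_m \otimes f_m) \big\rangle , \]
the union being taken over all nonzero $f_1 \in E|_{y_1}, \ldots, f_m \in E|_{y_m}$, since the relation $\lambda\, \partial_V(e_l \otimes f_l) = \partial_V(e_l \otimes \lambda f_l)$ lets one absorb secant coefficients into the $f_l$. Combined with the first paragraph, this is exactly the claimed equivalence. As the statement is attributed to \cite{CH1}, I would conclude by checking that this matches the cited result and invoke it for the final twist bookkeeping.
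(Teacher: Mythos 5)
Your proposal is correct, and its opening reduction --- $F$ lifts to $W$ if and only if $\delta(W)$ lies in the kernel of the restriction map $H^1(K^{-1}\otimes E\otimes E)\to H^1(F^*\otimes E)$ --- is exactly the paper's first step. The difference is in what comes next: the paper simply cites \cite[Lemma 4.3 (ii)]{CH1} for the identification of this kernel with the span of the linear spaces $\psi\left( \PP ( \K \cdot e_l \otimes K^{-1}\otimes E ) \right)$, adding only a parenthetical remark that the degree hypotheses of that lemma are irrelevant here (they serve solely to make $\psi$ an embedding), whereas you prove the identification. Your route --- applying $\Hom(-,E)$ to $0\to F\to K\otimes E^*\to \bigoplus_l \cO_{y_l}\to 0$ to get $\Ker(\iota^*)=\im (q^*)$, and then matching each Yoneda map $q_l^*$ on $\mathrm{Ext}^1(\cO_{y_l},E)\cong E(y_l)|_{y_l}$ with the coboundary realising $\psi$ fibrewise in Lemma \ref{AlternativePsi} --- is sound. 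Indeed, writing $V=K^{-1}\otimes E\otimes E$, one has sheaf inclusions $V\subset F^*\otimes E\subset V(y_1+\cdots+y_m)$ with $\left( F^*\otimes E \right) / V \cong \bigoplus_l \K e_l\otimes (K^{-1}\otimes E)(y_l)|_{y_l}$ inside $\bigoplus_l V(y_l)|_{y_l}$, and functoriality of connecting homomorphisms then yields precisely your claim; the one-dimensional twists you flag only rescale sources of linear maps and so do not affect images, and the absorption of secant coefficients into the $f_l$ finishes the equivalence, exactly as you say. So your argument is a self-contained replacement for the citation: the paper's version buys brevity, while yours buys independence from \cite{CH1} and makes transparent why no degree assumption on $E$ is needed --- a point the paper can only assert. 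Your closing fallback (checking against the cited result) is in effect what the paper does, so either way the proof stands.
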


\begin{proof} Let $\beta \colon H^1 ( K^{-1} \otimes E \otimes E ) \to H^1 ( F^* \otimes E )$ be the induced map on cohomology. Then $F$ lifts to an extension $W$ if and only if $\delta(W) \in \Ker ( \beta )$. By \cite[Lemma 4.3 (ii)]{CH1}, the space $\Ker ( \beta )$ is exactly the span of the projective linear spaces $\psi \left( \PP ( \K \cdot e_l \otimes K^{-1} \otimes E ) \right)$ for $1 \le l \le m$. (Note that the assumption on the degrees in \cite{CH1} is made solely to ensure that $\psi$ be an embedding, which we do not require in the present situation.) \end{proof}

Next, as in \cite[{\S} 2.2]{CH3}, composing $\psi$ with the relative Segre embedding, we obtain a map
\begin{equation} \psis \colon \PP E \ \hookrightarrow \ \PP ( \Sym^2 E ) \ \dashrightarrow \ \PP H^1 ( K^{-1} \otimes \Sym^2 E ) \label{defnPsis} \end{equation}
with nondegenerate image. Note that $\psis (e) = \psi ( e \otimes e )$. We remark that $\psis$ is the map associated to
\[ |\cO_{\PP E} ( 2 ) \otimes \pi^* K^2 |^* \ \cong \ \PP H^0 ( K^2 \otimes \Sym^2 E^* )^* \ \cong \ \PP H^1 ( K^{-1} \otimes \Sym^2 E ) . \]

\subsection{The construction} \label{Construction}

Suppose $g \ge 3$ and $n \ge 1$. Let $L_1 , \ldots , L_n$ and $E$ be as defined in {\S} \ref{MercatBundles}. Let $e_1, e_2$ be general points of $\PP E$ lying over distinct $y_1, y_2 \in C$ respectively. Let
\begin{equation} 0 \ \to \ E \ \to \ W \ \to \ K \otimes E^* \ \to \ 0 \label{defnW} \end{equation}
be a nontrivial extension such that $\delta (W)$ is a general point of the line spanned by $\psis (e_1)$ and $\psis (e_2)$. As $\delta( W ) \in H^1 ( K^{-1} \otimes \Sym^2 E )$, by Criterion \ref{SympExtCriterion} there is a $K$-valued symplectic structure on $W$.

\begin{proposition} The bundle $W$ is stable as a vector bundle. \label{WStable} \end{proposition}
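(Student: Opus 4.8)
The plan is to bound the slope of an arbitrary proper subbundle $F\subset W$ and show it is strictly less than $\mu(W)=g-1$. Write $f=\rank F$, let $F_1\subseteq E$ be the saturation of $F\cap E$ (of rank $a$) and $F_2\subseteq K\otimes E^*$ the saturation of the image of $F$ (of rank $b=f-a$). Since $E$ is stable of slope $g-1-\frac1n$ and $K\otimes E^*$ is stable of slope $g-1+\frac1n$, a short numerical computation (using integrality of the degrees of subsheaves) gives $\deg F\le\deg F_1+\deg F_2\le f(g-1)$, with equality only in two borderline configurations: (A) $a=0$ and $1\le b\le n-1$, with $F_2$ of slope exactly $g-1$ lifting isomorphically into $W$; or (B) $0<a<n$ and $F_2=K\otimes E^*$, with $F$ surjecting onto the quotient. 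The remaining equality configuration $a=0$, $b=n$ would force $K\otimes E^*$ to split off $W$, contradicting $\delta(W)\ne 0$. Thus it remains to exclude (A) and (B).

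Since $E$ is Lagrangian (Criterion \ref{SympExtCriterion}), I would next use the symplectic form to reduce (B) to (A). For $F$ as in (B) of slope exactly $g-1$, its orthogonal complement $F^\perp$ again has slope $g-1$ and has rank $n-a$; moreover $F^\perp\cap E=(F+E)^\perp=W^\perp=0$, because $F$ surjecting onto $K\otimes E^*$ forces $F+E=W$. Hence $F^\perp$ injects into $K\otimes E^*$ with image of slope $g-1$, i.e.\ $F^\perp$ is a subbundle of type (A). So it suffices to rule out (A).

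To exclude (A): a type-(A) subbundle provides a subbundle $F_2\subseteq K\otimes E^*$ of slope exactly $g-1$ which lifts to $W$. By Lemma \ref{FinitelyManyLineSubbs}(a), $F_2$ contains a line subbundle $M\subset K\otimes E^*$ of degree $g-1$, which therefore also lifts to $W$; and by Lemma \ref{FinitelyManyLineSubbs}(b) there are only finitely many such $M$ (here $n\ge 2$). For a fixed $M$, the statement ``$M$ lifts to $W$'' means precisely that $\delta(W)$ lies in the subspace $\Gamma_M:=\Ker\big(H^1(K^{-1}\otimes\Sym^2 E)\to H^1(M^{-1}\otimes E)\big)$, the restriction to the symmetric part of the coboundary along $\iota\colon M\hookrightarrow K\otimes E^*$. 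The crucial point is that each $\Gamma_M$ is a \emph{proper} subspace: applying Lemma \ref{jstar} with $V=E$ shows that $\iota^*$ stays surjective on $K^{-1}\otimes\Sym^2 E$, and since $H^2$ vanishes on a curve this yields a surjection $H^1(K^{-1}\otimes\Sym^2 E)\twoheadrightarrow H^1(M^{-1}\otimes E)$ onto a nonzero space, so $\Gamma_M\subsetneq H^1(K^{-1}\otimes\Sym^2 E)$.

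Finally I would invoke the genericity of $\delta(W)$. As $\psis$ has nondegenerate image, $\psis(\PP E)$ lies in no proper linear subspace, so for general $e_1$ we have $\psis(e_1)\notin\Gamma_M$ for each of the finitely many $M$; hence the line $\langle\psis(e_1),\psis(e_2)\rangle$ lies in no $\Gamma_M$, meets each in at most a point, and its general member $\delta(W)$ avoids $\bigcup_M\Gamma_M$. This contradicts the existence of a lifting, ruling out (A) and completing the argument. The main obstacle is exactly the properness of the $\Gamma_M$, i.e.\ the nonvanishing of the lifting map \emph{restricted to the symmetric part} $H^1(K^{-1}\otimes\Sym^2 E)$; this is precisely what Lemma \ref{jstar} delivers. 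I note that for $n=1$ the finiteness in Lemma \ref{FinitelyManyLineSubbs} is unavailable and the bad locus becomes the nondegenerate image $\psis(\PP E)=\psis(C)$ itself, so one instead uses that a general point of a secant line of a nondegenerate curve does not lie on the curve — which is the genuine reason the construction places $\delta(W)$ on the secant line through the two points $\psis(e_1),\psis(e_2)$.
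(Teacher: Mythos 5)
Your numerical case analysis, the orthogonal-complement reduction of (B) to (A), and the exclusion of (A) via Lemmas \ref{FinitelyManyLineSubbs} and \ref{jstar} plus genericity of $\delta(W)$ are all sound, and for case (A) your argument is essentially the paper's. But there is a genuine gap at the configuration you dismiss in one line: $a=0$, $b=n$, $\deg F = n(g-1)$. Such an $F$ is a rank-$n$ subbundle of $W$ with $F\cap E=0$ mapping isomorphically onto a \emph{proper} subsheaf $F_2\subset K\otimes E^*$ of degree $n(g-1)$, i.e.\ onto a one-point elementary transformation $0\to F_2\to K\otimes E^*\to \cO_y\to 0$ (recall $\deg(K\otimes E^*)=n(g-1)+1$). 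The existence of such an $F$ says only that $\delta(W)$ dies under $H^1(K^{-1}\otimes E\otimes E)\to H^1(F_2^*\otimes E)$; it does \emph{not} force $K\otimes E^*$ to split off $W$, so your claim that this case contradicts $\delta(W)\neq 0$ is false. (Splitting would require $F\cong K\otimes E^*$ itself, which has slope $g-1+\frac{1}{n}$, not $g-1$.)

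Excluding precisely this case is the bulk of the paper's proof, and it is genuinely harder than case (A) because the bad locus is not a finite union of linear subspaces coming from finitely many line subbundles. By Proposition \ref{LiftingCriterion}, a one-point elementary transformation $F_{e'}$ lifts to $W$ if and only if $\delta(W)$ lies over $\psi(e'\otimes f)$ for some $f$ in the same fibre of $E$, so the bad locus is the cone over $\psi(\Delta)$, where $\Delta=\PP E\times_C\PP(K^{-1}\otimes E)$ is the whole rank-one locus. The paper splits this into the diagonal part $\psis(\PP E)$ — avoided because $\Sec^2(\psis(\PP E))$ strictly contains $\psis(\PP E)$, so a general point of a general secant line is off the image (this is the argument you correctly identify, but it is needed for \emph{all} $n$, not only $n=1$) — and the off-diagonal part $\psi(e\otimes f)$ with $e,f$ independent. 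For the latter, one must show that $\psi(e\otimes f)$ can land in the symmetric subspace $\PP H^1(K^{-1}\otimes\Sym^2 E)$ only for finitely many fibres: via Lemma \ref{AlternativePsi} this amounts to the effectivity of the degree-one line bundles $K^{-1}L_iL_j(y)$ for only finitely many $y$ (Lemma \ref{technical}), after which the bad locus is confined to a finite union of linear subspaces of dimension at most $n^2$, which is less than $h^1(K^{-1}\otimes\Sym^2 E)=\frac{1}{2}n(n+1)(g-1)+n+1$. None of this machinery appears in your proposal for $n\ge 2$; your closing remark handles only $n=1$, where $\Delta$ degenerates to $\psis(C)$. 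As written, the proof is incomplete exactly where the real work lies.
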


\begin{proof} The following uses ideas from \cite[{\S} 3]{CH3} and \cite[Lemma 7]{HP}. As every proper subbundle of $K \otimes E^*$ has slope at most $g-1$, and the extension $W$ is nontrivial, it is not hard to see that any subbundle of $W$ has slope at most $g-1$. Thus we need only to exclude the existence of a subbundle of slope $g-1$.

Furthermore, for any proper subbundle $F \subset W_1$, we have a short exact sequence $0 \to F^\perp \to W \to F^* \otimes K \to 0$ where $F^\perp$ is the orthogonal complement of $F$ with respect to the bilinear form. An easy computation shows that
\[ \mu ( F^\perp ) \ = \ (g-1) + \frac{\rank (F)}{2n - \rank(F)} \left( \mu (F) - (g-1) \right) . \]
Hence $\mu (F) \ge (g-1)$ if and only if $\mu ( F^\perp ) \ge (g-1)$. As $\rank(F^\perp) = 2n - \rank(F)$, to prove stability of $W$ it suffices to exclude the existence of subbundles of slope $g-1$ and rank at most $n$.

Let $F \subset W$ be a subbundle of rank at most $n$. Then there is a sheaf diagram
\[ \xymatrix{ 0 \ar[r] & F_1 \ar[r] \ar[d] & F \ar[r] \ar[d] & F_2 \ar[r] \ar[d] & 0 \\
 0 \ar[r] & E \ar[r] & W \ar[r] & K \otimes E^* \ar[r] & 0, } \]
where $F_1$ is a subbundle of $E$ and $F_2$ a subsheaf of $K \otimes E^*$. For $j = 1, 2$ write $r_j := \rank (F_j)$. 
 As $\mu ( F_2 ) < g-1 + \frac{1}{n}$, in fact $\mu (F_2) \le g-1$. Therefore, if $r_1 \ne 0$ then
\[ \mu (F) \ \le \ \frac{ r_1 \cdot \mu (E) + r_2 \cdot (g-1)}{r_1 + r_2} \ < \ g-1 . \]
Thus we may assume that $r_1 = 0$ and $F \cong F_2$ is a subsheaf of $K \otimes E^*$.

If $r_2 < n$, then by Lemma \ref{FinitelyManyLineSubbs} (a) we may assume $n \ge 2$ and $r_2 = 1$. Let $\iota \colon M \to K \otimes E^*$ be a line subbundle of degree $g-1$. Then $\iota$ lifts to a map $M \to W$ if and only if
\[ \delta (W) \ \in \ \Ker \left( \iota^* \colon H^1 ( C, K^{-1} \otimes E \otimes E ) \ \to \ H^1 ( C, M^{-1} \otimes E ) ) \right) . \]
As $H^1 ( C, M^{-1} \otimes E ) )$ is nonzero, by Lemma \ref{jstar}, the restriction of $\iota^*$ to $H^1 ( K^{-1} \otimes \Sym^2 E )$ is nonzero. Furthermore, by Lemma \ref{FinitelyManyLineSubbs} (b), there are only finitely many possibilities for $\iota$. We conclude that the locus of extensions in $H^1 ( K^{-1} \otimes \Sym^2 E )$ admitting a lifting of some such $\iota \colon M \to K \otimes E^*$ is a finite union of proper linear subspaces. Since
\[ \psis ( \PP E ) \ \subset \ \PP H^1 ( K^{-1} \otimes \Sym^2 E) \ \cong | \cO_{\PP E} (2) \otimes \pi^* K^2 |^* \ \]
is nondegenerate and $\delta(W)$ is a general point of a general 2-secant to $\psis ( \PP E )$, we may assume that $\delta (W)$ does not belong to any of these proper linear subspaces.

Finally, we must exclude a lifting of some $F_2$ of rank $r_2 = n \ge 1$; that is, an elementary transformation $0 \to F_2 \to K \otimes E^* \to \cO_y \to 0$. By Proposition \ref{LiftingCriterion}, such a lifting exists only if $\delta (W)$ belongs to $\psi ( \Delta )$, where
\[ \Delta \ := \PP E \times_C \PP (K^{-1} \otimes E) \]
is the rank one locus of $\PP ( K^{-1} \otimes E \otimes E )$.

 Now $h^0 ( K^{-1} \otimes \Sym^2 E ) = 0$ since $E$ is stable of slope $< g-1$. Hence by Riemann--Roch,
\[ h^1 ( K^{-1} \otimes \Sym^2 E ) \ = \ \frac{1}{2}n(n+1)(g-1) + n + 1 . \]
One checks easily that for $g \ge 3$, this is greater than $\dim ( \PP E ) + 1 = n + 1$, so $\psis ( \PP E )$ is a proper subvariety of $\PP H^1 ( K^{-1} \otimes \Sym^2 E )$.
 It follows that the secant variety $\Sec^2 ( \psis ( \PP E ) )$ strictly contains $\psis ( \PP E )$. Hence, since the points $e_1, e_2$ were chosen generally and $\delta (W)$ is general in the line $\overline{\psis (e_1) \psis (e_2)}$, we may assume $\delta(W) \not\in \psis ( \PP E )$. Thus $\delta(W)$ belongs to $\psi (\Delta)$ only if $\psi ( e \otimes f ) \in \PP H^1 ( K^{-1} \otimes \Sym^2 E )$ for some independent $e, f$ in some fibre $E|_y$. In view of Lemma \ref{AlternativePsi} and the diagram
\[ \xymatrix{ H^0 ( K^{-1} \otimes E \otimes E (y) ) \ar[r] & K^{-1} \otimes E \otimes E (y)|_y \ar[r] & H^1 ( K^{-1} \otimes E \otimes E ) \\
 & & H^0 ( K^{-1} \otimes \Sym^2 E ) \ar[u] } \]
this happens if and only if there is a global section $\alpha$ of $K^{-1} \otimes E \otimes E (y)$ with value $\frac{1}{2} ( e \otimes f - f \otimes e )$ at $y$. 
 We claim that such an $\alpha$ can exist for at most finitely many $y$. Since $K^{-1} \otimes E \otimes E (y)$ is a subsheaf of $\bigoplus_{i, j} K^{-1} L_i L_j (y)$, it suffices to show that for each $i, j$ the degree $1$ line bundle $K^{-1} L_i L_j (y)$ is effective for at most finitely many $y \in C$. This follows from Lemma \ref{technical} below.

Therefore, writing $\Delta'$ for the complement of the relative diagonal $\PP E \subset \Delta$, the intersection of $\psi ( \Delta' )$ with $H^1 ( K^{-1} \otimes \Sym^2 E )$ is contained in at most a finite number of fibres $\Delta'|_y$. As the linear span of $\psi ( \Delta'|_y )$ is $\psi ( \PP ( K^{-1} \otimes E \otimes E )|_y )$, we conclude that the locus of extensions in $H^1 ( K^{-1} \otimes \Sym^2 E )$ lying over $\psi ( \Delta' )$ is contained in a finite union of linear subspaces of dimension at most $n^2$. 
Again, one computes using $g \ge 3$ that $h^1 ( K^{-1} \otimes \Sym^2 E ) > n^2$.
 Thus the locus of symplectic extensions (\ref{SympExt}) admitting a lifting of an elementary transformation $F_2 \subset K \otimes E^*$ with $\deg \left( \frac{K \otimes E^*}{F_2} \right) = 1$ is contained in a finite union of proper linear subspaces. As above, by nondegeneracy of $\psis ( \PP E )$ we can assume that $W$ does not admit such a lifting. This completes the proof that $W$ is stable as a vector bundle. \end{proof}

\begin{lemma} \label{technical} Let $C$ be a curve of genus $g \ge 2$. Let $M$ be a nontrivial line bundle of degree zero. Then $h^0 ( M (x) ) = 0$ for general $x \in C$. \end{lemma}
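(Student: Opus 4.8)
The plan is to show that the locus $Z := \{ x \in C : h^0 ( M(x) ) > 0 \}$ is finite; since $C$ is an irreducible curve, this is precisely the assertion that $h^0 ( M(x) ) = 0$ for general $x$. The starting observation is that $M(x)$ has degree $1$, and on a curve of genus $g \ge 2$ a line bundle of degree $1$ has $h^0 \le 1$ (a degree-one pencil would exhibit $C$ as $\PP^1$). Hence for $x \in Z$ the bundle $M(x)$ is effective with $h^0 ( M(x) ) = 1$, so there is a \emph{unique} point $p_x \in C$ with $M(x) \cong \Oc ( p_x )$; equivalently, $M \cong \Oc ( p_x - x )$.

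Suppose for contradiction that $Z$ is infinite, and choose three distinct points $x_1, x_2, x_3 \in Z$ with associated points $p_1, p_2, p_3$. For any pair $x_i \ne x_j$ in $Z$ we have $\Oc ( p_i + x_j ) \cong M \otimes \Oc ( x_i + x_j ) \cong \Oc ( p_j + x_i )$, so the effective degree-two divisors $p_i + x_j$ and $p_j + x_i$ are linearly equivalent. First I would treat the case where they are \emph{equal} as divisors: comparing the two-element multisets $\{ p_i, x_j \} = \{ p_j, x_i \}$ and using $x_i \ne x_j$ forces $p_i = x_i$, whence $M \cong \Oc$, contradicting nontriviality of $M$.

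In the remaining case the two divisors are distinct but linearly equivalent, so $h^0 ( \Oc ( p_i + x_j ) ) \ge 2$ and $C$ carries a $g^1_2$; thus $C$ is hyperelliptic, and for $g \ge 2$ this pencil is unique and base-point-free, with hyperelliptic involution $h$. Since the unique member of the pencil through $x_j$ is $x_j + h ( x_j )$, the relation $p_i + x_j \sim g^1_2$ forces $p_i = h ( x_j )$. Applying this to the pairs $( x_1, x_2 )$ and $( x_1, x_3 )$ gives $p_1 = h ( x_2 ) = h ( x_3 )$, hence $x_2 = x_3$, a contradiction. In either case we reach a contradiction, so $Z$ is finite and the lemma follows.

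I expect the only delicate point to be the hyperelliptic case: one must record that a nontrivial degree-two linear system makes $C$ hyperelliptic, that for $g \ge 2$ the $g^1_2$ is unique and base-point-free, and that the equivalence $p + x \sim g^1_2$ then pins down $p$ as the conjugate $h(x)$. Everything else is a direct manipulation of effective divisors of degrees one and two, together with the elementary bound $h^0 \le 1$ for degree-one bundles, which is where the hypothesis $g \ge 2$ genuinely enters (the statement indeed fails for $g = 1$).
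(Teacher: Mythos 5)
Your proof is correct, but it takes a genuinely different route from the paper's. Both arguments hinge on the same elementary relation: writing $M \cong \Oc(p_x - x)$ at points $x$ of the bad locus (using $h^0 \le 1$ for degree-one bundles when $g \ge 2$) and comparing two such points to get $p_x + y \sim p_y + x$. From there the paths diverge. The paper assumes the bad locus is all of $C$ (the negation of the statement, since the bad locus is closed in the irreducible curve $C$), packages the assignment $x \mapsto p_x$ into a fixed-point-free automorphism $\alpha \in \Aut(C)$ with $M \cong \Oc(\alpha(x)-x)$, and deduces that a \emph{general} effective divisor of degree two moves in a pencil, which it asserts forces $g \le 1$. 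You instead take only three points $x_1, x_2, x_3$ of the bad locus and argue pairwise: if $p_i + x_j$ and $p_j + x_i$ coincide as divisors then $M \cong \Oc$, contradicting nontriviality; if they are distinct then $C$ is hyperelliptic, and uniqueness plus base-point-freeness of the $g^1_2$ pins down $p_1 = h(x_2) = h(x_3)$, forcing $x_2 = x_3$. Your version buys several things: it avoids constructing the automorphism and avoids the paper's final ``easy to see'' step (that mobility of the general degree-two divisor forces $g \le 1$), replacing it with the standard, citable uniqueness of the hyperelliptic pencil; it needs no closedness argument for the bad locus; and it actually proves the sharper quantitative statement that at most two points $x$ can have $h^0(M(x)) > 0$. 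The paper's version is more compact on the page but leaves more implicit; yours isolates the hyperelliptic case explicitly, which is exactly where the delicacy lies.
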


\begin{proof} If $h^0 ( M (x) ) \ge 1$ for general $x \in C$, then for each $x$ we have $M = \Oc(\alpha(x) - x)$ for some $\alpha \in \Aut(C)$.
 As $M \not\cong \Oc$, in fact $\alpha$ has no fixed points. Then for any $x, y \in C$ we have $M = \Oc(\alpha(x) - x) = \Oc(\alpha(y) - y)$, whence $\Oc ( \alpha(x) + y ) = \Oc ( x + \alpha (y) )$. For $y \ne x$, the divisors $\alpha(x) + y$ and $x + \alpha (y)$ are distinct and linearly equivalent divisors. But it is easy to see that this implies that a general divisor of degree two on $C$ moves in a linear series, which can happen only if $g \le 1$.
\end{proof}

\begin{theorem} \label{SnkNonempty} Let $C$ be a curve of genus $g \ge 3$, and let $k_0$ be as defined in (\ref{Defnko}). For each $n \ge 1$ and for $0 \le k \le 2nk_0 - 3$, the locus $\Snk$ has a component which is nonempty and of codimension at most $\frac{1}{2}k(k+1)$. \end{theorem}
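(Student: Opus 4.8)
The plan is to show that the stable symplectic bundle $W$ constructed in {\S}\ref{Construction} already carries enough sections. By Proposition \ref{WStable} the bundle $W$ is stable, and since its extension class satisfies $\delta(W) \in H^1(K^{-1}\otimes \Sym^2 E)$, it is $K$-valued symplectic by Criterion \ref{SympExtCriterion}; hence $W$ defines a point of $\MS$. As every component of $\Snk$ has codimension at most $\frac12 k(k+1)$ by Proposition \ref{SchemeStructureSkW}(b), the whole statement reduces to the single estimate $h^0(W) \ge 2nk_0 - 3$, for then $W \in \Snk$ for every $k \le 2nk_0 - 3$.

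First I would estimate the sections of the sub- and quotient bundles. From the defining sequence (\ref{defnE}) one has $H^0(E) = \Ker\big(\bigoplus_i H^0(L_i) \to H^0(\cO_x) = \K\big)$, which imposes at most one linear condition, so $h^0(E) \ge nk_0 - 1$ using $h^0(L_i) \ge k_0$. Dualising (\ref{defnE}) and twisting by $K$ gives $0 \to \bigoplus_i KL_i^{-1} \to K\otimes E^* \to \cO_x \to 0$, and since $\chi(L_i) = 0$ we have $h^0(KL_i^{-1}) = h^1(L_i) = h^0(L_i) \ge k_0$ by Serre duality; taking sections then yields $h^0(K\otimes E^*) \ge nk_0$. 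The long exact sequence of (\ref{defnW}) gives
\[ h^0(W) \ = \ h^0(E) + h^0(K\otimes E^*) - \rank(\partial), \]
where $\partial \colon H^0(K\otimes E^*) \to H^1(E)$ is the connecting map, namely cup product with $\delta(W) \in H^1(\Hom(K\otimes E^*, E)) = H^1(K^{-1}\otimes E\otimes E)$.

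The heart of the argument is the bound $\rank(\partial) \le 2$. By construction $\delta(W)$ lies on the line spanned by $\psis(e_1) = \psi(e_1\otimes e_1)$ and $\psis(e_2) = \psi(e_2\otimes e_2)$, so it suffices to show that cup product with each $\psi(e_i\otimes e_i)$ has rank at most one. Using Lemma \ref{AlternativePsi}, the class $\psi(e_i\otimes e_i)$ is the image under the coboundary map at $y_i$ of the skyscraper vector $e_i\otimes e_i$; composing this coboundary with the contraction $\Hom(K\otimes E^*, E)\otimes(K\otimes E^*) \to E$ shows that $\partial(s)$ depends on $s$ only through the scalars $s(y_1)(e_1)$ and $s(y_2)(e_2)$, where $s(y_i)\in (K\otimes E^*)|_{y_i}$ is evaluated on $e_i\in E|_{y_i}$. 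Hence $\partial$ factors through the two-dimensional space $K|_{y_1}\oplus K|_{y_2}$, so $\rank(\partial)\le 2$. Combining the three estimates gives $h^0(W) \ge (nk_0 - 1) + nk_0 - 2 = 2nk_0 - 3$, as required.

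The main obstacle is precisely this rank computation: one must make rigorous that cup product with the ``almost split'' class $\delta(W)$, supported geometrically at the two points $\psis(e_1),\psis(e_2)$ of $\psis(\PP E)$, factors through evaluation at $y_1,y_2$. This is exactly where the choice of $\delta(W)$ as a general point of a $2$-secant line to $\psis(\PP E)$ does double duty: it simultaneously forces the stability of $W$ (Proposition \ref{WStable}) and ensures that only two independent sections of $K\otimes E^*$ fail to lift, keeping $h^0(W)$ as close as possible to the value $2nk_0 - 1$ attained by the split extension.
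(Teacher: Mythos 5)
Your proposal is correct, and its skeleton coincides with the paper's: same bundle $W$ from (\ref{defnW}), same inputs (Proposition \ref{WStable}, Criterion \ref{SympExtCriterion}, Proposition \ref{SchemeStructureSkW}(b)), same reduction of everything to the bound $h^0(W) \ge 2nk_0 - 3$, and the counts $h^0(E) \ge nk_0 - 1$, $h^0(K \otimes E^*) \ge nk_0$ are right. Where you genuinely diverge is the key step. The paper invokes Proposition \ref{LiftingCriterion}: since $\delta(W)$ lies on the secant spanned by $\psis(e_1)$ and $\psis(e_2)$, the elementary transformation $F_{e_1,e_2} \subset K \otimes E^*$ lifts to a subbundle $F \subset W$, whence $h^0(W) \ge h^0(E) + h^0(F) = (nk_0 - 1) + (nk_0 - 2)$, using that $e_1, e_2$ are general and $K \otimes E^*$ is generically generated. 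You instead bound the rank of the connecting map $\partial = \cup\, \delta(W)$ by $2$, via Lemma \ref{AlternativePsi} together with the naturality of cup product with respect to the contraction $\Hom(K \otimes E^*, E) \otimes (K \otimes E^*) \to E$; writing $\delta(W)$ as a combination of lifts of $\psis(e_1), \psis(e_2)$, each summand cups with $s$ to the functional $s \mapsto s(y_i)(e_i)$ times a fixed class in $H^1(E)$, so $\rank(\partial) \le 2$. This is sound, and it is in fact precisely the mechanism the paper deploys later, in the proof of Theorem \ref{GenSmoothExpDim}, where the commutative diagram involving $\varepsilon$ shows that $\cup\, \delta(W)$ has rank exactly $2$ (giving $H^0(W) = H^0(E) \oplus H^0(F)$, needed there for Petri injectivity). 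The two counts agree because $\Ker(\partial)$ is exactly the space of sections of $K \otimes E^*$ lifting to $W$, and your factorization shows $\Ker(\partial) \supseteq H^0(F_{e_1, e_2})$. What each approach buys: the paper's route produces the subbundle $F$ itself, which is reused in the smoothness argument of {\S} \ref{Smoothness}; your route is more self-contained (it bypasses \cite[Lemma 4.3]{CH1}, on which Proposition \ref{LiftingCriterion} rests) and, pushed slightly further as in Theorem \ref{GenSmoothExpDim}, yields the equality $h^0(W) = 2nk_0 - 3$ for general choices rather than just the inequality.
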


\begin{proof} Let $W$ be the $K$-valued symplectic bundle constructed in (\ref{defnW}), which is stable by Proposition \ref{WStable}. By Proposition \ref{LiftingCriterion}, the elementary transformation
\[ 0 \ \to \ F_{e_1, e_2} \ \to K \otimes E^* \ \xrightarrow{e_1, e_2} \ \cO_{y_1} \oplus \cO_{y_2} \ \to \ 0 \]
lifts to a subsheaf $F$ of $W$ (which is in fact a subbundle, as $W$ is stable). Since $e_1$ and $e_2$ are general and $K \otimes E^*$ is generically generated, we may assume $h^0 ( F ) = nk_0 - 2$. Hence $h^0 ( W ) \ge h^0 ( E ) + h^0 ( F ) = 2nk_0 - 3$ and $W$ defines a point of $\Snk$. In particular, $\Snk$ is nonempty. By Proposition \ref{SchemeStructure} (b), each component is of codimension at most $\frac{1}{2}k(k+1)$. \end{proof}

\begin{remark} \label{Semistable} If one allows strictly semistable symplectic bundles, it is easy to give examples of $K$-valued symplectic bundles with larger $h^0$ over any curve. Set
\[ k_1 \ := \ \max\{ h^0 ( L ) : L \in \Pic^{g-1} (C) \} . \]
Let $L_1 , \ldots , L_n$ be (not necessarily pairwise nonisomorphic) line bundles of degree $g-1$ with $h^0 ( L_i ) \ge k_1$. Then the direct sum
\[ W \ := \ \bigoplus \left( L_i \oplus K L_i^{-1} \right) \]
endowed with the sum of the standard skewsymmetric forms on the $L_i \oplus K L_i^{-1}$
 is semistable (but not stable) $K$-valued symplectic of rank $2n$ with $h^0 ( W ) = 2nk_1 > 2nk_0 - 3$. \end{remark}

\subsection{Smoothness} \label{Smoothness}

Now we shall prove that if $C$ is a general Petri curve, the component of $\Snk$ whose existence was shown above is smooth and of the expected codimension $\frac{1}{2}k(k+1)$. We shall require the following lemma, whose proof is straightforward.

\begin{lemma} \label{PetriInjSum} Let $V$ be a vector bundle. Suppose $F_1 , \ldots , F_m$ are sheaves such that $\bigoplus_{i=1}^m F_i$ is a subsheaf of $V$ with $H^0 ( V ) = \bigoplus_{i=1}^m H^0 ( F_i )$. Suppose that the multiplication maps
\[ H^0 ( F_i ) \otimes H^0 ( F_j ) \ \to \ H^0 ( F_i \otimes F_j ) \quad \hbox{and} \quad \Sym^2 H^0 ( F_i ) \ \to \ H^0 ( \Sym^2 F_i ) \]
are injective for $1 \le i \le j \le m$. Then the Petri map $\Sym^2 H^0 ( V ) \to H^0 ( \Sym^2 V )$ is injective. \end{lemma}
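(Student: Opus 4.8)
The plan is to exploit the two direct-sum decompositions induced by the hypothesis $H^0 ( V ) = \bigoplus_{i=1}^m H^0 ( F_i )$ and to show that the Petri map is block-diagonal with respect to them, each block being one of the maps assumed injective. Writing $G := \bigoplus_{i=1}^m F_i$, in characteristic zero the functor $\Sym^2$ splits $\otimes^2$ off as a direct summand, so
\[ \Sym^2 H^0 ( V ) \ = \ \bigoplus_{i=1}^m \Sym^2 H^0 ( F_i ) \ \oplus \ \bigoplus_{i<j} \left( H^0 ( F_i ) \otimes H^0 ( F_j ) \right) , \]
and correspondingly $\Sym^2 G = \bigoplus_i \Sym^2 F_i \oplus \bigoplus_{i<j} ( F_i \otimes F_j )$, whence
\[ H^0 ( \Sym^2 G ) \ = \ \bigoplus_{i=1}^m H^0 ( \Sym^2 F_i ) \ \oplus \ \bigoplus_{i<j} H^0 ( F_i \otimes F_j ) . \]

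First I would check that $\mu$ respects these decompositions. For $s, t \in H^0 ( F_i )$ the element $\sym ( s \otimes t )$ maps to a section of $\Sym^2 F_i$, so $\mu$ carries $\Sym^2 H^0 ( F_i )$ into $H^0 ( \Sym^2 F_i )$ via the given symmetric multiplication map; for $s \in H^0 ( F_i )$ and $t \in H^0 ( F_j )$ with $i \neq j$, the symmetrization $\sym ( s \otimes t )$ maps isomorphically onto the $F_i \otimes F_j$ summand of $\Sym^2 G$, so $\mu$ carries $H^0 ( F_i ) \otimes H^0 ( F_j )$ into $H^0 ( F_i \otimes F_j )$ via the given multiplication map. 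Thus, composed with the natural map $H^0 ( \Sym^2 G ) \to H^0 ( \Sym^2 V )$, the Petri map $\mu$ is precisely the direct sum of the maps appearing in the statement, all of which are injective by hypothesis.

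The one point requiring care, and the \emph{crux} of the argument, is that $H^0 ( \Sym^2 G ) \to H^0 ( \Sym^2 V )$ is injective, so that vanishing of $\mu ( \xi )$ in $H^0 ( \Sym^2 V )$ forces the vanishing of each of its components in the summands above. By left-exactness of $H^0$, this reduces to showing that $\Sym^2 G \to \Sym^2 V$ is injective as a map of sheaves. Here I would use that over the smooth curve $C$ the subsheaf $G \subseteq V$ is torsion-free, hence locally free; local freeness (flatness) makes $G \otimes G \to V \otimes V$ injective, and since $\mathrm{char}\,\K = 0$ the decomposition $\otimes^2 = \Sym^2 \oplus \wedge^2$ is functorial, so this injection restricts to an injection $\Sym^2 G \hookrightarrow \Sym^2 V$. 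Granting this, if $\mu ( \xi ) = 0$ then each block-component of $\xi$ is sent to zero by an injective map, hence vanishes, and therefore $\xi = 0$; so $\mu$ is injective. The entire argument is formal once the sheaf-level injectivity of $\Sym^2 G \hookrightarrow \Sym^2 V$ is secured, which is the only step where the hypotheses on $C$ (smoothness and characteristic zero) are genuinely used.
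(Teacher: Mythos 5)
Your proof is correct and takes essentially the same approach as the paper's own (omitted, declared ``straightforward'') argument: decompose $\Sym^2 H^0 ( V )$ into the blocks $\Sym^2 H^0 ( F_i )$ and $H^0 ( F_i ) \otimes H^0 ( F_j )$, observe that the Petri map is block-diagonal with blocks the hypothesised injective multiplication maps, and conclude. Your extra step verifying that $H^0 ( \Sym^2 G ) \to H^0 ( \Sym^2 V )$ is injective (via torsion-freeness, hence local freeness, of $G \subseteq V$ on a smooth curve, and the characteristic-zero splitting of $\otimes^2$) is precisely the point the paper leaves implicit, and you handle it correctly.
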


\begin{theorem} \label{GenSmoothExpDim} Let $C$ be a general Petri curve of genus $g \ge 3$. Then for $n \ge 2$ and $k \le 2nk_0 - 3$, the locus $\Snk$ has a component which is generically smooth and of the expected dimension. \end{theorem}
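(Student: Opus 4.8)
The plan is to realise the component through the construction of \S\ref{Construction} and to verify smoothness via the desingularisation of \S\ref{sectionDeSing}, so that one never needs to compute $h^0(W)$ exactly. By Lemma \ref{OneImpliesMore} it is enough to treat the top value $k = 2nk_0 - 3$. Let $W$ be the stable $K$-valued symplectic bundle of (\ref{defnW}), built from $E$ and from the lift $F = F_{e_1,e_2} \subset W$ furnished by Proposition \ref{LiftingCriterion}. Since $F$ meets the kernel $E$ of $W \to K \otimes E^*$ trivially, $E \oplus F$ is a subsheaf of $W$, and I set $\Lambda := H^0(E) \oplus H^0(F) \subseteq H^0(W)$. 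A Riemann--Roch count gives $h^0(E) = nk_0 - 1$ and, for general $e_1,e_2$, $h^0(F) = nk_0 - 2$, so $\dim \Lambda = k$. By Theorem \ref{DeSing}(b), the locus $\SGkW$ is smooth of dimension $\dim \MS - \frac{1}{2}k(k+1)$ at $(W, \Lambda)$ as soon as the symmetric Petri map $\musL$ is injective; granting this, the descent argument of Lemma \ref{OneImpliesMore} produces a component of $\Snk$ that is generically smooth of the expected codimension $\frac{1}{2}k(k+1)$.

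It remains to prove $\musL$ injective for general choices. The inclusion $E \oplus F \subseteq W$ is an isomorphism over the dense open $U_0 := C \setminus \{x, y_1, y_2\}$, so $\Sym^2(E \oplus F) \hookrightarrow \Sym^2 W$ is injective on sections and $\musL$ factors through the symmetric Petri map of $E \oplus F$. Applying Lemma \ref{PetriInjSum} with $F_1 = E$ and $F_2 = F$, injectivity of $\musL$ follows from injectivity of the three maps
\[ \Sym^2 H^0(E) \to H^0(\Sym^2 E), \quad H^0(E) \otimes H^0(F) \to H^0(E \otimes F), \quad \Sym^2 H^0(F) \to H^0(\Sym^2 F). \]

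Each of these I would reduce to multiplication maps of line bundles over $U_0$. There $E \cong \bigoplus_i L_i$ and, using the count $h^0(K \otimes E^*) = h^1(E) = nk_0 = \sum_i h^0(KL_i^{-1})$ (so that $H^0(K \otimes E^*) = \bigoplus_i H^0(KL_i^{-1})$), also $F \cong \bigoplus_i KL_i^{-1}$. As sections of a torsion-free sheaf are determined by their restriction to $U_0$, each of the three maps above is injective provided the corresponding map for the split sheaves $\bigoplus_i L_i$ and $\bigoplus_i KL_i^{-1}$ is. A final application of Lemma \ref{PetriInjSum}, together with the compatibility of multiplication with direct sums, then reduces the problem to injectivity of the line-bundle multiplication maps
\[ \Sym^2 H^0(A) \to H^0(A^2) \quad\text{and}\quad H^0(A) \otimes H^0(B) \to H^0(AB), \]
where $A, B$ run over members of $\{ L_1, \ldots, L_n, KL_1^{-1}, \ldots, KL_n^{-1} \}$.

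The diagonal pairings $H^0(L_i) \otimes H^0(KL_i^{-1}) \to H^0(K)$ are precisely the classical Petri maps, injective because $C$ is Petri. Every remaining map is a multiplication map of two line bundles, each a general element of the positive-dimensional locus $B^{k_0}_{1, g-1}$ (note $L \mapsto KL^{-1}$ preserves this locus) with $h^0 = k_0$; since the chosen bundles are mutually non-isomorphic, each target has degree $2g - 2$ and general moduli, hence $h^0 = g - 1 \ge k_0^2$, so injectivity is at least numerically admissible. I expect the verification for these products of general line bundles to be the main obstacle, since it lies beyond the Gieseker--Petri theorem itself. I would settle it by the base-point-free pencil trick---which, for a bpf pencil in $H^0(A)$, identifies the kernel with $H^0(A^{-1}B)$, a general line bundle of degree $0$ and hence without sections (for the squares $A=B$ the one-dimensional kernel $H^0(\Oc)$ is antisymmetric and dies in $\Sym^2$)---promoted to the full space by induction on the number of sections, or alternatively by a degeneration to a suitable nodal curve as in the standard proof of Gieseker--Petri; the genericity of the $L_i$ ensures that the line bundles entering these products are sufficiently general and independent for such an argument.
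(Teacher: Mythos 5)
Your proposal reproduces the paper's overall strategy (the bundle $W$ of (\ref{defnW}), reduction to injectivity of a symmetric Petri map, decomposition through $E \oplus F$ and then through the line bundles $L_i$, $KL_j^{-1}$, Lemma \ref{PetriInjSum}), but the stated goal of "never computing $h^0(W)$ exactly" creates a genuine gap. First, the map $\musL$ appearing in Theorem \ref{DeSing}(b) is, by Definition \ref{DefnMusL}, defined on $\sym ( \Lambda \otimes H^0 ( W ) )$, not on $\Sym^2 \Lambda$; your factorisation through $\Sym^2 ( E \oplus F )$ only controls the restriction of $\mu$ to $\Sym^2 \Lambda$, so unless $\Lambda = H^0 ( W )$ you have not verified the hypothesis of Theorem \ref{DeSing}(b) at all. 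Second, and more seriously, even granting smoothness of $\SGkW$ at $(W, \Lambda)$, no conclusion about $\Snk$ follows: Lemma \ref{OneImpliesMore} takes as input a \emph{component of $\Snk$} already known to be generically smooth of expected codimension (equivalently, via Corollary \ref{SmoothCrit}, a point with $h^0 = k$ and injective $\mus$); its descent argument cannot be started from a smooth point of $\SGkW$. If $h^0 ( W ) > k$, then $W \in \Sn^{k+1} \subseteq \Sing ( \Snk )$ by Proposition \ref{SchemeStructureSkW}(c), and the component of $\SGkW$ through $(W, \Lambda)$ could a priori map with positive-dimensional fibres onto a superabundant component of $\Sn^{k+1}$; nothing in your argument excludes this. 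This is exactly why the paper does compute $h^0(W)$: it shows the coboundary $\cup \, \delta(W) \colon H^0 ( K \otimes E^* ) \to H^1 ( E )$ has rank precisely $2$, whence $H^0 ( W ) = H^0 ( E ) \oplus H^0 ( F )$ and $h^0 ( W ) = k$, and then Corollary \ref{SmoothCrit} applies directly. Since you compute $h^0(E)$ and $h^0(F)$ anyway, the missing piece is precisely this rank computation, which shows $H^0(W)$ is no larger than $\Lambda$.

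The second gap is in the injectivity of the line-bundle multiplication maps, which you correctly identify as the main analytic input but underestimate. For the cross terms with distinct factors, the paper proves injectivity on any Petri curve by globalising the Petri map over $U \times U \subseteq B^{k_0}_{1, g-1} \times B^{k_0}_{1,g-1}$ (twisting by a point $p$ to keep ranks constant): the pairs $(L, KL^{-1})$ are injective by the Petri condition, injectivity is an open condition, $B^{k_0}_{1,g-1}$ is irreducible, and one then deforms the $L_i$. Your base-point-free pencil trick handles only a pencil inside $H^0(A)$, and the "promotion to the full space by induction on the number of sections" is not routine and is not supplied. More importantly, for the squares $\Sym^2 H^0 ( L_i ) \to H^0 ( L_i^2 )$ no such deformation trick is available (one cannot move the pair $(L_i, L_i)$ into the Petri-diagonal while keeping it a square); this is precisely the content of \cite[Theorem 1]{Bal12}, a nontrivial theorem proven by degeneration, and it is the reason the theorem requires $C$ \emph{general} in moduli rather than merely Petri. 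Your sketch treats this step as an exercise in the pencil trick, and your antisymmetric-kernel observation only disposes of the pencil sub-case; as it stands this step is asserted, not proven.
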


\begin{remark} Note that the Petri assumption implies that $k_0 = \left\lfloor \sqrt{g - 1} \right\rfloor$. 
\end{remark}

\begin{proof}[Proof of Theorem \ref{GenSmoothExpDim}] Recall the $K$-valued symplectic bundle $W$ constructed in (\ref{defnW}), which by Proposition \ref{WStable} defines a point of $\cS_{2n, K}^{2nk_0 - 3}$. By Corollary \ref{SmoothCrit} and Lemma \ref{OneImpliesMore}, the statement will follow if we can show that $\mu \colon \Sym^2 H^0 ( W ) \to H^0 ( \Sym^2 W )$ is injective.

The following argument is modelled upon the proof of \cite[Lemma 7.2]{HHN}. Let $p \in C$ be a point which is not a base point for any $K L_i^{-1}$, so $h^0 ( L_i (p) ) = h^0 ( L_i )$ for $1 \le i \le n$. For each $i$, we have a commutative diagram
\[ \xymatrix{ H^0 ( L_i ) \otimes H^0 ( K L_i^{-1} ) \ar[d]^\wr \ar[r] & H^0 ( K ) \ar[d]^\wr \\
 H^0 ( L_i (p) ) \otimes H^0 ( K L_i^{-1} ) \ar[r] & H^0 ( K (p) ). } \]
Now let $U$ be the open subset of $B^{k_0}_{1, g-1}$ over which $h^0 ( L ) = h^0 ( L (p) ) = k_0$. (Note that since $C$ is Petri, $B^{k_0}_{1, g-1}$ is irreducible by \cite[Remark 4.2]{HHN}.) Let $\cA$ and $\cB$ be vector bundles over $U \times U$ whose fibres at $(L, N)$ are $H^0 ( L (p) ) \otimes H^0 ( K N^{-1} )$ and $H^0 ( K L N^{-1} (p) )$ respectively. These have rank $k_0^2$ and $g$ respectively. Let $\tilde{\mu} \colon \cA \to \cB$ be the globalised Petri map. Since $C$ is Petri, the composed map
\[ H^0 ( L_i ) \otimes H^0 ( K L_i^{-1} ) \ \to \ H^0 ( K ) \ \to \ H^0 ( K(p) ) \]
is injective for all $L_i$. Hence $\tilde{\mu}$ is injective on an open subset of $U \times U$. Deforming the $L_i$ if necessary, we may assume 
 that the multiplication maps
\begin{multline} H^0 ( L_i ) \otimes H^0 ( L_j ) \ \to \ H^0 ( L_i L_j ) \quad \hbox{and} \quad H^0 ( L_i ) \otimes H^0 ( K L_j^{-1} ) \ \to \ H^0 ( K L_i L_j^{-1} ) \\
\hbox{and} \quad H^0 ( K L_i^{-1} ) \otimes H^0 ( K L_j^{-1} ) \ \to H^0 ( K^2 L_i^{-1} L_j^{-1} ) \label{PIone} \end{multline}
are injective for all $i, j$.

Furthermore, as $C$ is now assumed general in moduli and the $L_i$ were chosen generally in the positive-dimensional locus $B^{k_0}_{1, g-1}$, by \cite[Theorem 1]{Bal12} the symmetric Petri maps
\begin{equation} \Sym^2 H^0 ( L_i ) \ \to \ H^0 ( L_i^2 ) \quad \hbox{and} \quad \Sym^2 H^0 ( K L_i^{-1} ) \ \to \ H^0 ( K^2 L_i^{-2} ) \label{PItwo} \end{equation}
are injective for all $i$.

Next, from the proof of Proposition \ref{WStable} we recall the subbundle $F \subset W$ lifting from the elementary transformation $F_{e_1 , e_2} \subset K \otimes E^*$. We claim that $H^0 ( W ) = H^0 ( E ) \oplus H^0 ( F )$. One direction is clear. For the rest; note that there is a commutative diagram
\[ \xymatrix{ H^0 ( K \otimes E^* ) \otimes \left(  K^{-1} \otimes E \otimes E(y_1 + y_2)|_{y_1 + y_2} \right) \ar[r]^-\varepsilon \ar[d] & E(y_1 + y_2)|_{y_1 + y_2} \ar[d] \\
H^0 ( K \otimes E^* ) \otimes H^1 ( K^{-1} \otimes E \otimes E ) \ar[r]^-\cup & H^1 ( E ) } \]
where the vertical arrows are induced by coboundary maps. Perturbing $e_1$ and $e_2$ if necessary, we may assume that $K \otimes E^*$ is generated at $y_1$ and $y_2$, and thus that $\dim \im ( \varepsilon ) = 2$. Then by commutativity and in view of Lemma \ref{AlternativePsi} (with $V = E$), the projectivised image of $\cup \delta (W)$ is spanned by the images of $e_1$ and $e_2$ in $\PP H^1 ( E ) = | \cO_{\PP E} (1) \otimes \pi^* K |^*$. Perturbing again if necessary, we may assume that these images span a $\PP^1$. We conclude that $\cup \delta(W)$ has rank 2, whence $h^0 ( W ) = 2k_0 - 3$ and $H^0 ( W ) = H^0 ( E ) \oplus H^0 ( F )$ as desired. As
\[ H^0 ( E ) \ \subset \ \bigoplus_i H^0 ( L_i ) \quad \hbox{and} \quad H^0 ( F ) \ \subset \ \bigoplus_j H^0 ( K L_j^{-1} ) , \]
by injectivity of the maps in (\ref{PIone}) and (\ref{PItwo}) and by Lemma \ref{PetriInjSum}, we obtain the injectivity of $\mu \colon \Sym^2 H^0 ( W ) \to H^0 ( \Sym^2 W )$. This completes the proof. \end{proof}

\begin{remark} Recall that the scheme $\Snk$ has expected dimension 
\[\beta^k_{2n, s}(K):=n(2n+1)(g-1)-\frac{1}{2}k(k+1).\]
In the case $2n=2$, Bertram and Feinberg conjectured in \cite{BF}, that if the expected number 
\[\beta^k_{2, s}(K)= 3g-3-\frac{1}{2}k(k+1)\]
is non-negative, then $\mathcal{S}^k_{2, K}=B^k(2, K)$ would be non-empty. They further predicted that on a general curve, $\mathcal{S}^k_{2, K}$ would be non-empty only if $\beta^k_{2, s}(K)\geq 0$. Mukai states this conjecture as a problem in \cite[Problem 4.11]{Muk92} and \cite[Problem 4.8]{Muk97}.

Montserrate Teixidor proves in \cite[Th. 1.1]{TiB07} that on a general curve if If $k = 2k_1$, then $\mathcal{S}^k_{2, K}$ is non-empty when $g \geq k^2_1$ if $k_1 > 2, g \geq 5$ if $k_1 = 2, g \geq 3$ if $k_1 = 1$. Moreover, under these conditions, it has a component of the right dimension $\rho^k_K$. As for the case $k = 2k_1+1$ she proves that $\mathcal{S}^k_{2, K}$ is non-empty when $g \geq (k_1)^2+k_1+1$.
Moreover, under these conditions, it has a component of the right dimension. 

H. Lange, P. E. Newstead and Seong Suk Park proved in \cite{LNS} that if $C$ is a general curve of odd prime genus $g$ and if $g-1\geq \max\{2k-1, \frac{k(k-1)}{4}\}$, then $\mathcal{S}^k_{2, K}$ would be non-empty.

Our results, Theorems \ref{SnkNonempty} and \ref{GenSmoothExpDim} push forward the Bertram--Feinberg--Mukai conjecture and encounter it when $n\geq 2$. Our Theorem \ref{SnkNonempty} doesn't need any generic condition. Furthermore, while the existing mentioned results carry no results for smoothness of $\mathcal{S}^k_{2, K}$, Theorem \ref{GenSmoothExpDim} verifies the generic smoothness of an appropriately constructed component of $\Snk$. An optimized bound for $k$ in theorems \ref{SnkNonempty} and \ref{GenSmoothExpDim} needs independent studies. 
\end{remark}
\section{Superabundant components of Brill--Noether loci} \label{Superabundance}

The usual Brill--Noether locus $B^k_{r, d}$ has expected dimension
\[ \beta^k_{r, d} \ = \ \dim \cU (r, d) ) - k ( k - d + r(g-1)) . \]
 As outlined in the introduction, examples of components of excess dimension are relevant both to Brill--Noether theory and the study of determinantal varieties. Building on the observation \cite[{\S} 9]{New11} that $B^k_{2, K}$ can have larger expected dimension than the locus $B^k_{2, 2g-2}$ containing it, we shall now show for infinitely many $n$ and $g$ the existence of superabundant components of $B^k_{2n, 2n(g-1)}$ for a general curve of genus $g$.

The expected dimension of $\Snk$ exceeds that of $B^k_{2n, 2n(g-1)}$ if and only if
\[ \dim \MS - \frac{1}{2}k(k+1) \ > \ \dim \Utn - k ( k - d + r(g-1)) , \]
which is equivalent to
\begin{equation} \label{kCriticalValue} \frac{1}{2}k(k-1) \ > \ n(2n - 1) ( g-1 ) + 1 . \end{equation}

Thus if $\Snk$ is nonempty for a value of $k$ satisfying this inequality, there exists a superabundant component of $B^k_{2n, 2n(g-1)}$. We shall give examples using Theorem \ref{SnkNonempty}. Firstly, for certain values of $g$, one can obtain statements for all $n$.

\begin{theorem} \label{OversizedBNlocus1} Suppose $m \ge 7$ and let $C$ be a general curve of genus $g = m^2 + 1$. Then for any $n \ge 1$, the locus $\Sn^{2nm - 3}$ is nonempty and has dimension greater than $\beta^{2nm-3}_{2n, 2n(g-1)}$. In particular $B^{2nm-3}_{2n, 2n(g-1)}$ has a superabundant component. \end{theorem}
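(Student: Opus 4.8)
The plan is to specialise Theorem \ref{SnkNonempty} and the numerical criterion (\ref{kCriticalValue}) to the values $k = 2nm - 3$ and $g - 1 = m^2$. First I would pin down the relevant value of $k_0$. Since $g - 1 = m^2$, the expected dimension of $B^m_{1, g-1}$ is $\beta^m_{1, g-1} = g - m^2 = 1 > 0$; hence by the Brill--Noether existence theorem $B^m_{1, g-1}$ is nonempty of dimension at least $1$ on any curve, so $k_0 \ge m$. (For a general Petri curve one has in fact $k_0 = \lfloor \sqrt{g-1} \rfloor = m$ exactly, so $k = 2nm - 3$ sits precisely at the top value $2nk_0 - 3$ of the range covered by Theorem \ref{SnkNonempty}; this is the tight point of the argument.)

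Set $k := 2nm - 3$. Since $m \ge 7$ and $n \ge 1$ we have $k \ge 0$, and since $k_0 \ge m$ we have $k \le 2nk_0 - 3$. Thus Theorem \ref{SnkNonempty} yields a nonempty component $X$ of $\Snk$ of codimension at most $\frac{1}{2}k(k+1)$ in $\MS$; equivalently, $\dim X$ is at least the expected dimension $n(2n+1)(g-1) - \frac{1}{2}k(k+1)$ of $\Snk$. Next I would verify (\ref{kCriticalValue}): substituting $k = 2nm - 3$ and $g - 1 = m^2$, a direct computation gives
\[ \frac{1}{2}k(k-1) - \bigl( n(2n-1)(g-1) + 1 \bigr) \ = \ nm(m-7) + 5 , \]
which is strictly positive for all $m \ge 7$ and $n \ge 1$. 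By the discussion preceding (\ref{kCriticalValue}), the expected dimension of $\Snk$ therefore strictly exceeds $\beta^{k}_{2n, 2n(g-1)}$.

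Combining these, $\dim X \ge n(2n+1)(g-1) - \frac{1}{2}k(k+1) > \beta^{k}_{2n, 2n(g-1)}$. As $\Snk \subseteq B^k_{2n, K^n} \subseteq B^k_{2n, 2n(g-1)}$, the component $X$ lies in some component $Z$ of $B^k_{2n, 2n(g-1)}$, and $\dim Z \ge \dim X > \beta^{k}_{2n, 2n(g-1)}$, so $Z$ is superabundant, as required. I expect no serious obstacle beyond the observation that $k$ equals the boundary value $2nk_0 - 3$: the whole argument rests on the nonemptiness input $k_0 \ge m$ together with the elementary inequality $nm(m-7) + 5 > 0$.
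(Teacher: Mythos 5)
Your proposal is correct and follows essentially the same route as the paper: apply Theorem \ref{SnkNonempty} with $k = 2nm - 3$ and then verify the inequality (\ref{kCriticalValue}), which reduces by the same algebra to $nm(m-7) + 5 > 0$ for $m \ge 7$, $n \ge 1$. Your additional observation that Brill--Noether existence gives $k_0 \ge m$ on \emph{any} curve (where the paper's proof simply invokes $k_0 = \lfloor \sqrt{g-1} \rfloor = m$ for a general curve) is a small refinement consistent with the ``any curve'' phrasing of Theorem \ref{MainC}(a) in the introduction, but does not change the substance of the argument.
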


\begin{proof} As $C$ is general, $k_0 = \left\lfloor \sqrt{g-1} \right\rfloor = m$ and the bundle $W$ defined in (\ref{defnW}) defines a point of $\Sn^{2nm - 3}$. For $k = 2nm - 3$, the inequality (\ref{kCriticalValue}) becomes
\[ \frac{(2 n m - 3)(2n m - 4)}{2} \ > \ n(2n-1)m^2 + 1 . \]
The $n^2$-terms cancel,
and the inequality reduces to $n m^2 - 7 n m + 5 \ > \ 0$. One checks easily that this holds for all $n \ge 1$ when $m \ge 7$.
\end{proof}

With the same approach, if we fix $n$, then we can obtain a statement for a general curve of large enough genus. 

\begin{theorem} \label{OversizedBNlocus2} Fix $n \ge 1$ and let $C$ be any curve of genus $g \ge (4n+7)^2 + 1$. Then $\Sn^{2nk_0 -3}$ is nonempty and has dimension greater than $\beta^k_{2n, 2n(g-1)}$. In particular, for fixed $n \ge 1$, there are infinitely many $g$ such that $B^k_{2n, 2n(g-1)}$ has a superabundant component for any curve $C$ of genus $g$. \end{theorem}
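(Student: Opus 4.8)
The plan is to follow the template of Theorem \ref{OversizedBNlocus1}, the essential new features being that the genus $g$ is no longer tied to $n$ and that the genericity hypothesis is dropped. Nonemptiness is immediate: for any curve of genus $g \ge 3$, Theorem \ref{SnkNonempty} already exhibits a nonempty component of $\Sn^{2nk_0 - 3}$, where $k_0$ is the invariant of the given curve $C$. By Proposition \ref{SchemeStructureSkW} (b) each component has codimension at most $\frac{1}{2}k(k+1)$ in $\MS$, and hence dimension at least $n(2n+1)(g-1) - \frac{1}{2}k(k+1)$, the expected dimension of $\Snk$ for $k = 2nk_0 - 3$.

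It then remains to verify the superabundance inequality (\ref{kCriticalValue}), that is $\frac{1}{2}k(k-1) > n(2n-1)(g-1) + 1$ with $k = 2nk_0 - 3$, since this is exactly the condition that the expected dimension of $\Snk$ exceeds $\beta^k_{2n, 2n(g-1)}$. The observation that makes genericity unnecessary is that this inequality is favourable in $k_0$: its left-hand side is increasing in $k_0$, while its right-hand side is independent of $k_0$. Thus it suffices to treat the smallest admissible value of $k_0$. By the dimension and existence theorems of classical Brill--Noether theory (see \cite{ACGH}), which hold on an arbitrary curve, $\dim B^k_{1, g-1} \ge \beta^k_{1, g-1} = g - k^2$ whenever this locus is nonempty; so with $m := \lfloor \sqrt{g-1} \rfloor$ one has $\dim B^m_{1, g-1} \ge g - m^2 \ge 1$ and therefore $k_0 \ge m$.

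I would then carry out the numerical estimate. Using $k \ge 2nm - 3$ (from $k_0 \ge m$) together with $g - 1 \le m^2 + 2m$ (from $m = \lfloor \sqrt{g-1} \rfloor$), the inequality (\ref{kCriticalValue}) is implied by
\[ \frac{1}{2}(2nm - 3)(2nm - 4) \ > \ n(2n-1)(m^2 + 2m) + 1 . \]
On expanding both sides the leading terms $2n^2 m^2$ cancel, and after a short computation this reduces to $nm(m - 4n - 5) + 5 > 0$. The hypothesis $g \ge (4n+7)^2 + 1$ forces $m \ge 4n + 7$, so that $m - 4n - 5 \ge 2$ and the inequality holds with room to spare; in fact $m \ge 4n + 5$ already suffices, so the stated bound on $g$ is comfortably sufficient but not sharp.

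Finally, since $\Snk \subseteq B^k_{2n, K^n} \subseteq B^k_{2n, 2n(g-1)}$, the component produced above lies in some component $Z$ of $B^k_{2n, 2n(g-1)}$; as $\dim Z$ is at least the dimension of this component, which exceeds $\beta^k_{2n, 2n(g-1)}$, the component $Z$ is superabundant. The statement about infinitely many $g$ follows at once, every $g \ge (4n+7)^2 + 1$ being admissible. I do not expect a genuine obstacle here: the only point needing care is the combination of the monotonicity-in-$k_0$ remark with the uniform bound $k_0 \ge \lfloor \sqrt{g-1} \rfloor$, which is precisely what allows the conclusion for an arbitrary, rather than general, curve.
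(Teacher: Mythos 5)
Your proposal is correct, and although its skeleton coincides with the paper's (nonemptiness from Theorem \ref{SnkNonempty} plus the codimension bound of Proposition \ref{SchemeStructureSkW} (b), followed by a verification of the numerical criterion (\ref{kCriticalValue})), your treatment of the numerical step is genuinely different and in fact more robust. The paper's proof takes $k_0 = \left\lfloor \sqrt{g-1} \right\rfloor$ and exploits the resulting \emph{two-sided} bound $\sqrt{g} - 1 \le k_0 \le \sqrt{g-1}$ (its inequality (\ref{kzeroIneq})): the lower bound gives $k_0^2 + 2k_0 \ge g-1$, and the upper bound is then essential to reduce (\ref{kCriticalValueAgain}) to $\sqrt{g-1}\left(1 + \frac{5}{n(g-1)}\right) > 4n+7$. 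You use only the one-sided bound $k_0 \ge m := \left\lfloor \sqrt{g-1} \right\rfloor$ — valid on \emph{every} curve by the classical existence and dimension theorems — combined with the observation that the left side of (\ref{kCriticalValue}) is increasing in $k_0$ while the right side does not involve $k_0$, and then substitute $g - 1 \le m^2 + 2m$ to arrive at the polynomial inequality $nm(m - 4n - 5) + 5 > 0$. This buys something concrete: with the definition (\ref{Defnko}) of $k_0$, which is the definition the theorem statement cites, a special curve can have $k_0$ strictly larger than $\left\lfloor \sqrt{g-1} \right\rfloor$ (on a hyperelliptic curve $k_0$ grows like $g/2$), and then the upper bound in (\ref{kzeroIneq}) fails; the paper's computation is literally valid only for Petri curves, or under the alternative numerical definition $k_0 = \max\{ k \ge 0 : \beta^k_{1, g-1} > 0 \}$ adopted in {\S} \ref{MercatBundles}. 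Your monotonicity argument is insensitive to which definition is used, so it genuinely delivers the ``any curve'' wording of the statement. As a minor bonus, your computation shows the hypothesis can be relaxed to $g \ge (4n+5)^2 + 1$, confirming that the stated bound is not sharp.
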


\begin{proof} Let $W$ be as above. As $k_0 = \left\lfloor \sqrt{g-1} \right\rfloor$, we have $k_0^2 \le g-1$ but $(k_0 + 1)^2 \ge g$, so 
\begin{equation} \sqrt{g} - 1 \ \le \ k_0 \ \le \ \sqrt{g-1} . \label{kzeroIneq} \end{equation}
Now let us check inequality (\ref{kCriticalValue}) for $k = h^0 ( W) = 2nk_0 - 3$; explicitly, that
\[ \frac{(2nk_0 - 3)(2nk_0 - 4)}{2} \ > \ n(2n-1)(g-1) + 1 , \]
that is,
\begin{equation} 2 n^2 k_0^2 - 7 n k_0 + 5 \ > \ 2n^2 (g-1) - n(g-1) . \label{kCriticalValueAgain} \end{equation}
Rewriting the left side as $2 n^2 ( k_0^2 + 2k_0 ) - 4n^2 k_0 - 7 n k_0 + 6$ and noting that $k_0^2 + 2k_0 \ge g-1$ by the left hand inequality in (\ref{kzeroIneq}), we see that (\ref{kCriticalValueAgain}) would follow from the inequality
\[ - 4n^2 k_0 - 7 n k_0 + 5 \ > \ - n ( g-1 ) , \]
that is, $(g-1) + \frac{5}{n} \ > k_0 ( 4n + 7 )$. As $k_0 \le \sqrt{g-1}$ by (\ref{kzeroIneq}), this would follow from
\[ \sqrt{g-1} \left( 1 + \frac{5}{n ( g-1 )} \right) \ > \ 4n + 7 . \]
This follows from the hypothesis $g \ge (4n+7)^2 + 1$. \end{proof}

Setting $n = 1$, the above theorem shows in particular:

\begin{corollary} For any curve of genus $g \ge 122$, there exist Brill--Noether loci with superabundant components. \end{corollary}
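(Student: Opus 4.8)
The plan is to obtain this statement as the special case $n = 1$ of Theorem \ref{OversizedBNlocus2}. That theorem asserts, for each fixed $n \ge 1$, that every curve of genus $g \ge (4n+7)^2 + 1$ carries a superabundant component of $B^{2nk_0 - 3}_{2n, 2n(g-1)}$. Substituting $n = 1$, the genus threshold becomes $(4 \cdot 1 + 7)^2 + 1 = 11^2 + 1 = 122$, which is precisely the bound claimed here. So the first and essentially only step is to verify that $n = 1$ satisfies the hypotheses of Theorem \ref{OversizedBNlocus2} and to read off the resulting threshold.

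Concretely, I would proceed as follows. For $2n = 2$ the $K$-valued symplectic bundles are exactly the rank two bundles of canonical determinant, so that $\cS_{2, K}^{k} = B^k_{2, K}$, with ambient moduli space $\cM\cS(2, K)$ smooth and irreducible of dimension $3(g-1)$ by Lemma \ref{moduli}. Setting $k := 2k_0 - 3$ with $k_0 = \lfloor \sqrt{g-1} \rfloor$, Theorem \ref{OversizedBNlocus2} with $n=1$ provides, for every curve of genus $g \ge 122$, a nonempty locus $\cS_{2, K}^{2k_0 - 3}$ whose dimension strictly exceeds $\beta^{2k_0 - 3}_{2, 2(g-1)}$. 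The excess is then transferred to the ambient Brill--Noether locus: since $\cS_{2, K}^{k} \subseteq B^k_{2, 2(g-1)}$, any component of $\cS_{2, K}^{k}$ of dimension greater than $\beta^k_{2, 2(g-1)}$ lies inside some component of $B^k_{2, 2(g-1)}$, which must therefore also have dimension exceeding the expected value $\beta^k_{2, 2(g-1)}$; that is, it is superabundant in the sense of \cite{CFK18}.

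I do not expect any genuine obstacle here. The entire substance of the result, namely the nonemptiness (Theorem \ref{SnkNonempty}) together with the dimension-count inequalities (\ref{kCriticalValue}) and (\ref{kzeroIneq}), has already been established in the proof of Theorem \ref{OversizedBNlocus2}. The only thing left to check is the arithmetic specialization, which reduces to confirming that $\sqrt{g-1}\,(1 + \tfrac{5}{g-1}) > 11$ holds once $g \ge 122$; this is immediate since $\sqrt{121} = 11$ and the correction factor is strictly greater than one. Thus the proof is a one-line specialization of the preceding theorem, with the plural phrasing of the statement reflecting that the conclusion applies uniformly to every curve of genus at least $122$.
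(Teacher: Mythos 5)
Your proposal is correct and is exactly the paper's own argument: the corollary is obtained by setting $n = 1$ in Theorem \ref{OversizedBNlocus2}, whence the genus threshold $(4 \cdot 1 + 7)^2 + 1 = 122$. Your additional verifications (the identification $\cS_{2,K}^{k} = B^k_{2,K}$, the transfer of excess dimension to a component of $B^k_{2,2(g-1)}$, and the arithmetic check $\sqrt{g-1}\,(1 + \tfrac{5}{g-1}) > 11$ for $g \ge 122$) are all sound and merely make explicit what the paper leaves implicit.
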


\begin{remark} The bundle $W$ is not a smooth point of the component of $B^{2nk_0-3}_{2n, 2n(g-1)}$. The usual Petri map is identified with the multiplication $H^0 ( W ) \otimes H^0 ( W ) \to H^0 ( W \otimes W )$. Since $W$ has at least one line subbundle $L_1$ with at least two independent sections, the restriction of this map to $\wedge^2 H^0 ( W )$ has nonzero kernel containing $\wedge^2 H^0 ( L_1 )$. Note moreover that we have only shown that $\Sn^{2nk_0 - 3}$ is contained in a superabundant component of $B^{2nk_0 - 3}_{2n, 2n(g-1)}$; this component could in general have even larger dimension. \end{remark}

\begin{remark} In \cite{CFK18}, the authors show that in rank two for a general $\nu$-gonal curve, the superabundant components of $B^k_{2, d}$ are all of first type (cf.\ Definition \ref{TwoTypes}). However, $W$ is generically generated, since $E$ is generically generated and the subspace $H^0 ( F )$ lifting from $H^0 ( K \otimes E^* )$ generically generates $F$. 
This is another aspect in which the higher rank case differs from the rank two case. \end{remark}

\subsection{Superabundant components of moduli of coherent systems} \label{SuperCohSys}

Coherent systems on $C$ were briefly mentioned in {\S} \ref{sectionDeSing}. We recall now some more facts, referring the reader to \cite{Brad} for more information and references; and to \cite{BGPMN} for the connection to Brill--Noether theory. For a coherent system $(W, \Lambda)$ of type $(r, d, k)$ on $C$ and a real number $\alpha$, recall that the \textsl{$\alpha$-slope} of $(W, \Lambda)$ is defined to be the real number
$$\mu_\alpha(W,\Lambda) := \frac{d}{r}+\alpha\frac{k}{r}.$$
The coherent system $(W, \Lambda)$ is called \textsl{$\alpha$-stable} 
 if for any coherent subsystem $(V, \Pi)$ of $(W, \Lambda)$ one has $\mu_\alpha(V, \Pi) < \mu_\alpha(W, \Lambda)$.  
 For any real number $\alpha > 0$, there exists a moduli space $G(\alpha; r, d, k)$ parametrising $\alpha$-stable coherent systems, which has expected dimension
\[ \beta^k_{r, d} \ = \ r^2 ( g-1 ) + 1 - k ( k - d + r (g-1) ) . \]
Furthermore there is an increasing finite sequence of real numbers $0 = \alpha_0 , \alpha_1 , \alpha_2 , \ldots , \alpha_L$ with the property that if $\alpha$ and $\alpha'$ belong to the open interval $(\alpha_i , \alpha_{i+1})$ then $G(\alpha; r, d, k) \cong G(\alpha'; r, d, k)$. The numbers $\alpha_i$ are called \textsl{critical values} for the type $(r, d, k)$.

For any $L \in \Pic^d (C)$, we may also consider the closed sublocus
\[ \GaL \ := \ \{ ( W, \Lambda ) \in G ( \alpha ; r, d, k ) : \det W \cong L \} . \]
It is clear that every component of $\GaL$ has dimension at least $\beta^k_{r, d} - g$. However, in \cite{GN}, the authors show that in several cases this is not sharp, and conjecture in \cite[{\S} 2]{GN} that every component of $G( \alpha; r, L , k )$ has dimension at least
\begin{equation} \beta^k_{r, d} - g + \binom{k}{2} \cdot h^1 ( L ) \ =: \ \gamma^k_{r, L} . \label{ExpDimGaL} \end{equation}

\noindent We have the following result on superabundant components of moduli of coherent systems.

\begin{theorem} \label{OversizedCoherentSystems} Let $C$ be a general curve of genus $g \ge 3$. As before, write $k_0 = \left\lfloor \sqrt{g-1} \right\rfloor$, and $W$ be the $K$-valued symplectic bundle constructed in (\ref{defnW}). Set $k = 2nk_0 - 3$. Let $\alpha_1$ be the smallest positive critical value for the type $(2n, 2n(g-1), k)$, and suppose $0 < \alpha < \alpha_1$.
\begin{enumerate}
\item[(a)] The coherent system $( W, H^0 ( W ) )$ is $\alpha$-stable.
\item[(b)] The fixed determinant locus $G (\alpha; 2n, K^n , k )$, and hence also the full moduli space $G ( \alpha; 2n, 2n(g-1), k )$, contains a component of dimension at least
\[ n(2n+1)(g-1) - \frac{1}{2}k(k+1) . \]
\item[(c)] Suppose $m \ge 7$ and $g = m^2 + 1$, so $k = 2nm - 3$. Then for any $n \ge 1$, the component of $G(\alpha; 2n, 2n(g-1) , 2nm - 3)$ referred to in (b) is superabundant. Moreover, $G ( \alpha; 2n, K^n, 2nm - 3)$ has a component of dimension larger than $\gamma^{2nm-3}_{2n, K^n} + g$ (cf. (\ref{ExpDimGaL})).
\item[(d)] Fix $n \ge 1$ and $g \ge (4n+7)^2 + 1$. Then the component of $G(\alpha; 2n, 2n(g-1) , 2nk_0 - \nolinebreak 3)$ referred to in (b) is superabundant. Moreover, $G ( \alpha; 2n, K^n, 2nk_0 - 3)$ has a component of dimension larger than $\gamma^{2nk_0 - 3}_{2n, K^n} + g$.
\end{enumerate} \end{theorem}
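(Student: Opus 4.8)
The plan is to transport the dimension count and superabundance inequalities established for $\Snk$ in \S\ref{Superabundance} into the coherent systems setting via the tautological assignment $W \mapsto (W, H^0(W))$. For part (a), I would deduce the $\alpha$-stability of $(W, H^0(W))$ for $0 < \alpha < \alpha_1$ purely from the stability of $W$ as a vector bundle, which holds by Proposition \ref{WStable}. Given a proper coherent subsystem $(V, \Pi)$ with $\rank V = r'$, $\deg V = d'$ and $\dim \Pi = k'$, the case $V = W$ reduces at once to $k' < k$, while if $V \subsetneq W$ the stability of $W$ yields a slope gap $\frac{d}{2n} - \frac{d'}{r'}$ bounded below by a positive constant. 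As $k'$ is bounded by Clifford's theorem and only finitely many pairs $(r', d')$ occur, the correction $\alpha ( \frac{k'}{r'} - \frac{k}{2n} )$ cannot reverse the resulting inequality of $\alpha$-slopes once $\alpha$ lies below the first critical value $\alpha_1$. This is the standard fact that below $\alpha_1$, $\alpha$-stability of the full coherent system is equivalent to stability of its underlying bundle.

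For part (b), I would invoke Theorem \ref{GenSmoothExpDim} (or Theorem \ref{SnkNonempty} for the dimension bound alone) to produce a component $X \subseteq \Snk$ of dimension at least $n(2n+1)(g-1) - \frac{1}{2}k(k+1)$ containing the bundle $W$ of \S\ref{Construction}, which is stable with $h^0(W) = k$; as $W \notin \Sn^{k+1}$, the generic member of $X$ also has $h^0 = k$. By Lemma \ref{moduli} the space $\MS$ embeds into the bundle moduli space, and every $K$-valued symplectic bundle has determinant $K^n$; thus over the dense open locus of $X$ where $h^0 = k$, the assignment $W \mapsto (W, H^0(W))$ defines a morphism into the fixed-determinant coherent system space $\GaK$, landing in the $\alpha$-stable locus by part (a). This morphism is injective, since a coherent system determines its underlying bundle, and so has image of the same dimension as $X$. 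Hence a component of $\GaK$ containing the image has dimension at least $n(2n+1)(g-1) - \frac{1}{2}k(k+1)$, and the statement for the full moduli space follows from the inclusion $\GaK \subseteq G(\alpha; 2n, 2n(g-1), k)$.

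Parts (c) and (d) then follow by combining (b) with the two numerical checks of \S\ref{Superabundance}. The component from (b) has dimension at least the expected dimension $n(2n+1)(g-1) - \frac{1}{2}k(k+1)$ of $\Snk$, which by (\ref{kCriticalValue}) strictly exceeds $\beta^k_{2n, 2n(g-1)}$ exactly when $\frac{1}{2}k(k-1) > n(2n-1)(g-1) + 1$; this was verified for $m \ge 7$, $g = m^2 + 1$, $k = 2nm - 3$ in Theorem \ref{OversizedBNlocus1}, and for $g \ge (4n+7)^2 + 1$, $k = 2nk_0 - 3$ in Theorem \ref{OversizedBNlocus2}. As $\beta^k_{2n, 2n(g-1)}$ is the expected dimension of $G(\alpha; 2n, 2n(g-1), k)$, the component is superabundant in both cases. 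For the fixed-determinant statement I would observe that $h^1(K^n) = h^0(K^{1-n}) = 0$ for $n \ge 2$, so that (\ref{ExpDimGaL}) gives $\gamma^k_{2n, K^n} + g = \beta^k_{2n, 2n(g-1)}$; the same inequality then shows that the component of $\GaK$ from (b) exceeds $\gamma^k_{2n, K^n} + g$.

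The step I expect to be the main obstacle is making the comparison in part (b) rigorous: one must check that the tautological assignment is a genuine morphism into $\GaK$ with injective behaviour, which relies on part (a) (so the image is $\alpha$-stable), on the embedding of $\MS$ into the bundle moduli, and on the uniqueness up to scalar of the $K$-valued symplectic form on a stable bundle of slope $g-1$ recorded in the proof of Lemma \ref{Poincare}. A subtlety worth flagging is that the fixed-determinant inequalities in (c) and (d) really require $n \ge 2$: for $n = 1$ one has $h^1(K) = 1$, and a direct computation shows that the bound from (b) equals $\gamma^k_{2, K}$, matching rather than exceeding the conjectural lower bound, so that the threshold $\gamma^k_{2, K} + g$ is not surpassed.
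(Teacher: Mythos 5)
Your proposal is correct and follows essentially the same route as the paper's proof: part (a) comes from stability of $W$ (Proposition \ref{WStable}) together with constancy of the stability condition on the chamber $(0, \alpha_1)$; part (b) from the generically injective assignment $W' \mapsto (W', H^0(W'))$ on the component $X \subseteq \Snk$ through $W$ (using that every component of $\Snk$ has codimension at most $\frac{1}{2}k(k+1)$ and that $h^0(W)=k$ exactly on the general curve), whose image lies in $\GaK$ because every $K$-valued symplectic bundle has determinant $K^n$; and parts (c), (d) from the numerical estimates in Theorems \ref{OversizedBNlocus1} and \ref{OversizedBNlocus2}.

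Your closing caveat, however, is a genuine catch rather than a mere subtlety, and is worth recording: for $n = 1$ one has $h^1(K) = 1$, hence
\[ \gamma^k_{2, K} + g \ = \ \beta^k_{2, 2g-2} + \binom{k}{2} , \]
whereas the dimension bound furnished by (b) is
\[ 3(g-1) - \frac{1}{2}k(k+1) \ = \ \gamma^k_{2, K} , \]
which falls short of $\gamma^k_{2, K} + g$ by exactly $g$. The paper's proof of (c) and (d) is a direct appeal to the computations of Theorems \ref{OversizedBNlocus1} and \ref{OversizedBNlocus2}, which amount to the inequality $n(2n+1)(g-1) - \frac{1}{2}k(k+1) > \beta^k_{2n, 2n(g-1)}$; this justifies the fixed-determinant ``Moreover'' clauses only when $\gamma^k_{2n, K^n} + g = \beta^k_{2n, 2n(g-1)}$, i.e.\ only for $n \ge 2$, where $h^1(K^n) = h^0(K^{1-n}) = 0$. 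Since the statement asserts these clauses for all $n \ge 1$, your restriction to $n \ge 2$ reflects what the argument actually proves: for $n = 1$ the method yields only a component of dimension at least the conjectured lower bound $\gamma^k_{2, K}$ of \cite{GN}, not one exceeding $\gamma^k_{2, K} + g$. In other words, the gap you flagged is present in the paper's own proof, not just in your reconstruction of it.
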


\begin{proof} (a) (See also \cite{KN}.) By Proposition \ref{WStable}, the bundle $W$ is stable. In particular, if $V$ is a proper subbundle of rank $r$, then $\mu ( V ) \le \mu (W) - \frac{1}{2nr}$. It is then easy to check that the coherent system $(W, H^0 ( W ) )$ is $\alpha$-stable for $0 < \alpha < \frac{1}{2nk}$.
 Since $G(\alpha; r, d, k) \cong G(\alpha'; r, d, k)$ for any $\alpha, \alpha'$ in the interval $(0 , \alpha_1 )$, the coherent system $( W, H^0 (W) )$ is $\alpha$-stable for $0 < \alpha < \alpha_1$.

(b) Denote by $X$ the component of $\Snk$ containing $W$. By part (a), for generic $W' \in X$ the coherent system $(W', H^0 (W') )$ is $\alpha$-stable, so there is a map
\[ X \ \dashrightarrow \ G ( \alpha ; 2n, 2n(g-1) , 2nk_0 - 3 ) \]
given by $W' \mapsto ( W', H^0 ( W' ))$. Clearly this is generically injective. In particular, the moduli space $G ( \alpha; 2n, 2n(g-1), 2nk_0 - 3 )$ has a component of dimension at least $n(2n+1)(g-1) - \frac{1}{2}k(k+1)$. Moreover, as any $K$-valued symplectic bundle has determinant $K^n$, the image of $X$ is contained in the fixed determinant locus $\GaK$.

Finally, as $G ( \alpha ; 2n, 2n(g-1), k )$ has the same expected dimension as $B^k_{2n, 2n(g-1)}$, parts (c) and (d) follow from the computations in the proofs of Theorems \ref{OversizedBNlocus1} and \ref{OversizedBNlocus2}. \end{proof}

\end{document}